\theoremstyle{plain}
\newtheorem{theorem}{Theorem}[section]
\newtheorem{remark}[theorem]{Remark}
\newtheorem{corollary}[theorem]{Corollary}
\newtheorem{definition}[theorem]{Definition}
\newtheorem{lemma}[theorem]{Lemma}
\newtheorem{proposition}[theorem]{Proposition}
\newtheorem{question}{Question}
\newcommand{\C}{\mathbb{C}}
\newcommand{\N}{\mathbb{N}}
\newcommand{\R}{\mathbb{R}}
\DeclarePairedDelimiter\abs{\lvert}{\rvert}
\renewcommand{\epsilon}{\ensuremath\varepsilon}
\newcommand{\eps}{\epsilon}
\renewcommand{\phi}{\ensuremath{\varphi}}
\newcommand{\U}[3]{\tilde{U}_{#2}^{#1}(#3)}
\renewcommand{\u}[3]{U_{#2}^{#1}(#3)}
\newcommand{\Oh}[3]{\tilde{O}_{#2}^{#1}(#3)}
\newcommand{\oh}[3]{O_{#2}^{#1}(#3)}
\newcommand{\vh}[3]{V_{#2}^{#1}(#3)}
\newcommand{\T}[1]{ \delta_{T}^{#1}\Gamma}
\newcommand{\stab}[1]{ \delta_{f}^{#1}\Gamma}
\newcommand{\co}[1]{\delta_\ast^{#1}\Gamma}
\newcommand{\cm}{\co{}}
\renewcommand{\P}[2]{{#2}^{(#1)}}
\newcommand{\G}[2]{\mathcal{G}_e^{#1}{#2}}
\newcommand{\GG}[2]{\tilde{\mathcal{G}}_e^{#1}{#2}}
\newcommand{\M}[2]{\partial_\ast^{#1} #2}
\newcommand{\m}[1]{\partial_\ast #1}
\renewcommand{\C}[1]{[#1]}
\newcommand{\e}{\ensuremath{e}}
\newcommand{\into}{\hookrightarrow}
\author{Stefanie Zbinden}
\title{Morse boundaries of graphs of groups with finite edge groups}
\begin{document}

\maketitle

\begin{abstract}
In this paper we prove that the Morse boundary of a free product depends only on the Morse boundary of its factors. In fact, we also prove the analogous result for graphs of groups with finite edge groups and infinitely many ends. This generalizes earlier work of Martin and \'Swi\k{a}tkowski in the case of non-hyperbolic groups.
\end{abstract}

\section{Introduction}

The Morse boundary has been introduced in \cite{Cor16} building on \cite{Charney_2014} and is a generalisation of the Gromov boundary for non-hyperbolic spaces. It captures hyperbolic properties of non-hyperbolic spaces and groups and, similar to the Gromov boundary, is a quasi-isometry invariant and visibility space. 

Morse boundaries inherit many properties of the Gromov boundary (see e.g. \cite{CH17}, \cite{Charney_2019}, \cite{Murray_2019}, \cite{liu_dynamics}, \cite{Zalloum2018ASC}) and \cite{charney2020complete} gives a topological description of certain Morse boundaries. While in this paper we work with the topology introduced in \cite{Cor16}, Cashen-Mackay introduced a different topology of the Morse boundary in \cite{CM19}, which most notably makes the Morse boundary metrizable. A summary of properties of the Morse boundary can be found in \cite{cordes2017survey}. For more information about hyperbolic spaces, we recommend \cite{hyperbolic_spaces} and for information about Cayley graphs and free products we recommend \cite{serre2012trees} although we will not follow the notation used there. 

Motivated by Dunwoody's accessibility \cite{Dunwoody1985}, that is, the fact that a finitely presented group $G$ splits as a graph of groups with finite edge groups and one-ended vertex groups, we show that the Morse boundary of $G$ only depends on the Morse boundaries of the vertex groups. Thus, this paper reduces the study of Morse boundaries (at least of finitely presented groups) to that of one-ended groups. However, our main result below also applies to non-finitely-presented groups, and when vertex groups are not one-ended:

\begin{theorem}\label{thm:graph_of_groups} For $i=1, 2$, let $\mathcal{G}_i$ be graphs of groups where all edge groups are finite and suppose that the groups $\pi_1\mathcal{G}_i$ have infinitely many ends. Denote by $h(\mathcal{G}_i)$ the set of homeomorphism types of Morse boundaries of the vertex groups in $\mathcal{G}_i$ that are non-finite and not virtually cyclic. If $h(\mathcal{G}_1) = h(\mathcal{G}_2)$, then the Morse boundaries $\m{\pi_1(\mathcal{G}_i)}$ are homeomorphic.  
\end{theorem}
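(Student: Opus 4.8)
The plan is to pass to a geometric model via Bass--Serre theory and then extract a purely topological description of the Morse boundary that remembers only the homeomorphism types of the vertex-group boundaries. First I would replace each $\pi_1\mathcal{G}_i$ by the tree of spaces $X_i$ associated to its action on the Bass--Serre tree $T_i$, whose vertex spaces are Cayley graphs of the vertex groups and whose edge spaces are the finite edge groups. Since the Morse boundary is a quasi-isometry invariant \cite{Cor16} and $\pi_1\mathcal{G}_i$ is quasi-isometric to $X_i$, it suffices to understand $\m{X_i}$. The finiteness of the edge groups is what makes this tractable: the ``gates'' through which a geodesic passes from one vertex space to a neighbour lie in a uniformly bounded set, so the tree $T_i$ faithfully records the coarse itinerary of any geodesic.

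Next I would classify Morse geodesic rays in $X_i$. Using a Morse ray's control over the ambient geometry together with the bounded gates, I expect each Morse ray to be of exactly one of two types: a \emph{trapped} ray that, after a bounded initial segment, stays in a single vertex space and there defines a point of $\m{G_v}$ for a vertex $v$; or a \emph{wandering} ray whose itinerary crosses infinitely many edges and so converges to an end of $T_i$. This should yield a set-theoretic decomposition $\m{X_i} \cong \big(\bigsqcup_v \m{G_v}\big) \sqcup E_i$, where $v$ ranges over the vertices of $T_i$ and $E_i$ is the set of Morse ends of $T_i$. I would then pin down the topology: a basic neighbourhood of a point of $E_i$ (or of a vertex-boundary point) is indexed by how deep into $T_i$ one travels and collects all boundary pieces and ends lying beyond that depth. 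Verifying that this combinatorial topology agrees with the direct-limit topology of \cite{Cor16} is where the Morse estimates do their real work.

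The crux is to show that the space so described depends only on $h(\mathcal{G}_i)$. Two features make this plausible. Because $\pi_1\mathcal{G}_i$ has infinitely many ends, its action on $T_i$ is non-elementary, so $E_i$ is totally disconnected, homogeneous, and Cantor-set-like, and each vertex-group type recurs densely and infinitely often; this is the homogeneity a uniqueness argument needs. Moreover, vertex groups that are finite or virtually cyclic contribute, respectively, nothing or an isolated pair of points already approximated by the surrounding end structure, so they are absorbed and leave no trace on the homeomorphism type---precisely why $h$ omits them. I would therefore build an abstract model space $M(S)$ attached to any set $S$ of homeomorphism types: a self-similar gluing of copies of the members of $S$, densely onto a Cantor-like set of ends, and prove by a back-and-forth argument in the spirit of the topological characterisation of the Cantor set (and of Martin--\'Swi\k{a}tkowski's boundary model in the hyperbolic case) that $\m{X_i} \cong M(h(\mathcal{G}_i))$.

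Granting this, the theorem is immediate: $h(\mathcal{G}_1) = h(\mathcal{G}_2)$ gives $M(h(\mathcal{G}_1)) = M(h(\mathcal{G}_2))$, whence $\m{\pi_1\mathcal{G}_1} \cong \m{X_1} \cong M(h(\mathcal{G}_1)) = M(h(\mathcal{G}_2)) \cong \m{X_2} \cong \m{\pi_1\mathcal{G}_2}$. I expect the main obstacle to lie entirely in the geometric half, and to be twofold. First, proving the clean dichotomy for Morse rays---and in particular that a wandering ray's limiting end is genuinely Morse, and that the assembled topology matches---is delicate, since the direct-limit Morse topology does not a priori see the tree combinatorics. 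Second, making the back-and-forth argument honest in the non-compact setting of Morse boundaries, and checking rigorously that the degenerate vertex groups are absorbed rather than creating extra isolated points. Once the correct model and its characterisation are in place, the hypothesis $h(\mathcal{G}_1)=h(\mathcal{G}_2)$ enters only as bookkeeping.
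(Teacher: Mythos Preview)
Your plan is coherent and could in principle be made to work, but it takes a very different and much harder route than the paper does. The paper does not analyse the Bass--Serre tree of spaces at all for this theorem. Instead, it invokes the Papasoglu--Whyte theorem \cite{papasoglu2002quasi} as a black box: any finitely generated group with infinitely many ends that splits over finite edge groups is quasi-isometric to a free product $G_1\ast\cdots\ast G_m$ of representatives of the quasi-isometry types of its non-virtually-cyclic infinite vertex groups (or to $F_2$ if there are none). Since the Morse boundary is a quasi-isometry invariant, this reduces the theorem immediately to the free-product case, i.e.\ to Theorem~\ref{freeproduct_theorem}, which is then applied inductively. All of the geometric work---the dichotomy for Morse rays, the combinatorial topology, the absorption of virtually cyclic factors---has already been done once and for all in proving Theorem~\ref{freeproduct_theorem}, and Papasoglu--Whyte lets one reuse it rather than redo it for arbitrary graphs of groups.

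What your approach buys is directness and self-containment: you would not need the Papasoglu--Whyte structure theorem, and the model $M(S)$ you describe would give an intrinsic characterisation of the boundary. What the paper's approach buys is economy: the entire proof of Theorem~\ref{thm:graph_of_groups} fits in a paragraph once Theorem~\ref{freeproduct_theorem} is in hand. The obstacles you flag in your last paragraph---making the Morse dichotomy precise over the tree, and running a back-and-forth argument in the direct-limit topology---are real, and the paper sidesteps them entirely by passing to free products first. If you were to pursue your route, note also that your model must cope with the fact that distinct quasi-isometry classes of vertex groups can share a Morse boundary, so the multiplicities in the tree of spaces need not match between $\mathcal{G}_1$ and $\mathcal{G}_2$; the paper handles this implicitly because Theorem~\ref{freeproduct_theorem} only sees boundaries.
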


In combination with \cite[Theorem 1.5]{charney2020complete}, we note that Theorem \ref{thm:graph_of_groups} implies, for example, that all fundamental groups of 3-manifolds of the form $M_1\# M_2$, where the $M_i$ are cusped hyperbolic 3-manifolds, have homeomorphic Morse boundaries.

Our main result generalises the main result of \cite{martin2013hyperbolic}, proving the same result for the Gromov boundary in the case of hyperbolic groups. Theorem \ref{thm:graph_of_groups} follows directly  from Theorem \ref{freeproduct_theorem} by applying Theorem 0.3 of \cite{papasoglu2002quasi}.

\begin{theorem}\label{freeproduct_theorem}
Let $A_1, A_2$ and $B_1, B_2$ be finitely generated groups that are infinite. Suppose $\m A_1\cong \m A_2$ and $\m B_1 \cong \m B_2$. Then, $\m (A_1\ast B_1) \cong \m (A_2 \ast B_2)$.
\end{theorem}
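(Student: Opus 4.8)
The plan is to describe $\m{(A_i\ast B_i)}$ through a combinatorial model and to show that this model depends only on the data preserved by the hypotheses. I would use the action of $A_i\ast B_i$ on its Bass--Serre tree $T_i$: the vertices are the cosets of the two factors, the edge groups are trivial, and the Cayley graph is a tree of spaces whose vertex spaces are isometric copies of Cayley graphs of $A_i$ and of $B_i$. Morse rays issuing from the basepoint then fall into two classes: those eventually confined to a single vertex space (a coset $gA_i$ or $gB_i$), which, since the factor cosets are quasi-isometrically embedded, contribute one copy of $\m{A_i}$ or $\m{B_i}$ per coset; and those crossing infinitely many vertex spaces, which converge to an end of $T_i$. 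The underlying set of $\m{(A_i\ast B_i)}$ is thus the disjoint union of these coset-copies together with the subset of $\partial T_i$ consisting of the \emph{Morse-realizable} ends.

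Next I would match the two models. Both $T_1$ and $T_2$ are the bipartite tree in which every $A$-vertex and every $B$-vertex has countably infinite valence --- the factors are infinite and finitely generated, hence countably infinite, and the edge groups are trivial --- so a back-and-forth argument produces a type-preserving isomorphism $\Psi\colon T_1\to T_2$ taking $A_1$-vertices to $A_2$-vertices and $B_1$-vertices to $B_2$-vertices. Together with the fixed homeomorphisms $\phi_A\colon\m{A_1}\to\m{A_2}$ and $\phi_B\colon\m{B_1}\to\m{B_2}$, this matches every coset-copy of $A_1\ast B_1$ with a coset-copy of $A_2\ast B_2$ by a homeomorphism and induces a bijection $\partial\Psi$ of end spaces. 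Assembling these yields a candidate bijection $\Phi\colon\m{(A_1\ast B_1)}\to\m{(A_2\ast B_2)}$ that is transparent on the coset-copies; the subtle point is that $\Phi$ must also carry Morse-realizable ends to Morse-realizable ends and be a homeomorphism.

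The crux, and the feature absent from the hyperbolic case of Martin and \'Swi\k{a}tkowski, is that both the set of Morse-realizable ends and the Morse direct-limit topology are a priori sensitive to the internal geometry of the factors, not merely to the homeomorphism type of their boundaries. Indeed, a ray crossing infinitely many cosets is Morse exactly when its in-coset geodesic segments are \emph{uniformly} Morse inside their vertex spaces; for a factor with no Morse directions, such as $\mathbb{Z}^2$, this restricts to bounded crossing lengths, whereas a hyperbolic factor admits arbitrary crossings. The main lemma the proof needs is therefore an invariance statement: after passing to the direct limit over Morse gauges, the criterion for a sequence of in-coset segments to be uniformly Morse, the resulting space of realizable ends, and the way coset-copies accumulate onto those ends are all determined by $\m{A_i}$ and $\m{B_i}$ up to homeomorphism, together with the common tree combinatorics. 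Granting this --- which is where the paper's earlier analysis of Morse geodesics in free products does the real work --- one checks that $\partial\Psi$ restricts to a homeomorphism between the realizable-end subspaces compatibly with $\Phi$, and that $\Phi$ and $\Phi^{-1}$ send basic Morse-open neighborhoods of each type to basic Morse-open neighborhoods. This makes $\Phi$ a homeomorphism and proves the theorem; one finally notes that the result is independent of the choices of $\Psi$, $\phi_A$ and $\phi_B$, since the topology has been shown to depend only on the combinatorial model.
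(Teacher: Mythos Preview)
Your combinatorial description of $\m{(A_i\ast B_i)}$ is essentially the paper's: infinite-type points are sequences $(u_i)_i$ of alternating factor elements that are \emph{uniformly} $M$-Morse for some $M$, and finite-type points are such finite sequences followed by a Morse direction of a factor. The gap is in the matching step. Your tree isomorphism $\Psi$ is produced by an unconstrained back-and-forth, so at each $A$-vertex it amounts to an arbitrary bijection between the edge sets, i.e.\ an arbitrary bijection $A_1\to A_2$. Such a bijection has no reason to send $M$-Morse elements of $A_1$ into $M'$-Morse elements of $A_2$ for any $M'$ depending only on $M$: it could, for instance, spread a single $\P{M}{A_1}$ over infinitely many strata $\P{M_k}{A_2}$. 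Consequently $\partial\Psi$ need not carry Morse-realizable ends to Morse-realizable ends, and even on finite-type points $\Phi$ need not land in any fixed stratum $\co{M'}$, so continuity for the direct-limit topology fails outright. The ``main lemma'' you ask to grant is therefore not a consequence of the paper's Section~\ref{chapter:comb} analysis; it is precisely the content of the theorem, and it is false for a generic $\Psi$.

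What the paper actually does at this point is construct, from the boundary homeomorphism $p_A$, a specific bijection $\hat p_A\colon A_1\to A_2$ with two properties: $M$-Morse elements go to $M'$-Morse elements with $M'=M'(M)$ (Lemma~\ref{local_map:morse_bound}), and filled neighbourhoods $\U{M}{k}{z}$ map into filled neighbourhoods $\U{M_{p^2}}{l}{p(z)}$ (Proposition~\ref{local_map:continuity}). The construction is the nontrivial idea you are missing: for $x\in A_1$ one passes to the Morse ray $\lambda_x$, pushes it through $p_A$, and picks $\hat p_A(x)$ sufficiently far along a realization of $p_A(\lambda_x)$, with a back-mapping condition \eqref{map:construction:back_mapping} to control the Morse gauge of the inverse. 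Only with this Morse-controlled $\hat p_A$ (and the analogous $\hat p_B$) does the induced map $\bar p$ on the combinatorial boundary respect strata and neighbourhoods. If you want to salvage your outline, replace the arbitrary back-and-forth $\Psi$ by the tree isomorphism whose local bijections at $A$- and $B$-vertices are exactly $\hat p_A$ and $\hat p_B$; but producing those bijections \emph{is} the proof.
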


\subsection*{Further Directions}
A natural question is if Theorem \ref{thm:graph_of_groups} and Theorem \ref{freeproduct_theorem} also hold for the topology of the Morse boundary as defined in \cite{CM19}. 
\begin{question}
Do Theorem \ref{thm:graph_of_groups} and Theorem \ref{freeproduct_theorem} hold for the topology defined in \cite{CM19}? 
\end{question}

The sublinearly Morse boundary was defined in \cite{QR20} and can be viewed as a generalization of the Morse boundary. Hence, a similar question can be asked.

\begin{question}
Do Theorem \ref{thm:graph_of_groups} and Theorem \ref{freeproduct_theorem} hold for the sublinearly Morse boundary?
\end{question}

\subsection*{Outline of proof} We follow the proof of \cite{martin2013hyperbolic} closely, adapting their ideas and definitions to non-hyperbolic groups. That is, we give a combinatorial description of the Morse boundary of a free product and then use the homeomorphism $p:\m{A_1}\to\m{A_2}$ to construct a bijection $\hat{p}: A_1\to A_2$. The bijection $\hat{p}$ we construct induces the homeomorphism $p$. So another consequence of our proof is Corollary \ref{cor:homeo_induced_by_bijection}, which states that every homeomorphism of the Morse boundary of two groups is induced by a bijection of the groups. In the construction of the bijection, we deviate from the proof of the hyperbolic case. In the hyperbolic case, the bijection $\hat{p}$ has to be compatible with the homeomorphism $p$ in the sense that any sequence $(x_i)$ in $A_1$ that converges to some boundary element $x$ in $\partial A_1$ satisfies that the sequence $(\hat{p}(x_i))$ converges to $p(x)$. However, in our case, the bijection $\hat{p}$ also has to satisfy that for every Morse gauge $M$, there exists a Morse gauge $M^\prime$ such that $\hat{p}(\P{M}{A_1})\subset\P{M^\prime}{A_2}$. The main challenge in constructing $\hat{p}$ is that the sets $\P{M}{A_1}$ do not necessarily have a total order. We managed to construct a bijection satisfying all conditions by picking a Morse geodesic line $\lambda$ and then defining $\hat{p}(a)$ as one of the points far along $p(a\cdot \lambda)$. With the correct technical conditions of far away enough (which can be obtained by mapping the points back to $A_1$), the bijection satisfies all the required conditions and induces a homeomorphism from $\m{(A_1\ast B_1)}$ to $\m{(A_2\ast B_2)}$ (see Section \ref{section:bijections_on_factors} Condition \ref{map:construction:back_mapping}). A more detailed outline of the proof of Theorem \ref{freeproduct_theorem} can be found in Section \ref{section:proof_outline}.

\subsection*{Outline of paper} In Section~\ref{chapter:morse} we recall definitions and properties around the Morse boundary and prove some technical lemmas about the Morse boundary. In Section~\ref{chapter:comb} we give an alternate description of the Morse boundary of a free product. In Section~\ref{chapter:map} we show that the Morse boundary of a free product depends only on the Morse boundary of its factors. Finally in Section~\ref{chapter:graph_of_groups} we prove Theorem~\ref{thm:graph_of_groups}. 

\subsection*{Acknowledgements} This paper was my Master Thesis. I want to thank my supervisors Alessandra Iozzi and Matthew Cordes for their helpful guidance throughout my work on the thesis. I also want to thank Alessandro Sisto for helping to find the topic of the thesis. Lastly, I want to thank the referee, whose comments have led to clarifications and helped improve the exposition. 

\section{Morse Properties}\label{chapter:morse}

\subsection{Morse geodesics}
\begin{definition}[Morse gauge] A Morse gauge is a map $M : \R_{\geq 1}\times \R_{\geq_0}\to \R_{\geq0}$. We denote the set of all Morse gauges by $\mathcal{M}$.
\end{definition}

In this section unless stated otherwise, $(X,e)$ is a pointed proper metric geodesic space and $M$ a Morse gauge. For points $p, q\in X$ we denote by $[p, q]$ any geodesic from $p$ to $q$.

The set of all Morse gauges $\mathcal{M}$ has a partial order defined by $M\leq M^\prime$ if and only if $M(\lambda, \eps)\leq M^\prime(\lambda, \eps)$ for all $(\lambda, \eps) \in \R_{\geq 1}\times \R_{\geq 0}$. Observe that with this order, the maximum of a set of Morse gauges is the pointwise maximum.

\begin{definition}[Morse geodesic]
A geodesic $\gamma$ in $X$ is called $M$-Morse, if for all $(\lambda, \eps)$-quasi-geodesics $\eta$ with endpoints on $\gamma$ we have
\begin{align}
    \eta \subset \mathcal{N}_{M(\lambda, \eps)}(\gamma),
\end{align}
where $\mathcal{N}_{M(\lambda, \eps)}(\gamma)$ denotes the closed $M(\lambda, \eps)$-neighbourhood of $\gamma$ in $X$.
A geodesic is called Morse, if it is $M$-Morse for some Morse gauge $M$.
\end{definition}

\begin{remark} \label{morse:finite}
Observe that every geodesic segment $\gamma$ is $M$-Morse, where $M$ depends only on the length of $\gamma$. Namely, a $(\lambda, \eps)$-quasi-geodesic with endpoints that are at distance at most $\abs{\gamma}$ has bounded length $n$, depending only on $\lambda, \eps$ and $\abs{\gamma}$. Consequently, the quasi-geodesic has to stay in a $C$ neighbourhood of $\gamma$ where $C$ depends only on $\lambda, \eps$ and $\abs{\gamma}$.
\end{remark}

\begin{definition}[$\delta_M$] Let $M$ be a Morse gauge. We define
\begin{align}
    \delta_M = \max\{4 M (1, 2M(5, 0) ) + 2 M (5, 0), 8 M (3, 0) \}.
\end{align}
\end{definition}

The value $\delta_M$ is a measure of how close a point or geodesic is from some $M$-Morse geodesic $\gamma$, where all distances smaller than $\delta_M$ are considered to be close.

Morse geodesics have similar properties as geodesics in hyperbolic spaces. We cite some of these properties proven in \cite{Charney_2014} and \cite{Cor16}. Additionally, we prove some more results. 

\begin{lemma}[Adapted version of Lemma 2.5 of \cite{Charney_2014}]\label{lem_2.5_3}
Let $\gamma$ be an $M$-Morse quasi-geodesic in a geodesic metric space $X$.
\begin{enumerate}[label = \roman*)]
    \item If $\rho$ is a quasi-geodesic whose Hausdorff distance from $\gamma$ is at most $C$, then $\rho$ is $M^\prime$-Morse where $M^\prime$ depends only on $M$ and $C$.
    \item If $Y$ is a geodesic metric space and 
    $f : X \to Y$ is a $(\lambda, \eps)$-quasi-isometry, then $f \circ \gamma$ is
    $M^{\prime\prime}$-Morse where $M^{\prime\prime}$ depends only on $\lambda, \eps$ and $M$.
    \item If $\gamma$ is a $(K, L)$-quasi-geodesic, then there exists $C$ depending only on $M, K, L$ such that for any two points $x = \gamma(t)$ and $y = \gamma(t^\prime)$ on $\gamma$,
    every choice of geodesic $[x, y]$ has Hausdorff distance at most $C$ from $\gamma([t, t^\prime])$.
\end{enumerate}
\end{lemma}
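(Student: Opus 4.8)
The three parts are of increasing difficulty, so I would treat them in order, reserving the real work for (iii). For (i), the plan is the standard truncate-and-extend trick. Let $\eta$ be any $(\lambda,\eps)$-quasi-geodesic with endpoints $a,b$ on $\rho$. Since the Hausdorff distance between $\rho$ and $\gamma$ is at most $C$, I can pick points $a',b'$ on $\gamma$ with $d(a,a'),d(b,b')\le C$ and form the concatenation $\eta'=[a',a]\cup\eta\cup[b,b']$. Prepending and appending geodesic segments of length at most $C$ turns $\eta$ into a $(\lambda',\eps')$-quasi-geodesic with endpoints on $\gamma$, where $\lambda',\eps'$ depend only on $\lambda,\eps,C$. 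The Morse property of $\gamma$ then gives $\eta\subset\eta'\subset\mathcal{N}_{M(\lambda',\eps')}(\gamma)\subset\mathcal{N}_{M(\lambda',\eps')+C}(\rho)$, so $M'(\lambda,\eps):=M(\lambda',\eps')+C$ witnesses that $\rho$ is $M'$-Morse. The only thing to check carefully is the bookkeeping for $\lambda',\eps'$ under concatenation with bounded segments, which is routine.

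For (ii), I would pull back and push forward through a quasi-inverse. Fix a quasi-inverse $\bar f$ of $f$, whose constants and whose displacement bound $d(\bar f f(p),p)$ depend only on $\lambda,\eps$; recall also that $f\circ\gamma$ is again a quasi-geodesic. Given a $(\lambda_1,\eps_1)$-quasi-geodesic $\eta$ in $Y$ with endpoints on $f\circ\gamma$, the image $\bar f\circ\eta$ is a quasi-geodesic in $X$, with constants depending only on $\lambda,\eps,\lambda_1,\eps_1$, whose endpoints lie within a bounded distance (depending on $\lambda,\eps$) of $\gamma$. Applying part (i) — or directly the extend-to-$\gamma$ argument together with the Morse property of $\gamma$ — confines $\bar f\circ\eta$ to a neighbourhood $\mathcal{N}_{D}(\gamma)$ with $D$ depending only on $M,\lambda,\eps,\lambda_1,\eps_1$. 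Pushing forward by $f$ and using $f\circ\bar f\approx\mathrm{id}$ together with $f(\mathcal{N}_D(\gamma))\subset\mathcal{N}_{\lambda D+\eps}(f\circ\gamma)$ then confines $\eta$ to a neighbourhood of $f\circ\gamma$ of radius $M''(\lambda_1,\eps_1)$ depending only on $\lambda,\eps,M$.

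The heart of the lemma is (iii), and I expect the reverse Hausdorff inclusion to be the main obstacle. Write $N=M(1,0)$ and $\sigma=\gamma([t,t'])$, and let $[x,y]$ be any geodesic. One inclusion is immediate: $[x,y]$ is a $(1,0)$-quasi-geodesic with endpoints on $\gamma$, so $[x,y]\subset\mathcal{N}_N(\gamma)$. The difficulty is that this only places $[x,y]$ near $\gamma$, not near the subsegment $\sigma$, and it says nothing about $\sigma$ staying near $[x,y]$. To control both, the plan is to build a coarse correspondence and run a discrete intermediate-value argument. For the inclusion $\sigma\subset\mathcal{N}_C([x,y])$, assign to each point $w\in[x,y]$ a parameter $\pi(w)\in[t,t']$ with $d(w,\gamma(\pi(w)))\le N$; because $\gamma$ is a $(K,L)$-quasi-geodesic, moving $w$ a unit along $[x,y]$ changes $\gamma(\pi(w))$ by at most $2N+1$, so $|\pi(w)-\pi(w')|\le K(2N+1+L)$, i.e. $\pi$ is coarsely Lipschitz, with $\pi(x)=t$ and $\pi(y)=t'$. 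Traversing $[x,y]$ from $x$ to $y$, the value $\pi(w)$ sweeps coarsely from $t$ to $t'$ in bounded steps, so every $s\in[t,t']$ is within $K(2N+1+L)$ of some $\pi(w)$; bounding $d(\gamma(s),w)$ through $\gamma(\pi(w))$ then yields $\sigma\subset\mathcal{N}_C([x,y])$ with $C$ depending only on $N,K,L$.

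With this inclusion in hand, the reverse inclusion $[x,y]\subset\mathcal{N}_C(\sigma)$ follows by the symmetric device: the nearest-point map $s\mapsto\rho(s)\in[x,y]$ is now coarsely Lipschitz (using the just-proved bound $d(\gamma(s),[x,y])\le C$), fixes the endpoints $\rho(t)=x$ and $\rho(t')=y$, and hence sweeps coarsely across the whole geodesic $[x,y]$, so every point of $[x,y]$ is boundedly close to some $\gamma(s)$ with $s\in[t,t']$. Both constants depend only on $M,K,L$, as required. The only real subtlety is verifying the coarse-Lipschitz estimates and the discrete intermediate-value step; everything else is bookkeeping.
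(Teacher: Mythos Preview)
The paper does not give its own proof of this lemma; it is quoted as an adaptation of Lemma~2.5 of \cite{Charney_2014}, together with the remark that although the original is stated for CAT(0) spaces, \cite{Cor16} observes it holds in arbitrary geodesic metric spaces. So there is nothing to compare your argument against, and your sketch is essentially the standard one.

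Your arguments for (i) and (ii) are correct. In (iii) there is one imprecision worth flagging: you write ``assign to each point $w\in[x,y]$ a parameter $\pi(w)\in[t,t']$ with $d(w,\gamma(\pi(w)))\le N$'', but the Morse inequality $[x,y]\subset\mathcal{N}_N(\gamma)$ only gives a nearby point on all of $\gamma$, not on the subarc $\gamma([t,t'])$, so $\pi(w)$ may well land outside $[t,t']$. Fortunately this does not damage the argument: the coarse-Lipschitz estimate $|\pi(w)-\pi(w')|\le K(2N+1+L)$ and the discrete intermediate-value step only require that $\pi$ be defined on $[x,y]$, take the values $t$ and $t'$ at the endpoints, and jump by a bounded amount between unit-spaced points. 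None of this needs $\pi(w)\in[t,t']$. So simply drop that clause and the rest of your proof of (iii) goes through verbatim.
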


The original version of Lemma~2.5 of \cite{Charney_2014} assumes that $X$ is a CAT(0) space. However, as noted in \cite{Cor16}, the statement also holds if $X$ is a geodesic metric space.

\begin{corollary}[Cor 2.5 and 2.6 of \cite{Cor16} combined]\label{cor2.5_2.6_cor16}
Let X be a geodesic metric space and $\alpha : [0, \infty) \to X$ be an $N$-Morse geodesic ray. 
\begin{enumerate}[label= \roman*)]
    \item Let $\beta: [0, \infty)\to X$ be a geodesic ray such that $\beta(0) = \alpha(0)$ and $d(\alpha(t), \beta(t)) < K $ for $t\in [0, D]$ for some $D\geq 6K$. Then $d((\alpha(t), \beta(t)) < \delta_N$ for all $t\in [0, D - 2K]$.
    \item Let $\beta: [0, \infty)\to X$ be a geodesic ray such that $d(\alpha(t), \beta(t))<K$ for all $t\in [0, \infty)$. Then for all $t\in [2K, \infty)$, $(\alpha(t), \beta(t)) < \delta_N + d(\alpha(0), \beta(0))$. In particular, if $\alpha(0) = \beta(0)$ then $d(\alpha(t), \beta(t))<\delta_N$ for all $t\in [0, \infty)$. 
\end{enumerate}
\end{corollary}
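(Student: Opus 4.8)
The plan is to reduce both statements to a single estimate, whose essential feature is that the resulting bound is \emph{independent} of the fellow-traveling constant $K$. I would begin with an elementary observation that pins down the projection parameter. Write $o := \alpha(0) = \beta(0)$ in part (i). Since $\beta$ is a geodesic, $d(o, \beta(t)) = t$; if $\alpha(s)$ is a nearest-point projection of $\beta(t)$ onto $\alpha$ and $r := d(\beta(t), \alpha)$, then applying the triangle inequality to $o, \beta(t), \alpha(s)$ gives $|s - t| \le r$, and hence $r \le d(\alpha(t), \beta(t)) \le |s-t| + r \le 2r$. Thus $d(\beta(t),\alpha)$ and $d(\alpha(t),\beta(t))$ are comparable, and it suffices to bound $r$ by $\delta_N / 2$. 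It is worth noting already that this inequality is where the basepoint assumption enters: with distinct basepoints one instead gets $|s - t| \le r + d(\alpha(0), \beta(0))$, which is exactly what will later produce the extra additive term in part (ii).

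The core of the argument is a maximal-excursion estimate that manufactures detour quasi-geodesics with a \emph{fixed} multiplicative constant and \emph{zero} additive constant. Let $R$ be the largest value of $d(\beta(\tau), \alpha)$ over the relevant parameter interval and let $p = \beta(\tau^\ast)$ realize it. Moving a definite multiple of $R$ along $\beta$ away from $\tau^\ast$ and projecting those points to $\alpha$, the geodesic identity $d(\beta(\tau^\ast \pm cR), p) = cR$ combined with maximality $d(\beta(\tau^\ast \pm cR), \alpha) \le R$ forces the corresponding projections to be separated on $\alpha$ by at least a fixed multiple of $R$. The concatenation of the two geodesic segments running from these projections to $p$ is then a $(\lambda_0, 0)$-quasi-geodesic with both endpoints on $\alpha$, where $\lambda_0 \in \{3, 5\}$ is universal; applying the $N$-Morse property directly yields $R = d(p, \alpha) \le N(\lambda_0, 0)$, a bound with no dependence on $K$. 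This is precisely the mechanism producing the constants $N(5,0)$ and $N(3,0)$ appearing in $\delta_N$, and the $\max$ together with the remaining numerical factors ($4N(1, 2N(5,0)) + 2N(5,0)$ versus $8N(3,0)$) reflects the two slightly different configurations needed, namely one-sided versus two-sided detours, which cover the local statement (i) and the global statement (ii) respectively.

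With this estimate in hand I would assemble the two statements. For (i), fixing $t \in [0, D - 2K]$, one checks that the hypotheses $D \ge 6K$ and $r \le K$ leave precisely the room inside $[0, D]$ that the detour construction requires; the loss of $2K$ is the margin consumed by the forward detour near the endpoint $D$. This gives $d(\alpha(t), \beta(t)) < \delta_N$. For the ``in particular'' clause of (ii), when $\alpha(0) = \beta(0)$ the hypothesis $d(\alpha(t), \beta(t)) < K$ holds on every interval $[0, D]$, so (i) applies for all $D$ and letting $D \to \infty$ yields the bound on all of $[0, \infty)$. The general case of (ii) then follows by comparing $\beta$ with a geodesic ray issuing from $\alpha(0)$, the extra additive $d(\alpha(0), \beta(0))$ and the initial $2K$ margin being exactly what the shifted projection inequality from the first paragraph predicts.

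The main obstacle is the step removing the dependence on $K$. The naive route, replacing the endpoints of a subsegment of $\beta$ by their projections and invoking the Morse property, only delivers a bound of the shape $N(1, O(K))$, which is circular for a $K$-independent conclusion. Everything hinges on arranging, through the extremal choice of $p$, that the relevant detour is a quasi-geodesic whose parameters are the universal $(\lambda_0, 0)$, so that the gauge $N$ is evaluated at inputs that never see $K$. Making the separation thresholds and the margins $6K$ and $2K$ line up cleanly with the explicit form of $\delta_N$ is the delicate, though ultimately routine, bookkeeping.
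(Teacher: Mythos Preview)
The paper does not give its own proof of this corollary; it is stated as a direct citation of Corollaries~2.5 and~2.6 of \cite{Cor16}, with only the informal explanation that these say geodesics ``that are close at the beginning and not too far away at the end are close for a long time.'' So there is no in-paper argument to compare against.

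That said, your outline is essentially the argument that appears in \cite{Cor16}. The key idea you identify---choosing an extremal point $p$ on $\beta$ furthest from $\alpha$, flanking it along $\beta$ by a fixed multiple of $R=d(p,\alpha)$, projecting the flanks to $\alpha$, and thereby producing a $(\lambda_0,0)$-quasi-geodesic with $\lambda_0$ universal---is exactly how Cordes removes the $K$-dependence, and it is the reason the constants $N(3,0)$ and $N(5,0)$ (rather than anything involving $K$) appear in $\delta_N$. Your first paragraph correctly isolates why the basepoint hypothesis matters and why a shifted basepoint costs an additive $d(\alpha(0),\beta(0))$.

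One point that is left vague in your sketch: the passage from the maximal-excursion bound $R\le N(\lambda_0,0)$ to the precise form $4N(1,2N(5,0))+2N(5,0)$ is a genuine second step, not just bookkeeping. Once $R\le N(5,0)$ is known, the segment of $\beta$ together with the short connectors to $\alpha$ becomes a $(1,2N(5,0))$-quasi-geodesic, and a further application of the Morse property at \emph{these} parameters yields the final bound. Your phrase ``the remaining numerical factors \ldots\ reflects the two slightly different configurations'' gestures at this but does not say it; if you are writing up a self-contained proof you should make that two-stage structure explicit, since the single detour by itself does not reach $\delta_N/2$.
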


Corollary~\ref{cor2.5_2.6_cor16} ii) shows that two geodesic rays that have the same direction are close from some point on. Part i) shows that two geodesics (where at least one of them is Morse) that are close at the beginning and not too far away at the end are close for a long time. The next Lemma proves a similar but slightly different statement. 

\begin{lemma}[Lemma 2.7 of \cite{Cor16}]
Let $X$ be a geodesic metric space and let $\alpha_1, \alpha_2: [0, A]\to X$ be $N$-Morse geodesics with $\alpha_1(0) = \alpha_2(0)$. If $d(\alpha_1(s), \mathrm{im}(\alpha_2))<K$ for some $K>0$ and $s\in [0, A]$, then $d(\alpha_1(t), \alpha_2(t))\leq 8 N(3, 0)< \delta_N$ for $t< s-K-4 N(3, 0)$.
\end{lemma}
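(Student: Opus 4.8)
The plan is to transport the Morse property across the short ``bridge'' joining the two geodesics near parameter $s$, while exploiting that both geodesics emanate from the common basepoint $\alpha_1(0)=\alpha_2(0)$. First I would fix a point realising the hypothesis: choose $s'$ with $d(\alpha_1(s),\alpha_2(s'))<K$. Since $\alpha_1,\alpha_2$ are geodesics based at the common point, the parameters equal the distances to that point, so the reverse triangle inequality gives $|s-s'|=|d(\alpha_1(0),\alpha_1(s))-d(\alpha_2(0),\alpha_2(s'))|\le d(\alpha_1(s),\alpha_2(s'))<K$. In particular $s'>s-K$, so for every $t<s-K-4N(3,0)$ the point $\alpha_2(t)$ is defined and $t$ lies well below the ``meeting region'' near parameter $s\approx s'$.

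Next I would build a quasi-geodesic with endpoints on $\alpha_2$ out of an initial segment of $\alpha_1$ together with the bridge. Consider the concatenation of $\alpha_1|_{[0,s]}$ with a geodesic $[\alpha_1(s),\alpha_2(s')]$; this is a path from $\alpha_2(0)$ to $\alpha_2(s')$, both of which lie on $\mathrm{im}(\alpha_2)$. Because the bridge has length $<K$, the total length of the path exceeds the distance between its endpoints by at most $2K$, and a direct triangle-inequality estimate then shows the path is a $(1,2K)$-quasi-geodesic. Applying the $N$-Morse property of $\alpha_2$ yields $d(\alpha_1(t),\mathrm{im}(\alpha_2))\le N(1,2K)$ for all $t\le s$. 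Converting ``closeness as a set'' into ``closeness at equal parameter'' again uses the common basepoint: if $d(\alpha_1(t),\alpha_2(\theta))\le R$ then $|t-\theta|\le R$ and hence $d(\alpha_1(t),\alpha_2(t))\le 2R$. This already gives a rough fellow-travelling bound along the whole initial segment, but with a constant depending on $K$.

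The substance of the lemma is upgrading this $K$-dependent bound to the universal constant $8N(3,0)$, and here the cut-off $t<s-K-4N(3,0)$ is essential. The only place where the lower quasi-geodesic inequality for the bridged path can degrade is across the bridge, i.e.\ for pairs of points near parameter $s$; the buffer of width $K+4N(3,0)$ keeps $\alpha_1(t)$ (and the comparison points needed to locate it) a definite distance from the bridge, so that on the relevant range one may replace the crude $(1,2K)$-estimate by a genuine $(3,0)$-quasi-geodesic estimate, whose Morse neighbourhood has the $K$-free width $N(3,0)$. Feeding this into the same set-to-parameter conversion, and combining the two symmetric conclusions (each geodesic lying near the other) from the common basepoint, produces $d(\alpha_1(t),\alpha_2(t))\le 8N(3,0)$; the final inequality $8N(3,0)<\delta_N$ is immediate from the definition of $\delta_N$.

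\textbf{The hard part} will be precisely this last step: making the output constant independent of $K$. The naive transport of the Morse property pays an additive error of order $K$ (the bridge length), which only gives a bound like $N(1,2K)$; the whole force of the statement is that, sufficiently far back from the near-meeting of the two geodesics, the closeness is governed by the intrinsic quantity $N(3,0)$ alone. Verifying that the $(3,0)$-quasi-geodesic inequality holds uniformly once one is the prescribed distance $K+4N(3,0)$ from the bridge, and tracking how the doublings inherent in the set-to-parameter conversion and in the two-sided comparison accumulate to exactly $8N(3,0)$, is the delicate bookkeeping at the heart of the proof.
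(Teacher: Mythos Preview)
This lemma is not proved in the paper at all; it is simply quoted as Lemma~2.7 of \cite{Cor16} and used as a black box throughout. There is therefore no argument in this paper to compare your proposal against.

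For what it is worth, your outline is in the right spirit and you correctly locate the crux: obtaining a bound independent of $K$. In Cordes' original proof the constant $N(3,0)$ appears directly rather than via an ``upgrade'' step: one takes $\alpha_2(s')$ to be a \emph{closest} point of $\alpha_2$ to $\alpha_1(s)$ and uses the nearest-point-projection fact (recorded in this paper as Lemma~\ref{quasi-geodesic}) that a geodesic segment concatenated with a shortest bridge to a nearest point is a $(3,0)$-quasi-geodesic. Applying the Morse property of $\alpha_1$ to this path (which has endpoints $\alpha_1(0)$ and $\alpha_1(s)$) puts $\alpha_2[0,s']$ in the $N(3,0)$-neighbourhood of $\alpha_1$ immediately, with no $K$-dependent constant to remove; the buffer $K+4N(3,0)$ then only serves to convert set-closeness into same-parameter closeness and to absorb $|s-s'|<K$. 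Your route through a $(1,2K)$-quasi-geodesic followed by an unspecified upgrade is more circuitous, and the upgrade step---which you flag as the hard part---is not actually carried out in your sketch.
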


The following lemma from \cite{Charney_2019} shows that in a triangle, if two sides are $M$-Morse, then the third side is $M^\prime$-Morse, where $M^\prime$ depends only on $M$. For triangles where all sides are finite, this is shown in \cite{Cor16}. 

\begin{lemma}[Lemma 2.4 of \cite{Charney_2019}]\label{triangle}
Given a Morse gauge $N$ and a triangle with vertices $a$, $b$ and $c$ in $X \cup\m{X}$, there exists a Morse gauge $N'$ such that if two sides $[a, b]$, $[a, c]$ are $N$-Morse, then the third side $[b, c]$ is $N'$-Morse.
\end{lemma}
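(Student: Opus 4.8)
The plan is to reduce the statement to a single Hausdorff containment and then invoke the stability results above. The two hypotheses give two $N$-Morse geodesics $[a,b]$ and $[a,c]$ emanating from the common vertex $a$, so my strategy is to replace the unknown side $[b,c]$ by an explicit path $\pi$ that runs along these two Morse sides, prove that $\pi$ is a Morse quasi-geodesic with data depending only on $N$, and then transfer this control to $[b,c]$ via Lemma~\ref{lem_2.5_3}. First I would dispose of the case where one or more of $a,b,c$ lies in $\m X$: approximating an ideal vertex by points far along the corresponding Morse ray produces finite triangles whose third sides I will have shown are uniformly $N'$-Morse, and Lemma~\ref{lem_2.5_3} iii) together with Corollary~\ref{cor2.5_2.6_cor16} lets me pass to the limit. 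So assume $a,b,c\in X$.

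Next I would locate the coarse branch point of the two Morse sides. Parametrising $[a,b]$ and $[a,c]$ by arclength from $a$, Corollary~\ref{cor2.5_2.6_cor16} and Lemma~2.7 of \cite{Cor16} control exactly how long the two geodesics $\delta_N$-fellow-travel before separating; let $p=[a,b](T)$ realise the last parameter $T$ at which they remain $\delta_N$-close, so that $d([a,b](T),[a,c](T))\le \delta_N$ while beyond $T$ the two geodesics diverge. I then define $\pi$ to be the concatenation that follows $[a,b]$ from $b$ back to $p$ and then follows $[a,c]$ from $p$ out to $c$. Heuristically $T$ plays the role of the Gromov product $(b\cdot c)_a$, so that the length of $\pi$, namely $(d(a,b)-T)+(d(a,c)-T)$, is comparable to $d(b,c)$.

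The crux is to show that $\pi$ is a $(\lambda_0,\epsilon_0)$-quasi-geodesic and is $\hat N$-Morse, with all constants depending only on $N$. Each half of $\pi$ is a subsegment of an $N$-Morse geodesic, so for the quasi-geodesic inequality the only thing to verify is the absence of deep backtracking as $\pi$ crosses $p$; this is where I would use that beyond the branch point $[a,b]$ and $[a,c]$ genuinely diverge, giving a lower bound $d(x,y)\gtrsim s+s'$ for points $x,y$ on the two halves at arclengths $s,s'$ past $p$. Morse-ness of $\pi$ then follows by treating any quasi-geodesic with endpoints on $\pi$ according to whether it stays near one half or crosses near $p$ (where the two halves are within $\delta_N$), using the $N$-Morse property of each half together with Lemma~\ref{lem_2.5_3} i).

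Once $\pi$ is known to be an $\hat N$-Morse $(\lambda_0,\epsilon_0)$-quasi-geodesic, the conclusion is immediate: by Lemma~\ref{lem_2.5_3} iii) the geodesic $[b,c]$, sharing the endpoints $b,c$ with $\pi$, lies within Hausdorff distance $C=C(N)$ of $\pi$, and then by Lemma~\ref{lem_2.5_3} i) the geodesic $[b,c]$ is $N'$-Morse with $N'$ depending only on $\hat N$, hence only on $N$. I expect the main obstacle to be precisely the crux step: without global hyperbolicity, both the no-backtracking estimate at the branch point and the divergence lower bound must be extracted entirely from the Morse hypothesis on the two sides via Corollary~\ref{cor2.5_2.6_cor16} and Lemma~2.7 of \cite{Cor16}, and the ideal-vertex case then requires enough uniformity in these constants to survive the limiting argument.
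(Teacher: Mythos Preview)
The paper does not prove this lemma at all: it is quoted verbatim as Lemma~2.4 of \cite{Charney_2019} and used as a black box, with no argument supplied. So there is no ``paper's own proof'' to compare against; your proposal is an attempt to reprove a result the author simply imports.

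As for the proposal itself, the strategy of concatenating the two $N$-Morse legs at a coarse branch point and then invoking Lemma~\ref{lem_2.5_3} is the natural one, and you have correctly isolated where the real work lies. The gap is exactly the step you flag: from ``$T$ is the last time $[a,b]$ and $[a,c]$ are $\delta_N$-close'' you cannot directly conclude a linear divergence estimate $d(x,y)\gtrsim s+s'$ for points at arclengths $s,s'$ past $p$. Knowing only that $d([a,b](t),[a,c](t))>\delta_N$ for $t>T$ is far too weak---the two geodesics could in principle remain at distance $2\delta_N$ for a long time, in which case $\pi$ backtracks badly and is not a quasi-geodesic with constants depending only on $N$. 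What is actually needed (and what the proof in \cite{Charney_2019} uses) is a thinness statement: the side $[b,c]$ lies in a uniform neighbourhood of $[a,b]\cup[a,c]$, which one obtains by projecting a point of $[b,c]$ to the nearest point on $[a,b]$ and arguing via the Morse property that the resulting broken path is a quasi-geodesic. Your outline gestures at the right lemmas (Corollary~\ref{cor2.5_2.6_cor16}, Lemma~2.7 of \cite{Cor16}) but does not yet explain how they yield the required lower bound, and without that the quasi-geodesic claim for $\pi$ is unsupported.
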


The next lemma is a technical lemma about the concatenation of quasi-geodesics. The lemma is well-known and a similar version appears for example \cite{QR22}.

\begin{lemma}\label{quasi-geodesic}
Let $p, q \in X$ and let $\gamma : [0, T]\to X$ be a continuous $(\lambda, \eps)$-quasi-geodesic. Let $t, t^\prime\in [0, T]$ such that $\gamma(t)$ and $\gamma(t^\prime)$ are points on $\gamma$ closest to $p$ and $q$ respectively. Finally, let $\alpha$ be a geodesic from $p$ to $\gamma(t)$ and let $\beta$ be a geodesic from $\gamma(t^\prime)$ to $q$. The following hold: 
\begin{enumerate}[label= \roman*)]
    \item $\alpha \cdot \gamma[t, T]$ is a $(3\lambda, \eps)$ -quasi-geodesic.
    \item If $\abs{t - t^\prime} \geq 3\lambda(d(p, \gamma(t))+d(q, \gamma(t^\prime)))$, then $\alpha \cdot \gamma[t, t^\prime]\cdot \beta$ is a $(3\lambda, \eps)$ -quasi-geodesic.
\end{enumerate}
\end{lemma}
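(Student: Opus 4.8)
The plan is to verify the two quasi-geodesic inequalities directly on the concatenated path after fixing an explicit parametrization. For part (i), reparametrize $c:=\alpha\cdot\gamma[t,T]$ on $[0,L]$ with $L=\abs{\alpha}+(T-t)$, where $\abs{\alpha}=d(p,\gamma(t))$, so that $c$ runs through $\alpha$ by arclength on $[0,\abs{\alpha}]$ (with $c(0)=p$ and $c(\abs{\alpha})=\gamma(t)$) and then $c(\abs{\alpha}+u)=\gamma(t+u)$ for $u\in[0,T-t]$. I fix parameters $s_1\le s_2$ and split into three cases. When $s_1,s_2$ lie in the same piece the inequalities are immediate: the $\alpha$-part is a genuine geodesic, and on the $\gamma$-part they are inherited from $\gamma$ being a $(\lambda,\eps)$-quasi-geodesic, since the $\gamma$-parameters differ by exactly $\abs{s_1-s_2}$. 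Thus the only real case is the mixed one, $s_1\in[0,\abs{\alpha}]$ and $s_2=\abs{\alpha}+u_2$ with $c(s_2)=\gamma(t+u_2)$. For the upper bound there, the triangle inequality through $\gamma(t)$ together with the quasi-geodesic bound on $\gamma$ gives $d(c(s_1),c(s_2))\le(\abs{\alpha}-s_1)+\lambda u_2+\eps$, which is at most $\lambda(s_2-s_1)+\eps$ because $\lambda\ge 1$.

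The heart of the argument is the lower bound in this mixed case, and this is the step I expect to be the main obstacle. Here I would exploit the closest-point hypothesis: since $\gamma(t)$ realizes the distance from $p$ to $\mathrm{im}(\gamma)$, every point $\gamma(t+u_2)$ satisfies $d(p,\gamma(t+u_2))\ge\abs{\alpha}$; combined with $d(p,c(s_1))=s_1$ and the triangle inequality this yields the first bound $d(c(s_1),c(s_2))\ge\abs{\alpha}-s_1$. A second bound comes from the reverse triangle inequality through $\gamma(t)$ and the quasi-geodesic lower bound on $\gamma$, giving $d(c(s_1),c(s_2))\ge\tfrac1\lambda u_2-\eps-(\abs{\alpha}-s_1)$. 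Writing $A=\abs{\alpha}-s_1$ and $B=u_2$, so that $s_2-s_1=A+B$, I would take the combination $\tfrac23$(first)$+\tfrac13$(second) to obtain $d\ge\tfrac13 A+\tfrac1{3\lambda}B-\tfrac13\eps\ge\tfrac1{3\lambda}(s_2-s_1)-\eps$, where the last inequality again uses $\lambda\ge 1$. Finding exactly this pair of lower bounds and the convex combination that absorbs the $\lambda$-factor is the delicate point.

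For part (ii) I would parametrize $c:=\alpha\cdot\gamma[t,t']\cdot\beta$ analogously, assuming without loss of generality $t\le t'$ (the opposite case follows by reversing the whole path, which preserves being a quasi-geodesic). The cases where both parameters avoid crossing from $\alpha$ to $\beta$ reduce to part (i): the $\alpha$/$\gamma$ mixed case is verbatim, since $\gamma(t)$ is still closest to $p$ on the subsegment, and the $\gamma$/$\beta$ mixed case is its mirror image using that $\gamma(t')$ is closest to $q$. The genuinely new case is $s_1$ on $\alpha$ and $s_2$ on $\beta$, and its upper bound is once more the triangle inequality, now through both $\gamma(t)$ and $\gamma(t')$.

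For the lower bound in that crossing case, set $A=d(c(s_1),\gamma(t))$ and $C=d(\gamma(t'),c(s_2))$, and note $A\le\abs{\alpha}$ and $C\le\abs{\beta}$. Here the gap hypothesis enters decisively: from $\abs{t-t'}\ge 3\lambda(\abs{\alpha}+\abs{\beta})$ I get $A+C\le\tfrac1{3\lambda}(t'-t)$, so the contributions of $\alpha$ and $\beta$ are negligible against the long middle segment. The reverse triangle inequality through $\gamma(t),\gamma(t')$ and the quasi-geodesic lower bound on $\gamma$ then give $d(c(s_1),c(s_2))\ge\tfrac1\lambda(t'-t)-\eps-(A+C)\ge\tfrac2{3\lambda}(t'-t)-\eps$. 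Since $s_2-s_1=A+(t'-t)+C\le 2(t'-t)$, this is at least $\tfrac1{3\lambda}(s_2-s_1)-\eps$, which completes the verification and the proof.
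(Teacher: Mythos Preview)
Your proof is correct and, for part (ii), follows essentially the same route as the paper: reduce to the crossing case $s_1\in\alpha$, $s_2\in\beta$, use the triangle inequality through $\gamma(t)$ and $\gamma(t')$ for the upper bound, and for the lower bound combine the reverse triangle inequality with the quasi-geodesic lower bound on $\gamma$ and the gap hypothesis $\abs{t-t'}\ge 3\lambda(\abs\alpha+\abs\beta)$ to absorb the endpoint contributions. Your chain $d\ge\tfrac{2}{3\lambda}(t'-t)-\eps\ge\tfrac{1}{3\lambda}(s_2-s_1)-\eps$ is exactly the paper's computation.

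The only real difference is in part (i): the paper simply invokes Lemma~2.5 of \cite{QR22}, whereas you give a self-contained argument via the two lower bounds $d\ge A$ (from the closest-point property) and $d\ge\tfrac{1}{\lambda}B-\eps-A$, combined by the convex combination $\tfrac{2}{3}\cdot(\text{first})+\tfrac{1}{3}\cdot(\text{second})$. This is a clean elementary proof of the cited lemma and makes the write-up independent of the external reference; the paper's approach is shorter but less self-contained.
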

\begin{remark}
The continuity assumption for $\gamma$ is only needed for the existence of the points $\gamma(t)$ and $\gamma(t^\prime)$.
\end{remark}

\begin{proof}
Statement i) follows directly (and is essentially equivalent to) Lemma 2.5 of \cite{QR22}.

Assume $\alpha : [0, S] \to X$ and $\beta : [0, S^\prime]\to X$. To show ii) we can use i) and thus the only thing left to prove ii) is showing that for $s\in [0, S]$ and $s^\prime\in [0, S^\prime]$ we have 
\begin{align}
    \frac{1}{3\lambda}(\abs{S-s} + \abs{t^\prime -t} + s^\prime) - \eps \leq d(\alpha(s), \beta(s^\prime))  \leq 3 \lambda (\abs{S-s} + \abs{t^\prime -t} + s^\prime) + \eps.
\end{align}
On one hand we have 
\begin{align}
    d(\alpha(s), \beta(s^\prime)) &\leq d (\alpha(s), \alpha(S)) + d ( \gamma(t), \gamma(t^\prime) ) + d (\alpha(0), \alpha(s^\prime))\\
    &\leq \lambda \abs{t - t^\prime} + \eps + \abs{S-s} + s^\prime.
\end{align}
On the other hand, we have 
\begin{align}
    d(\alpha(s), \beta(s^\prime)) &\geq   d ( \gamma(t), \gamma(t^\prime) ) - d (\alpha(s), \alpha(S))  - d (\alpha(0), \alpha(s^\prime))\\
    & \geq \frac{1}{\lambda} \abs{t-t^\prime} -\eps - (\abs{S-s} + s^\prime) \geq \frac{2}{3\lambda}\abs{t-t^\prime}- \eps\\
    &\geq \frac{1}{3\lambda}(\abs{S-s} + \abs{t^\prime -t} + s^\prime) - \eps.   
\end{align}
Together, this concludes the proof.
\end{proof}

The following two lemmas state what happens to the Morseness of a geodesic when concatenating two Morse geodesics or conversely looking at a subsegment of a Morse geodesic.

\begin{lemma} \label{morse_property:concatenation_of_geodesic}
Let $\xi: [0, \infty)\to X$ be a geodesic ray such that $\xi[0, T]$ is $M$-Morse and $\xi[T, \infty)$ is $M^\prime$-Morse for some Morse gauges $M$ and $M^\prime$. Then, $\xi$ is $M^{\prime\prime}$-Morse, where $M^{\prime\prime}$ depends only on $M$ and $M^\prime$.
\end{lemma}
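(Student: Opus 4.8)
The plan is to verify the Morse condition directly for $\xi$, reducing an arbitrary quasi-geodesic with endpoints on $\xi$ to two quasi-geodesics, one landing on the $M$-Morse initial segment and one on the $M'$-Morse terminal ray. Fix a $(\lambda,\epsilon)$-quasi-geodesic $\eta$ with endpoints $\xi(a)$ and $\xi(b)$; after reversing $\eta$ if necessary we may assume $a\le b$. If $b\le T$ then both endpoints lie on $\xi[0,T]$, so $\eta\subset\mathcal{N}_{M(\lambda,\epsilon)}(\xi)$, and symmetrically if $a\ge T$ we get $\eta\subset\mathcal{N}_{M'(\lambda,\epsilon)}(\xi)$. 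The only remaining case is $a\le T\le b$, i.e. the point $\xi(T)$ lies on the geodesic subsegment $\xi[a,b]$ joining the endpoints of $\eta$.

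For this case I would split $\eta$ at its closest point to $\xi(T)$. Writing $\eta:[0,S]\to X$ with $\eta(0)=\xi(a)$ and $\eta(S)=\xi(b)$, let $m=\eta(s_0)$ be a point of $\eta$ closest to $\xi(T)$ and let $c=[\xi(T),m]$ be a geodesic. By Lemma~\ref{quasi-geodesic}~i), applied with $p=\xi(T)$ and $\gamma=\eta$, the concatenation $c\cdot\eta[s_0,S]$ is a $(3\lambda,\epsilon)$-quasi-geodesic; its endpoints are $\xi(T)$ and $\xi(b)$, which both lie on the $M'$-Morse ray $\xi[T,\infty)$. Hence this quasi-geodesic, and in particular the subsegment $\eta[s_0,S]$, is contained in $\mathcal{N}_{M'(3\lambda,\epsilon)}(\xi[T,\infty))\subset\mathcal{N}_{M'(3\lambda,\epsilon)}(\xi)$. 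Applying the same argument to the reversed quasi-geodesic $s\mapsto\eta(S-s)$, whose closest point to $\xi(T)$ is again $m$, shows that the concatenation of $c$ with the reverse of $\eta[0,s_0]$ is a $(3\lambda,\epsilon)$-quasi-geodesic from $\xi(T)$ to $\xi(a)$, with both endpoints on the $M$-Morse segment $\xi[0,T]$, so that $\eta[0,s_0]\subset\mathcal{N}_{M(3\lambda,\epsilon)}(\xi)$. Combining the two halves gives $\eta\subset\mathcal{N}_{r}(\xi)$ with $r=\max\{M(3\lambda,\epsilon),M'(3\lambda,\epsilon)\}$.

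Taking the easy cases into account, this shows $\xi$ is $M''$-Morse for $M''(\lambda,\epsilon)=\max\{M(\lambda,\epsilon),M'(\lambda,\epsilon),M(3\lambda,\epsilon),M'(3\lambda,\epsilon)\}$, which depends only on $M$ and $M'$, as required. (I keep all four terms rather than invoking monotonicity, since the definition of a Morse gauge given above does not assume it.) The one technical point to address is that Lemma~\ref{quasi-geodesic} needs $\eta$ continuous in order for a closest point $m$ to $\xi(T)$ to exist; I would handle this by the standard reduction that it suffices to verify the Morse condition on continuous quasi-geodesics, since any $(\lambda,\epsilon)$-quasi-geodesic lies at bounded Hausdorff distance from a continuous $(\lambda',\epsilon')$-quasi-geodesic with the same endpoints and controlled constants, or alternatively by replacing $m$ with an approximate nearest point at the cost of an additive constant in the estimates of Lemma~\ref{quasi-geodesic}. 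I expect this continuity bookkeeping to be the only real obstacle; the geometric content is carried entirely by the closest-point splitting together with Lemma~\ref{quasi-geodesic}~i).
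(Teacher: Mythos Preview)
Your proposal is correct and follows essentially the same approach as the paper: both proofs split the quasi-geodesic at a closest point to $\xi(T)$, invoke Lemma~\ref{quasi-geodesic}~i) to obtain two $(3\lambda,\epsilon)$-quasi-geodesics each landing on one of the Morse pieces, and handle the continuity issue via the standard taming reduction (the paper cites Lemma~1.11 of \cite{martin2013hyperbolic} for this, cf.\ Remark~\ref{morse:continuity}). Your write-up is in fact a bit more careful with the explicit constant $M''$ and the case analysis than the paper's version.
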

\begin{proof}
Let $\gamma^\prime$ be a $(\lambda, \eps)$ -quasi-geodesic with endpoints on $\xi$. We have to show that $\gamma^\prime$ stays in a bounded distance $C^\prime$ from $\xi$, where $C$ depends only on $\lambda, \eps, M$ and $M^\prime$. By Lemma~1.11 of \cite{martin2013hyperbolic} there exists a continuous $(3\lambda, \eps)$-quasi-geodesic $\gamma : [0, S]\to X$ with the same endpoints as $\gamma^\prime$ that has Hausdorff distance at most $\lambda+\eps$ from $\gamma^\prime$. It is enough to show that $\gamma$ stays in a bounded distance $C$ from $\xi$, where $C$ depends only on $\lambda, \eps, M$ and $M^\prime$.

Let $\gamma(t)$ be a point on $\gamma$ closest to $\xi(T)$ and let $\alpha$ be a geodesic from $\xi(T)$ to $\gamma(t)$. By Lemma~\ref{quasi-geodesic}, the paths $\alpha \cdot \gamma[t, S]$ and $\alpha \cdot \gamma[0, t]$ are $(3\lambda, \eps)$-quasi-geodesics. More importantly, $\alpha \cdot \gamma[t, S]$ and $\alpha\cdot \gamma[s, 0]$ have both endpoints on either $\xi[0, T]$ or $\xi[T, \infty)$. The statement follows from the Morseness of $\xi[0, T]$ and $\xi[T, \infty)$. 
\end{proof}

\begin{remark}\label{morse:continuity}
The trick to tame $\gamma^\prime$ by approximating it by a continuous quasi-geodesic $\gamma$ can often be useful. In particular, when we want to show that a geodesic $\xi$ is $M$-Morse, where $M$ depends only on some set of parameters $\mathcal{A}$, it is enough to show that any continuous $(\lambda, \eps)$-quasi-geodesic with endpoints on $\xi$ is contained in a $C$ neighbourhood of $\xi$, where $C$ depends only on $\lambda, \eps$ and $\mathcal{A}$.
\end{remark}

\begin{lemma}\label{morse:subsegments}
Let $\xi :  I \to X$ be an $M$-Morse geodesic and $J\subset I$ any interval. Then, the geodesic $\xi\vert_J : J\to X$ is $M_S$-Morse, where $M_S$ depends only on $M$. 
\end{lemma}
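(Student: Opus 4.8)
The plan is to reduce the claim to a single neighbourhood estimate and then invoke Remark~\ref{morse:continuity}. Let $\eta$ be a $(\lambda,\eps)$-quasi-geodesic with endpoints on $\xi\vert_J$, say $\eta(0)=\xi(a)$ and $\eta(L)=\xi(b)$ with $a,b\in J$; reversing $\eta$ if necessary I assume $a\le b$, so that $[a,b]\subseteq J$ and hence $\xi([a,b])\subseteq\xi(J)$. It therefore suffices to produce a constant $C=C(\lambda,\eps,M)$ with $\eta\subseteq\mathcal{N}_C(\xi([a,b]))$, since Remark~\ref{morse:continuity} then delivers a gauge $M_S$ depending only on $M$. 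By that same remark I may assume $\eta\colon[0,L]\to X$ is continuous, at the cost of enlarging $\lambda,\eps$ in a controlled way. As $\xi$ is $M$-Morse and $\eta$ has both endpoints on $\xi$, we have $\eta\subseteq\mathcal{N}_m(\xi)$ with $m:=M(\lambda,\eps)$, so for each $s$ I may choose $t(s)$ with $d(\eta(s),\xi(t(s)))\le m$.

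The heart of the argument is to show that $\eta$ cannot overshoot the segment, that is, $t(s)\in[a-C',b+C']$ for some $C'=C'(\lambda,\eps,M)$. I would stress that the naive estimate using only the endpoints of $\eta$ fails here: it yields a bound on $t(s)-b$ that grows with the length $b-a$ of the segment. The correct idea is that $s\mapsto t(s)$ is a coarse path on $\R$ with controlled jumps, since for $\abs{s_1-s_2}\le 1$ one has $\abs{t(s_1)-t(s_2)}=d(\xi(t(s_1)),\xi(t(s_2)))\le d(\eta(s_1),\eta(s_2))+2m\le\lambda+\eps+2m=:\kappa$, and this path starts within $m$ of $a$ and ends at $b$ (as $\eta(L)=\xi(b)$). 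Suppose $t(s)>b$. If $b-a\le m$ the endpoint inequality already bounds $L$, hence $d(\eta(s),\xi(b))$, directly; otherwise $a+m<b$, and a discrete intermediate value argument on $[0,s]$ produces $s'\in[0,s]$ with $\abs{t(s')-b}\le\kappa$, so $d(\eta(s'),\xi(b))\le m+\kappa$. Since $\eta(L)=\xi(b)$ and $s'\le s\le L$, the quasi-geodesic inequality gives $\abs{L-s'}\le\lambda(m+\kappa+\eps)$, hence $\abs{L-s}\le\lambda(m+\kappa+\eps)$ and therefore $d(\eta(s),\xi(b))\le\lambda^2(m+\kappa+\eps)+\eps$. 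Combined with $t(s)-b=d(\xi(t(s)),\xi(b))\le m+d(\eta(s),\xi(b))$, this bounds the overshoot by a constant depending only on $\lambda,\eps,M$. The case $t(s)<a$ is symmetric, locating a return near $\xi(a)$ on the interval $[s,L]$.

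With $t(s)\in[a-C',b+C']$ established, every $\xi(t(s))$ lies within $C'$ of $\xi([a,b])$, so $d(\eta(s),\xi([a,b]))\le m+C'=:C$, giving $\eta\subseteq\mathcal{N}_C(\xi([a,b]))\subseteq\mathcal{N}_C(\xi\vert_J)$ with $C=C(\lambda,\eps,M)$; Remark~\ref{morse:continuity} then upgrades this to the assertion that $\xi\vert_J$ is $M_S$-Morse with $M_S$ depending only on $M$. I expect the main obstacle to be making the overshoot bound fully rigorous: the discrete intermediate value step and the coarse continuity of $s\mapsto t(s)$ are precisely where the easy endpoint-only estimate breaks down, and where the geodesic (not merely quasi-geodesic) nature of $\xi$ is essential, since it is the exact additivity $d(\xi(t(s)),\xi(b))=\abs{t(s)-b}$ that converts parameter control into a bound on the overshoot.
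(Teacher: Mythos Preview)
Your argument is correct. The overshoot bound via the coarse continuity of $s\mapsto t(s)$ and the discrete intermediate value step is exactly the right idea, and your bookkeeping goes through: the constant $C$ you obtain depends only on $\lambda,\eps,M$, so Remark~\ref{morse:continuity} finishes.

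The paper, however, takes a different route: its entire proof is the sentence ``This follows directly from Lemma~2.1 of \cite{Cor16}.'' That lemma of Cordes is essentially the statement that an $M$-Morse geodesic and a $(\lambda,\eps)$-quasi-geodesic with the same endpoints have Hausdorff distance bounded in terms of $M,\lambda,\eps$ alone, which immediately gives the subsegment claim. Your proof amounts to reproving this fact from scratch. The trade-off is the obvious one: your approach is self-contained and makes the mechanism (no overshoot past the endpoints) explicit, while the paper opts for brevity by outsourcing a standard Morse-geodesic estimate to the literature. Both yield the same quantitative conclusion.
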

\begin{proof} 
This follows directly from Lemma~2.1 of \cite{Cor16}.
\end{proof}

\subsection{Morse boundary}

As a set, the Morse boundary of $X$, denoted by $\m X$, is the set of equivalence classes of Morse geodesic rays, where two Morse geodesic rays are equivalent if they have bounded Hausdorff distance. Elements $z\in \m X$ are called Morse directions. For a Morse geodesic ray $\xi$ we denote its equivalence class by $\C{\xi}\in \m X$.

\begin{definition}[Realization]
Let $z\in \m X$ be a Morse direction. A geodesic ray $\gamma : [0, \infty) \to X $ with $\C{\gamma} = z$ and $\gamma(0) = e$  is called a realization of $z$. Similarly, for an element $x\in X$ a geodesic segment $\gamma$ starting at $e$ and ending in $x$ is called a realization of $x$.
\end{definition}

We denote by $\P{M}{X}\subset X$ the set of all points $x\in X$ that have an $M$-Morse realization and we say that points $x\in \P{M}{X}$ are $M$-Morse. Note that Remark~\ref{morse:finite} implies that $ X = \cup_{M\in \mathcal{M}} \P{M}{X}$. Similarly, we define the set of all $M$-Morse directions $\M{M}{X}$ as the set of all Morse directions $z\in\m X$ that have an $M$-Morse realization and call it the $M$-Morse boundary of $X$.

\subsubsection*{Notation.} Let $z\in \M{M}{X}$ and let $\xi$ be an $M$-Morse realization of $z$. We denote by $\G{M}{X}$ the set of all $M$-Morse geodesic rays in $X$ starting at $e$. For any positive $n$ we define
\begin{align}
    \oh{M}{n}{\xi} &= \{\eta \in \G{M}{X} \mid d(\eta(t), \xi(t)) < \delta_M \text{ for all } t\in [0, n]\}, \\
    \u{M}{n}{\xi} &= \{\C\eta \mid \eta \in \oh{M}{n}{\xi}\},\\
    \oh{M}{n}{z} & = \{ \eta \in \G{M}{X} \mid \eta \in \oh{M}{n}{\xi} \text{ for all $M$-Morse realizations $\xi$ of $z$}\},\\
    \u{M}{n}{z} &= \{\C\eta \mid \eta \in \oh{M}{n}{z}\}.
\end{align}
It is shown in \cite{Cor16} that $\{\u{M}{n}{z}\}_{n\in \N}$ is a fundamental system of neighbourhoods. It is clear that $\oh{M}{n}{\xi}\subset \oh{N}{n}{\xi}$ if $M\leq N$ as then $\G{M}{X}\subset \G{N}{X}$. The same holds for $\u{M}{n}{\xi}$. However, we have to be a bit more careful with $\oh{M}{n}{z}$ and $\u{M}{n}{z}$. This is because, if $M\leq N$ then $\G{M}{X}\subset \G{N}{X}$, but there could be more $N$-Morse realizations of $z$ than there are $M$-Morse realizations and thus more conditions to satisfy. 
\begin{remark}\label{inclusion_of_neighbourhoods}
For $n\geq 12 \delta_M$, we have $\oh{M}{n+4\delta_M}{\xi}\subset \oh{M}{n}{z}\subset \oh{M}{n}{\xi}$. This holds because for two $M$-Morse realizations $\xi, \zeta$ of $z$ we have $d(\xi(t), \zeta(t))<\delta_M$ by Corollary 2.6 of \cite{Cor16}. Thus if $\gamma\in \oh{M}{n+4\delta_M}{\xi}$, then by Corollary 2.6 we have $\gamma\in \oh{M}{n}{\zeta}$, which implies the statement.
\end{remark}

Remark \ref{inclusion_of_neighbourhoods} shows that if $M\leq N$, then $\oh{M}{n+4\delta_M}{z}\subset \oh{N}{n}{z}$ (and thus the same holds for $U$).  

\begin{remark}
Corollary 2.5 of \cite{Cor16} shows that for two $M$-Morse realizations $\xi$ and $\zeta$ of some Morse direction $z$ we have $\xi\in \oh{M}{n}{\zeta}$ for all $n$. Thus $\xi\in \oh{M}{n}{z}$ which implies that $z\in \u{M}{n}{z}$.
\end{remark}

Sometimes, we also want to make statements about finite geodesics that start at $e$ and not only about geodesic rays. Therefore, we want to extend the notation to include finite geodesics. Let $x\in \P{M}{X}$
and let $\zeta$ be an $M$-Morse realization of $x$. For $n\leq d(e, x)$ we define the sets $\oh{M}{n}{\zeta}, \oh{M}{n}{x}, \u{M}{n}{\zeta}$ and $\u{M}{n}{x}$ as above. Furthermore, we say $\GG{M}{X}$ is the set of all $M$-Morse geodesics in $X$ (both rays and finite ones) starting at $e$. For an $M$-Morse realization $\xi$ (either of a point $x\in X$ or of an $M$-Morse direction $x\in \M{M}{X}$) and some positive $n$ (that, if $\xi$ is a realization of a point $x\in X$, has to satisfy $n\leq d(e, x)$) we define 
\begin{align*}
    \Oh{M}{n}{\xi} = \{\eta \in \GG{M}{X} \mid d(\eta(t), \xi(t)) < \delta_M \text{ for all } t\in [0, n]\}.
\end{align*}
The sets $\Oh{M}{n}{x}$, $\U{M}{n}{\xi}$ and $\U{M}{n}{x}$ can be defined analogously. We call these sets filled neighbourhoods.
\begin{remark}
The observation in Remark \ref{inclusion_of_neighbourhoods} also holds for the newly defined sets.
\end{remark}

The $M$-Morse boundary inherits properties of Gromov boundary of hyperbolic spaces due to the following homeomorphism proven in \cite{CH17}.

\begin{theorem}[Thm 3.14 of \cite{CH17}] If $X$ is a proper geodesic metric space, then there is a homeomorphism between $\partial \P{M}{X}$ and $\M{M}{X}$.
\end{theorem}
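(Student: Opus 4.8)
The plan is to equip $\P{M}{X}$ with the metric induced from $X$, show it is Gromov hyperbolic with constant depending only on $M$, and then identify its Gromov boundary $\partial\P{M}{X}$ with $\M{M}{X}$ by sending the class of an $M$-Morse ray to the point at infinity it determines. Throughout, the crucial point is that every estimate can be made to depend only on $M$.

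First I would establish that $\P{M}{X}$ is uniformly hyperbolic. Given $x,y\in\P{M}{X}$ with $M$-Morse realizations $\gamma_x,\gamma_y$ (geodesics from $e$), the triangle Lemma~\ref{triangle} applied to $e,x,y$ shows that any geodesic $[x,y]$ of $X$ is $N'$-Morse with $N'$ depending only on $M$. Uniform Morseness of all such geodesics forces their triangles to be uniformly thin, which yields the Gromov four-point inequality on $\P{M}{X}$ with constant $\delta=\delta(M)$; hence the sequential Gromov boundary $\partial\P{M}{X}$ is well defined.

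Next I would build the map $\Phi\colon\M{M}{X}\to\partial\P{M}{X}$. For $z\in\M{M}{X}$ pick an $M$-Morse realization $\xi\in\G{M}{X}$; each initial segment $\xi[0,t]$ is uniformly Morse (Lemma~\ref{morse:subsegments}), so $\xi$ is a ray in $\P{M}{X}$ and determines a boundary point $\Phi(z)$. This is well defined because two realizations of $z$ have bounded Hausdorff distance and hence fellow-travel (Corollary~\ref{cor2.5_2.6_cor16}), converging to the same point at infinity; the same corollary gives injectivity, since non-equivalent $M$-Morse rays diverge. For surjectivity I would use properness of $X$: a point of $\partial\P{M}{X}$ is represented by a sequence $(x_n)$ in $\P{M}{X}$ whose Gromov products tend to infinity, and choosing $M$-Morse realizations $\gamma_n$ from $e$ to $x_n$, Arzel\`a--Ascoli extracts a limit geodesic ray $\xi$; a locally uniform limit of $M$-Morse geodesics is again $M$-Morse, and $\xi$ represents the given boundary point.

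Finally I would match the two topologies. In $\P{M}{X}$ two $M$-Morse rays have large Gromov product at $e$ precisely when they $\delta_M$-fellow-travel up to a large time $n$, which is exactly the defining condition of $\oh{M}{n}{z}$; thus the Gromov-product neighborhoods of $\Phi(z)$ correspond to the basic neighborhoods $\u{M}{n}{z}$, and $\Phi$ is a homeomorphism. I expect the main obstacle to be the control of Morse gauges: passing to geodesics between points that only converge at infinity degrades the gauge from $M$ to $N'$, and taking limits re-introduces the subsegment gauge, so the real work is to show that all of these constants, and the resulting hyperbolicity and topology-matching estimates, can be bounded in terms of $M$ alone.
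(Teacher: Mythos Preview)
The paper does not prove this statement at all: it is quoted verbatim as Theorem~3.14 of \cite{CH17}, and the only accompanying remark is that hyperbolicity of $\P{M}{X}$ is Theorem~A~III) of the same reference. So there is no in-paper proof to compare against; the result is used as a black box to import compactness and Hausdorffness of $\M{M}{X}$.

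Your sketch is a plausible outline of how the Cordes--Hume argument goes, and the overall strategy (uniform Morseness of sides of triangles $\Rightarrow$ uniform thinness $\Rightarrow$ four-point hyperbolicity; then identify rays with boundary points via fellow-travelling and Arzel\`a--Ascoli) is correct in spirit. Two technical points are worth flagging. First, $\P{M}{X}$ with the restricted metric is not a geodesic space, so its Gromov boundary has to be treated via Gromov products and sequences, not geodesic rays; your map $\Phi$ should therefore be phrased as sending $z$ to the equivalence class of the sequence $(\xi(n))_n$, and thinness of triangles in $X$ with vertices in $\P{M}{X}$ must be converted into a four-point inequality for the restricted metric. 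Second, Lemma~\ref{morse:subsegments} only gives that $\xi[0,t]$ is $M_S$-Morse, not $M$-Morse, so a priori $\xi(t)\in\P{M_S}{X}$ rather than $\P{M}{X}$; this is exactly the gauge-bookkeeping issue you anticipate at the end, and resolving it cleanly is part of the actual work in \cite{CH17}.
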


Here, $\partial \P{M}{X}$ denotes the Gromov boundary of $\P{M}{X}$. This is well defined as in Theorem A III) of \cite{CH17} Cordes and Hume prove that $\P{M}{X}$ is hyperbolic. In \cite{martin2013hyperbolic} III), several properties of the Gromov boundary are shown. Important for us is that the Gromov boundary is compact and metrizable and thus, in particular, Hausdorff. With the homeomorphism above, we get that the $M$-Morse boundary of a proper geodesic metric space is compact and Hausdorff. 

Now we are ready to define a topology on the Morse boundary $\m X$. Namely, consider $\m X$ as the direct limit of the $M$-Morse boundaries $\M{M}{X}$. More precisely: 
\begin{align}
    \m X = \lim_{\xrightarrow[\mathcal{M}]{}} \M{M}{X},
\end{align}
with inclusions 
\begin{align}
    i_{M,N} : \M{M}{X} \to \M{N}{X},
\end{align}
for all $M\leq N$ defined by the canonical inclusion $\M{M}{X}\subset\M{N}{X}$. Note that those inclusions are continuous, since, for any $M$-Morse direction $z$, we have $\u{M}{n+4\delta_M}{z}\subset \u{N}{n}{z}$.

Now $\M{M}{X}$ can be viewed both on its own with the topology induced by the fundamental system of neighbourhoods $\{\u{M}{n}{z}\}_{n\in \N}$ or as a subset of $\m{X}$ with the subspace topology. We show that those two topologies coincide.
\begin{lemma}
The topology on $\M{M}{X}$ induced by the fundamental system of neighbourhoods $\{\u{M}{n}{z}\}_{n\in \N}$ coincides with the subspace topology of $\M{X}{M}\subset \m X$.
\end{lemma}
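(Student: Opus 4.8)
The plan is to prove the two inclusions of topologies separately. Write $\tau_M$ for the intrinsic topology on $\M{M}{X}$ generated by the fundamental system $\{\u{M}{n}{z}\}_{n\in\N}$, and $\tau_{\mathrm{sub}}$ for the subspace topology coming from $\m X$. The inclusion $\tau_{\mathrm{sub}}\subseteq\tau_M$ is essentially free: by construction of the direct limit topology the canonical injection $i_M:\M{M}{X}\to\m X$ is continuous, so for any direct-limit-open $V\subseteq\m X$ the preimage $i_M^{-1}(V)=V\cap\M{M}{X}$ is $\tau_M$-open; since every $\tau_{\mathrm{sub}}$-open set has exactly this form, every $\tau_{\mathrm{sub}}$-open set is $\tau_M$-open.

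The substance is the reverse inclusion $\tau_M\subseteq\tau_{\mathrm{sub}}$. Passing to complements inside $\M{M}{X}$, it suffices to show that every $\tau_M$-closed set $C\subseteq\M{M}{X}$ has the form $D\cap\M{M}{X}$ for some direct-limit-closed $D\subseteq\m X$. I would simply take $D=C$, regarded as a subset of $\m X$ via $i_M$, and prove that $C$ itself is closed in the direct limit topology. By definition of that topology this amounts to verifying that $C\cap\M{N}{X}$ is $\tau_N$-closed in $\M{N}{X}$ for \emph{every} Morse gauge $N$.

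Here is where I would invoke that each $\M{N}{X}$ is compact and Hausdorff (from Theorem 3.14 of \cite{CH17} together with compactness of the Gromov boundary of $\P{N}{X}$). First, $C$ is $\tau_M$-closed in the compact space $(\M{M}{X},\tau_M)$, hence $\tau_M$-compact. For $N\geq M$ we have $C\cap\M{N}{X}=C$, and since the inclusion $(\M{M}{X},\tau_M)\hookrightarrow(\M{N}{X},\tau_N)$ is continuous, $C$ is $\tau_N$-compact, hence $\tau_N$-closed because $\M{N}{X}$ is Hausdorff. The main obstacle is a gauge $N$ not comparable to $M$: here I would exploit that $\mathcal{M}$ is directed, setting $L=\max\{M,N\}$ (the pointwise maximum, again a Morse gauge, with $\M{M}{X}\subseteq\M{L}{X}$). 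The previous step shows $C$ is $\tau_L$-closed in $\M{L}{X}$; then continuity of the inclusion $(\M{N}{X},\tau_N)\hookrightarrow(\M{L}{X},\tau_L)$ gives that $C\cap\M{N}{X}$, being the preimage of the closed set $C$, is $\tau_N$-closed. This handles every $N$, so $C$ is direct-limit-closed, and $W:=\M{M}{X}\setminus C$ is the trace on $\M{M}{X}$ of the direct-limit-open set $\m X\setminus C$, proving $W\in\tau_{\mathrm{sub}}$.

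The only genuinely delicate point is the bookkeeping with incomparable gauges in the last step; everything else reduces to the standard facts that continuous images of compact sets are compact and that compact subsets of Hausdorff spaces are closed. I would isolate the compactness–closedness deduction as a one-line observation and then apply it once for $N\geq M$ and, via the directedness trick through $L=\max\{M,N\}$, for arbitrary $N$. Notably this argument never requires $\m X$ itself to be Hausdorff, only the individual strata $\M{N}{X}$, which is what makes it go through.
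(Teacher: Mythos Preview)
Your proof is correct and follows essentially the same route as the paper: both directions are argued via closed sets, the easy direction uses continuity of the canonical inclusion into the direct limit, and the harder direction uses compactness of $\M{M}{X}$ together with Hausdorffness of the strata, handling incomparable gauges by passing to $\max\{M,N\}$. The paper's proof is slightly terser but the argument is identical in substance.
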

\begin{proof}
To distinguish the two topologies on $\M{M}{X}$ we denote $\M{M}{X}$ with the topology induced by the neighbourhood basis by $Y$ and $\M{M}{X}\subset \m{X}$ with the subspace topology by $Z$. By definition of the direct limit topology, the inclusion $i : Y \to \m{X}$ is continuous and thus if $C\subset Z$ is closed, so is $C\subset Y$.

Conversely, let $C\subset Y$ be closed. Let $N\geq M$ be a Morse gauge. Then, as argued above, the inclusion $i_{M, N} : Y\to \M{N}{X}$ is continuous. As $Y$ and $\M{N}{X}$ are compact and Hausdorff, $i_{M, N}(C) = C\subset \M{N}{X}$ is closed. Now, let $N$ be any Morse gauge and $N^\prime = \max\{N, M\}$. Then $C\cap \M{N}{X}=i_{N, N^\prime}^{-1}(C)$ is closed. This concludes the proof, as it shows that $C\subset \m X$ is closed and thus also $C\subset Z$ is closed. 
\end{proof}

The Morse boundary is the direct limit of compact sets. However, a priori, as it is the direct limit of uncountably many compact sets it might not even be $\sigma$-compact. However, in \cite{CD16} they characterized compact subsets of $\m X$: 

\begin{lemma}[Lemma 4.1 of \cite{CD16}]\label{morse:compact_subsets} If $K\subset \m X$ is non-empty and compact, then there exists $M>0$ such that $K\subset \M{M}{X}$.
\end{lemma}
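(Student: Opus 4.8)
The plan is to argue by contradiction, exploiting that the Morse boundary carries the direct limit topology over the directed set $(\mathcal{M}, \leq)$. Suppose $K \subset \m X$ is non-empty and compact but is \emph{not} contained in any single $\M{M}{X}$. Since $X = \bigcup_{M \in \mathcal{M}} \P{M}{X}$ and correspondingly $\m X = \bigcup_{M} \M{M}{X}$, every point of $K$ lies in some stratum; the failure to be contained in one stratum means that no single gauge absorbs all of $K$. The goal is to manufacture from this an infinite closed discrete subset of $K$, contradicting compactness.

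First I would extract a sequence of points $z_1, z_2, \dots \in K$ together with Morse gauges $M_1 \leq M_2 \leq \dots$ witnessing an honest escape: concretely, since no one gauge suffices, I can choose $z_n \in K$ and an increasing cofinal sequence of gauges $M_n$ so that $z_n \in \M{M_n}{X}$ but $z_n \notin \M{M_{n-1}}{X}$ (arranging $M_n$ to dominate the gauges already used, using that the pointwise maximum of two gauges is again a gauge). The key structural step is then to show that the set $S = \{z_n\}$ is closed in $\m X$. By the definition of the direct limit topology, $S$ is closed in $\m X$ if and only if $S \cap \M{N}{X}$ is closed in $\M{N}{X}$ for every gauge $N$. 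For a fixed $N$, only finitely many of the $z_n$ can lie in $\M{N}{X}$: once $M_{n-1} \geq N$ (which happens for all large $n$ by cofinality), the point $z_n$ being outside $\M{M_{n-1}}{X} \supseteq \M{N}{X}$ forces $z_n \notin \M{N}{X}$. Hence $S \cap \M{N}{X}$ is finite, and finite subsets of the Hausdorff space $\M{N}{X}$ are closed. Therefore $S$ is closed.

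The same finiteness argument upgrades this to discreteness: any subset of $S$ is closed in $\m X$ by the identical reasoning (it too meets each $\M{N}{X}$ in a finite, hence closed, set), so $S$ is a closed discrete subspace of $\m X$. An infinite closed discrete subset of the compact set $K$ is impossible, giving the contradiction.

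The main obstacle I anticipate is the first step, namely justifying that the failure to be contained in a single stratum actually yields a sequence escaping \emph{cofinally} through the gauges rather than merely oscillating among finitely many strata. This requires care because $(\mathcal{M}, \leq)$ is only a directed partial order, not a total order, so I cannot simply pick "the next largest gauge." The clean way around this is to observe that if $K$ met only strata below some fixed bound in the directed sense it would be contained in that bounding stratum (since $\M{M}{X} \subset \M{N}{X}$ for $M \leq N$); so the negation genuinely produces points whose minimal gauges are cofinal, and replacing each such gauge by the running maximum of those seen so far furnishes the increasing cofinal sequence $M_n$ needed above. Once that sequence is in hand, the topological contradiction via the direct limit characterization of closed sets is routine.
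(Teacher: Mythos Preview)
The paper does not prove this lemma at all; it merely cites it as Lemma~4.1 of \cite{CD16}. So there is no ``paper's own proof'' to compare against, and your proposal should be judged on its own merits.

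Your argument has a genuine gap at exactly the point you flagged. You need a countable increasing sequence $(M_n)$ of Morse gauges that is \emph{cofinal} in $(\mathcal{M},\leq)$, so that every gauge $N$ is eventually dominated by some $M_n$ and hence $S\cap\M{N}{X}$ is finite. But with the paper's definition (a Morse gauge is \emph{any} map $\R_{\geq 1}\times\R_{\geq 0}\to\R_{\geq 0}$ under the pointwise order), no such sequence exists: given any countable family $\{M_n\}$, the diagonal gauge $N(\lambda,\eps)=M_{\lfloor\lambda\rfloor}(\lambda,\eps)+1$ satisfies $N(n,0)>M_n(n,0)$, so no $M_n$ dominates $N$. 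The paper itself alludes to this obstruction just before stating the lemma (``a priori \dots\ it might not even be $\sigma$-compact'').

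Your proposed workaround conflates two different notions. From ``$K$ is not contained in any $\M{M}{X}$'' you correctly get, for each $M$, a point $z\in K$ with $z\notin\M{M}{X}$. This says the minimal gauge $M_z$ of that point satisfies $M_z\not\leq M$. It does \emph{not} say $M_z\geq M$, which is what cofinality of the minimal gauges would require; in a non-total order these are very different statements. So ``the negation genuinely produces points whose minimal gauges are cofinal'' is not justified, and taking running maxima of the gauges you do obtain only produces an increasing sequence, not a cofinal one. Consequently, for a gauge $N$ incomparable to all your $M_n$, there is no reason $S\cap\M{N}{X}$ should be finite (or even closed), and the closed--discrete contradiction does not go through. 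A correct proof must use more than the bare order structure of $\mathcal{M}$; the argument in \cite{CD16} brings in the geometry of geodesic rays rather than relying on a purely order-theoretic escape.
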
 

In fact, it is proven in \cite{CD16} that if the Morse boundary of a group is non-empty and compact, then the group is hyperbolic. 

\begin{corollary}[Corollary of Theorem 1.1 of \cite{CD16}]\label{cor:compact_implies_hyperbolic}
Let $G$ be a finitely generated infinite group. The Morse boundary of $G$ is non-empty and compact if and only if $G$ is hyperbolic.
\end{corollary}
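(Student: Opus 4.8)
The plan is to prove both directions of the biconditional, relying on the two preceding results in the excerpt. For the easy direction, suppose $G$ is hyperbolic. Then every geodesic ray is uniformly Morse, so $\m G$ coincides with the Gromov boundary $\partial G$. Since $G$ is infinite and hyperbolic, $\partial G$ is non-empty (an infinite hyperbolic group contains geodesic rays) and compact (the Gromov boundary of a proper hyperbolic space is compact). Concretely, I would observe that there is a single Morse gauge $M$ such that $G = \P{M}{G}$, whence $\m G = \M{M}{G}$, and the latter is compact and Hausdorff by the homeomorphism with $\partial\P{M}{G}$ cited above Lemma~\ref{morse:compact_subsets}. Non-emptiness follows because an infinite finitely generated group has an infinite geodesic ray in its Cayley graph, which is automatically Morse in the hyperbolic setting.

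For the converse, suppose $\m G$ is non-empty and compact. I would apply Lemma~\ref{morse:compact_subsets} to the compact set $K = \m G$ itself: there exists a Morse gauge $M$ with $\m G = \M{M}{G}$. The goal is then to conclude that $G$ is hyperbolic. This is exactly the content invoked by the statement's attribution: it is a ``Corollary of Theorem 1.1 of \cite{CD16}.'' Theorem~1.1 of \cite{CD16} states that a group is hyperbolic if and only if its Morse boundary equals its entire (ordinary) boundary, or equivalently that the Morse directions are uniformly Morse; having $\m G = \M{M}{G}$ for a single $M$ says precisely that all Morse directions share one gauge, which is the uniform-Morseness hypothesis feeding into that theorem.

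The main obstacle is the converse, and specifically bridging from ``$\m G$ lies in a single $\M{M}{G}$'' to ``$G$ is hyperbolic.'' The subtle point is that a single gauge controlling the \emph{boundary} directions must be upgraded to uniform control of \emph{all} geodesic rays in the Cayley graph, so that the whole space is hyperbolic rather than merely its Morse boundary being small. I expect this step to be handled entirely by citing Theorem~1.1 of \cite{CD16} rather than reproved here; the work of the corollary is to package the compactness hypothesis into the form that theorem requires, via Lemma~\ref{morse:compact_subsets}. One should also verify the non-degeneracy bookkeeping: the hypothesis that $G$ is infinite is needed to exclude the trivial case where $\m G$ is empty, and to ensure that in the hyperbolic case the boundary is genuinely non-empty. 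I would therefore structure the final proof as two short paragraphs—one per implication—with the forward direction using standard hyperbolic-boundary facts and the reverse direction combining Lemma~\ref{morse:compact_subsets} with the cited Theorem~1.1.
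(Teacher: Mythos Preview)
Your forward direction is fine and matches the paper's implicit reasoning. The problem is in the converse: you have misidentified what Theorem~1.1 of \cite{CD16} actually says. It is not a statement of the form ``uniformly Morse boundary implies hyperbolic.'' As the paper records, Theorem~1.1 of \cite{CD16} asserts that a subgroup $H$ of a finitely generated group $G$ is \emph{boundary convex cocompact} if and only if $H$ is \emph{stable} in $G$. The paper's proof simply specialises this to $H=G$ and then checks two facts about the definitions: (i) $G$ is stable in itself if and only if $G$ is hyperbolic, and (ii) $G$ is boundary convex cocompact in itself if and only if $\m G$ is non-empty and compact. Both directions of the corollary then drop out at once from the single equivalence.

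By contrast, your route goes through Lemma~\ref{morse:compact_subsets} to obtain $\m G = \M{M}{G}$ for a single gauge $M$, and then you hope Theorem~1.1 converts this into hyperbolicity. That hope is not fulfilled by the theorem as stated: knowing that every \emph{Morse} ray is $M$-Morse says nothing a priori about non-Morse rays, which is exactly the upgrade you yourself flag as subtle. The paper sidesteps this entirely by working with the boundary-convex-cocompact/stable formalism, where the compactness hypothesis and the hyperbolicity conclusion are each directly equivalent to one side of the cited equivalence. So your detour through Lemma~\ref{morse:compact_subsets} is unnecessary, and the black box you invoke to finish is not the black box that is actually available.
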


\begin{proof}
Theorem 1.1. of \cite{CD16} states that a subgroup $H$ of a finitely generated group $G$ is boundary convex cocompact if and only if it is stable in $G$. One can readily see that the group $G$ is stable in itself if and only if it is hyperbolic. Furthermore, $G$ viewed as a subgroup of itself is boundary convex cocompact if and only if the Morse boundary of $G$ is compact and non-empty.

Hence, the corollary follows by applying Theorem 1.1 of \cite{CD16} with $ H = G$.  
\end{proof}

One implication of Corollary \ref{cor:compact_implies_hyperbolic} is the following. 

\begin{lemma}\label{lemma:not_one}
Let $G$ be a finitely generated group. Either $\m G = \emptyset$ or $\abs{\m G}\geq 2$.
\end{lemma}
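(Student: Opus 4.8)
The plan is to argue by contradiction and reduce to the hyperbolic case, where the claim is classical. Suppose that $\m G$ is non-empty but consists of exactly one point. Since a finite group has bounded Cayley graph and hence admits no geodesic rays, the hypothesis $\m G \neq \emptyset$ already forces $G$ to be infinite. Moreover, a one-point space is compact, so under the assumption $\abs{\m G}=1$ the whole Morse boundary $\m G$ is non-empty and compact. Corollary~\ref{cor:compact_implies_hyperbolic} then applies and shows that $G$ is hyperbolic.

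For a hyperbolic group the Morse boundary coincides with the Gromov boundary $\partial G$, so it remains to rule out $\abs{\partial G} = 1$ for an infinite hyperbolic group. Here I would invoke the fact that an infinite hyperbolic group contains an element $g$ of infinite order (a hyperbolic group all of whose elements have finite order is finite). Such a $g$ acts as a loxodromic isometry on the Cayley graph of $G$, and the orbit $n \mapsto g^n$ traces a bi-infinite quasi-geodesic axis whose two ends determine distinct points $g^{+\infty}, g^{-\infty} \in \partial G$. Hence $\abs{\partial G} \geq 2$, contradicting $\abs{\m G} = \abs{\partial G} = 1$. This contradiction shows that the case $\abs{\m G} = 1$ cannot occur, which is exactly the statement.

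The decisive step is the reduction via Corollary~\ref{cor:compact_implies_hyperbolic}: once a singleton Morse boundary is seen to force $G$ to be hyperbolic, the remainder is standard hyperbolic geometry. The only genuine inputs beyond the corollary are the identification $\m G = \partial G$ in the hyperbolic case and the existence of an infinite-order element; equivalently, one may instead cite that the Gromov boundary of a hyperbolic group has $0$, $2$, or infinitely many points, never exactly one. I expect the main difficulty to be purely expository rather than mathematical, namely ensuring that the compactness of a one-point set is correctly matched with the hypotheses of the corollary (non-emptiness together with $G$ infinite), after which the argument closes immediately.
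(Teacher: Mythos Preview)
Your proposal is correct and follows essentially the same route as the paper: argue by contradiction, observe that a singleton is compact, apply Corollary~\ref{cor:compact_implies_hyperbolic} to force $G$ hyperbolic, identify $\m G$ with $\partial G$, and then use the classical fact that $\abs{\partial G}\neq 1$. The paper cites \cite{ghys1990espaces} for that last fact where you sketch the loxodromic argument, and you are slightly more careful in noting that $\m G\neq\emptyset$ forces $G$ to be infinite (so that the corollary applies), but these are cosmetic differences.
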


\begin{proof}
Assume by contradiction that $\abs{\m G} = 1$. Since $\m G$ is finite, it is compact. Hence $G$ is hyperbolic, which implies that the Morse boundary of $G$ coincides with the Gromov boundary of $G$ by \cite{CH17}. Proposition 7.15 of \cite{ghys1990espaces} shows that $\abs{\partial G}\neq 1$ for any hyperbolic group $G$. This is a contradiction to the assumption $\abs{\m G} = 1$. 
\end{proof}

Next, we prove two technical lemmas about the Morse boundary. The first one is about the Morseness of a realization of an $M$-Morse direction - a priori this might have nothing to do with the Morse gauge $M$. The second one is about properties of the neighbourhoods defined above. 

\begin{lemma}\label{morse:bound_on_realization}
Any realization $\xi$ of an $M$-Morse direction $z\in \M{M}{X}$ is $M_{\mathcal{C}}$-Morse, where $M_{\mathcal{C}}$ depends only on $M$.  
\end{lemma}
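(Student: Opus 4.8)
The plan is to compare $\xi$ with a genuine $M$-Morse realization of $z$, to show that the two rays are uniformly close with a closeness constant depending only on $M$, and then to deduce Morseness of $\xi$ from the stability statement in Lemma~\ref{lem_2.5_3}~i).

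First I would fix an $M$-Morse realization $\alpha$ of $z$, which exists by the definition of $z\in\M{M}{X}$, and record that $\alpha(0) = e = \xi(0)$. Since $\C{\xi} = \C{\alpha} = z$, the rays $\xi$ and $\alpha$ have finite Hausdorff distance, say at most $K_0$, where a priori $K_0$ may depend on $\xi$. The next step is to upgrade this Hausdorff bound to a synchronous bound: given $t$, choose $s$ with $d(\xi(t), \alpha(s))\leq K_0$; because $\xi$ and $\alpha$ are geodesics issuing from $e$ we have $d(e,\xi(t)) = t$ and $d(e,\alpha(s)) = s$, so $\abs{t-s}\leq d(\xi(t),\alpha(s))\leq K_0$, and the triangle inequality gives $d(\xi(t),\alpha(t))\leq 2K_0$. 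Hence there is a constant $K$, for instance $K = 2K_0+1$, with $d(\alpha(t),\xi(t)) < K$ for all $t$.

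The key point, and the only place where it matters that $\alpha$ is Morse, is to replace the $\xi$-dependent constant $2K_0$ by one depending only on $M$. For this I would invoke Corollary~\ref{cor2.5_2.6_cor16}~ii): $\alpha$ is $M$-Morse, $\xi$ is a geodesic ray with $d(\alpha(t),\xi(t)) < K$ for all $t$, and $\alpha(0) = \xi(0)$, so the ``in particular'' clause yields $d(\alpha(t),\xi(t)) < \delta_M$ for all $t$. In particular the Hausdorff distance between $\xi$ and $\alpha$ is at most $\delta_M$, a quantity depending only on $M$.

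Finally, I would conclude with Lemma~\ref{lem_2.5_3}~i): $\alpha$ is an $M$-Morse quasi-geodesic and $\xi$ is a $(1,0)$-quasi-geodesic whose Hausdorff distance from $\alpha$ is at most $C = \delta_M$, so $\xi$ is $M_{\mathcal{C}}$-Morse with $M_{\mathcal{C}}$ depending only on $M$ and $\delta_M$, hence only on $M$. I expect the only real obstacle to be the upgrade carried out in the previous paragraph: bounded Hausdorff distance by itself gives, through Lemma~\ref{lem_2.5_3}~i), a Morse gauge depending on the uncontrolled constant $K_0$, so the argument genuinely needs the Morse ray $\alpha$ together with the synchronisation trick and Corollary~\ref{cor2.5_2.6_cor16}~ii) to make the constant uniform in $M$.
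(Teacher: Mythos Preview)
Your proof is correct and follows essentially the same route as the paper: take an $M$-Morse realization, apply Corollary~\ref{cor2.5_2.6_cor16}~ii) to obtain $d(\alpha(t),\xi(t))<\delta_M$ for all $t$, and then conclude with Lemma~\ref{lem_2.5_3}~i). The only difference is that you spell out the synchronisation step (passing from a Hausdorff bound to a pointwise bound $d(\alpha(t),\xi(t))<K$) which the paper leaves implicit in its citation of Cor~2.6 of \cite{Cor16}.
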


\begin{proof}
Let $\zeta$ be an $M$-Morse realization of $z$. Cor 2.6 of \cite{Cor16} implies that $d(\zeta(t), \xi(t))< \delta_M$ for all $t\in [0, \infty)$. Lemma~2.5 (i) of \cite{Charney_2014} concludes the proof.
\end{proof}

\begin{lemma}\label{morse_property:open_set_in_open_set}
Let $z\in \M{M}{X}$ be an $M$-Morse direction. For any integer $l\in \N$ there exists a positive integer $k$ such that for all $y\in \U{M}{k}{z}$ we have $\U{M}{k}{y}\subset\U{M}{l}{z}$.
\end{lemma}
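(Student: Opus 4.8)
The plan is to show that a witnessing geodesic for a point of $\U{M}{k}{y}$ is, up to a controlled loss of constant, also a witness for membership in $\U{M}{l}{z}$. First I would fix an $M$-Morse realization $\xi$ of $z$ and choose $k$ to be any integer with $k\geq \max\{l+4\delta_M,\ 12\delta_M\}$. I will run the argument for this fixed $\xi$; since all the estimates below are uniform over the realizations of $z$, the conclusion will then hold for $\Oh{M}{l}{z}$ as required.

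Let $y\in\U{M}{k}{z}$ and pick $\eta\in\Oh{M}{k}{z}$ with $\C\eta=y$. Note $\eta\in\GG{M}{X}$ starts at $e$ and has class $y$, so it is itself an $M$-Morse realization of $y$; moreover $d(\eta(t),\xi(t))<\delta_M$ for all $t\in[0,k]$ by definition of $\Oh{M}{k}{z}$. Now take any $w\in\U{M}{k}{y}$ witnessed by $\mu\in\Oh{M}{k}{y}$ with $\C\mu=w$. Since $\eta$ is one of the $M$-Morse realizations of $y$, the defining condition of $\Oh{M}{k}{y}$ gives $d(\mu(t),\eta(t))<\delta_M$ for all $t\in[0,k]$. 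The triangle inequality then yields $d(\mu(t),\xi(t))<2\delta_M$ for all $t\in[0,k]$.

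The key step, and the only real obstacle, is that this naive bound is $2\delta_M$, whereas membership in $\U{M}{l}{z}$ demands closeness $\delta_M$. To recover the sharper constant I would apply Corollary~\ref{cor2.5_2.6_cor16}~i) with $\alpha=\xi$ (so $N=M$), $\beta=\mu$, $K=2\delta_M$ and $D=k$: both geodesics start at $e$, and $k\geq 6K=12\delta_M$, so the corollary improves the estimate to $d(\mu(t),\xi(t))<\delta_M$ on $[0,k-2K]=[0,k-4\delta_M]\supseteq[0,l]$. Thus $\mu\in\Oh{M}{l}{\xi}$, and as $\xi$ was an arbitrary realization of $z$ (the estimate being uniform in $\xi$, since $\eta\in\Oh{M}{k}{z}$ controls the distance to \emph{every} realization of $z$) we conclude $\mu\in\Oh{M}{l}{z}$, i.e. $w=\C\mu\in\U{M}{l}{z}$. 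This is exactly why the buffer $4\delta_M$ and the threshold $12\delta_M$ must be built into the choice of $k$.

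Finally, I would address the case distinction between boundary directions and points. When $y$ or $w$ is a point of $\P{M}{X}$ rather than a Morse direction, the witnesses $\eta,\mu$ are finite geodesics; but $\U{M}{k}{y}$ and $\Oh{M}{k}{y}$ are only defined when the relevant realizations have length at least $k$, so each witness is defined on all of $[0,k]$. Since Corollary~\ref{cor2.5_2.6_cor16}~i) only inspects the geodesics on the interval $[0,k]$, the argument above goes through unchanged in these cases as well.
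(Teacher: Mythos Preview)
Your proof is correct and follows essentially the same route as the paper: you choose $k=\max\{l+4\delta_M,12\delta_M\}$, use the triangle inequality on the two witnessing geodesics to get a $2\delta_M$ bound on $[0,k]$, and then apply Corollary~\ref{cor2.5_2.6_cor16}~i) to sharpen this to $\delta_M$ on $[0,l]$. The paper's proof is identical in structure and constants, only with slightly terser handling of the uniformity over realizations of $z$.
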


\begin{remark} If we would allow a dependency of $k$ on $y$, then the lemma is a direct consequence of the $\U{M}{l}{z}$ being a fundamental system of neighbourhoods. The crucial part of this lemma is that there exists a uniform $k$, that is, there exists a choice of $k$ that works for all $y\in \U{M}{k}{z}$.
\end{remark}

\begin{proof}
Set $k = \max \{l + 2K, 6K\}$, where $K= 2\delta_M$. Let $\xi$ be an $M$-Morse realization of $z$, let $y\in \U{M}{k}{z}$, and let $\eta\in \Oh{M}{k}{z}$ be a realization of $y$. Furthermore, let $\lambda\in \Oh{M}{k}{y}$. We have that $d(\xi(t), \eta(t)) < \delta_M$ and $d(\eta(t), \lambda(t))<\delta_M$ for all $t\in[0, k]$ and hence $d(\xi(t), \lambda(t))<2\delta_M$ for all $t\in [0,k]$. Using Cor~2.5 of \cite{Cor16} we get that $d(\xi(t), \lambda(t))<\delta_M$ for all $t\in [0, l]$. As this is true for any $M$-Morse realization $\xi$ of $z$ we have $\lambda\in \Oh{M}{l}{z}$ and thus $\C\lambda\in \U{M}{l}{z}\subset U$.
\end{proof}

\subsection{Morse geodesic lines} \label{section:morse_geodesic_lines}

Let $A$ be a finitely generated group with neutral element $e$. For any finite set of generators $S\subset A$, the Cayley-graph $\Gamma(A, S)$ is a proper geodesic metric space. Furthermore, for two finite generating sets $S_1$ and $S_2$, the two Cayley graphs $\Gamma(A, S_1)$ and $\Gamma(A, S_2)$ are quasi-isometric. In \cite{Cor16} it is proven that the Morse boundary is a quasi-isometry invariant. Therefore, we can define the Morse boundary of $A$ as the Morse boundary of $\Gamma(A, S)$ for any finite generating set $S$ and the homeomorphism type of $\m A$ does not depend on the choice of $S$. Sometimes, by abuse of notation, we will implicitly identify $A$ with its Cayley graph, assuming that we fixed a set of generators $S$. The base point of $A$ (resp its Cayley graph) will be assumed to be its neutral element $e$.

\subsubsection*{Group action on the Morse boundary} The left action of $A$ by isometries on its Cayley graph induces a group action of $A$ on the Morse boundary by $a \cdot \C\gamma = \C{a\cdot \gamma}$. Also observe that if $\gamma$ is $M$-Morse, so is $a\cdot \gamma$ (but not necessarily $\C{a\cdot \gamma}$).

In the following, we assume $\abs{\m A}\geq 2$. Then, we fix a Morse geodesic line going through $e$, denoting it by $\lambda(A)$ and denote by $M_0$ a Morse gauge such that $\lambda(A)$ is $M_0$-Morse.

\begin{remark} Note that by a geodesic line, we mean a bi-infinite geodesic. Such a geodesic line $\lambda(A)$ exists: The Morse boundary is a visibility space (see \cite{Cor16}) and thus having $\abs{\m A}\geq 2$ implies that there exists a Morse geodesic line $\tilde{\lambda}$ going through some point $x \in A$. Then $x^{-1}\cdot \tilde{\lambda}$ is a Morse geodesic line going through the base point $e$.
\end{remark}

For any element $x\in A$ we construct a Morse geodesic $\lambda_x$ corresponding to $x$ as follows: Let $\tilde{\lambda}$ denote the $M_0$-Morse geodesic line $x \cdot \lambda (A)$ and let $x_0\in A$ be a point on $\tilde{\lambda}$ closest to $e$. 
We may (and will) assume that $\tilde{\lambda}(0) = x_0$ (if not reparametrize $\tilde\lambda$) and that the integer $t_x$ such that  $\tilde{\lambda} (t_x) = x$ is non-negative (if not replace $\tilde{\lambda}$ with its mirrored geodesic line, i.e. the geodesic line $\tilde{\lambda}^{\prime}$ that satisfies $\tilde{\lambda}^{\prime}(t)= \tilde{\lambda}(-t)$ for all $t\in \R$ ).
Define $\lambda_x$ to be any realization of $\C{\tilde{\lambda}[0, \infty)}$ and call it the geodesic ray corresponding to $x$. The construction of $\lambda_x$ is depicted in Figure \ref{picture:def_of_lambda_x}.

\begin{figure}\centering
\includegraphics[width= \linewidth]{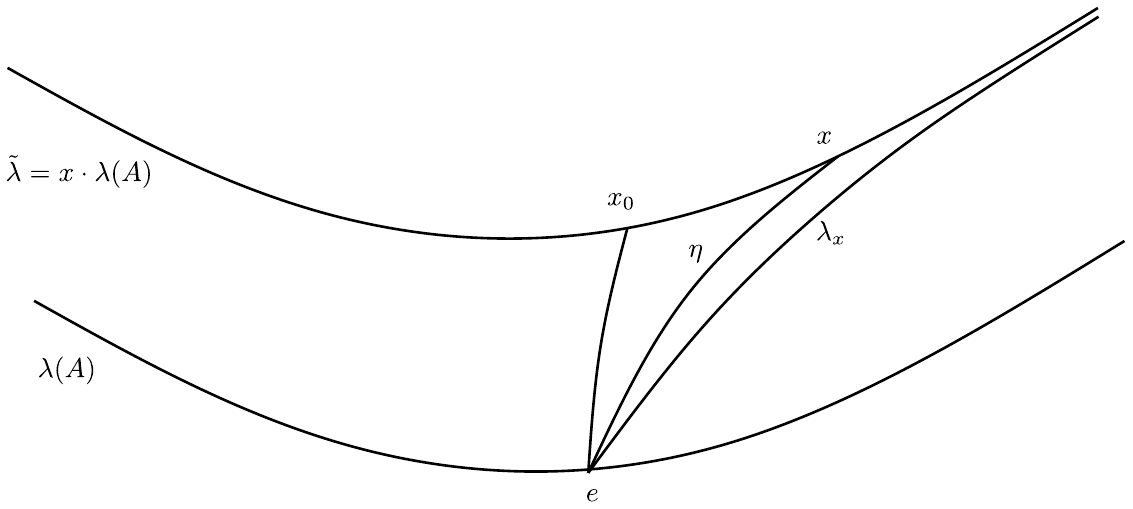}
\caption{Construction of the to $x$ corresponding geodesic ray $\lambda_x$.}
\label{picture:def_of_lambda_x}
\end{figure}

Intuitively speaking, the Morseness of $\lambda(A)$ gives us a base Morseness $M_0$ and $\lambda_x$ captures the Morse properties of $x$ up to the base Morseness. The lemma below states this more precisely:

\begin{lemma} \label{map:morseline:porperties}
Let $x\in A$ and let $\eta$ be an $M$-Morse realization of $x$. The following hold:
\begin{enumerate}[label = \roman*)]
\item The geodesic ray $\lambda_x$ corresponding to $x$ is $M_{\lambda}$ -Morse, where $M_{\lambda}$ depends only on $M$ and $M_0$.\label{map:morseline:properties:1}
\item If $\lambda_x$ is $N$-Morse, then $x\in\P{N^\prime}{X}$, where $N^\prime$ depends only on $N$ and $M_0$. \label{map:morseline:properties:4}
\item For any positive integer $T$, there exists a constant $C_{M}(T)$, depending only on $M$, $M_0$ and $T$, such that if $d(e, x)\geq C_M(T)$, then $d(\lambda_x(t), \eta(t)) < \delta_M$ for all $t\in [0, T]$. \label{map:morseline:properties:2}
\item There exists an integer $T_1$ such that $\lambda_x([T_1, \infty))$  is $M_1$-Morse, where $M_1$ depends only on $M_0$. \label{map:morseline:properties:3}
\end{enumerate}
\end{lemma}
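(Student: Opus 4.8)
The plan is to realise $\lambda_x$ as one side of an ideal geodesic triangle and read i) and ii) straight off the triangle lemma, then treat iii) as the main technical point and iv) as an asymptotic statement. Write $\xi := \C{\tilde{\lambda}[0,\infty)}\in\m X$ for the common endpoint of $\lambda_x$ and $\tilde{\lambda}[0,\infty)$, and $\mu := \tilde{\lambda}[t_x,\infty)$. As $\mu$ is a subray of the $M_0$-Morse line $\tilde{\lambda}$, Lemma~\ref{morse:subsegments} shows $\mu$ is $M_0'$-Morse with $M_0'$ depending only on $M_0$; moreover $\mu$ is a geodesic ray from $x$ to $\xi$, $\eta$ a geodesic from $e$ to $x$, and $\lambda_x$ a geodesic ray from $e$ to $\xi$, so $e,x,\xi$ span a triangle with sides $\eta$, $\mu$, $\lambda_x$. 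For \ref{map:morseline:properties:1}, apply Lemma~\ref{triangle} at the apex $x$: the sides $[x,e]=\eta$ and $[x,\xi]=\mu$ are $N$-Morse for $N:=\max\{M,M_0'\}$, so $[e,\xi]=\lambda_x$ is $M_\lambda$-Morse with $M_\lambda$ depending only on $N$, hence only on $M,M_0$. For \ref{map:morseline:properties:4}, apply Lemma~\ref{triangle} at the apex $\xi$: if $[\xi,e]=\lambda_x$ is $N$-Morse and $[\xi,x]=\mu$ is $M_0'$-Morse, then $[e,x]$ is $N'$-Morse with $N'$ depending only on $N$ and $M_0$; any such $[e,x]$ is an $N'$-Morse realization of $x$, so $x\in\P{N'}{X}$.

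The crux of \ref{map:morseline:properties:2} is the uniform bound $d(x,\mathrm{im}(\lambda_x))\le K_0$. Let $x_0=\tilde{\lambda}(0)$ be the closest point of $\tilde{\lambda}$ to $e$ and set $\rho:=[e,x_0]\cdot\tilde{\lambda}[0,\infty)$; by Lemma~\ref{quasi-geodesic}~i) this is a $(3,0)$-quasi-geodesic ray from $e$ to $\xi$ passing through $x$. For large $R$ let $\rho(R')$ be a closest point of $\rho$ to $\lambda_x(R)$; by Lemma~\ref{quasi-geodesic}~i) again, $\rho[0,R']\cdot[\rho(R'),\lambda_x(R)]$ is a $(9,0)$-quasi-geodesic whose endpoints $e$ and $\lambda_x(R)$ both lie on $\lambda_x$. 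Since $\lambda_x$ is $M_\lambda$-Morse, this quasi-geodesic lies in $\mathcal{N}_{M_\lambda(9,0)}(\lambda_x)$; taking $R$ large enough that $x$ precedes $\rho(R')$ on $\rho$ gives $d(x,\mathrm{im}(\lambda_x))\le K_0:=M_\lambda(9,0)$, a bound depending only on $M,M_0$. Morally this just says the quasi-geodesic $\rho$, which shares both endpoints with the Morse geodesic $\lambda_x$, fellow-travels it; the detour through $\lambda_x(R)$ only serves to handle the ideal endpoint $\xi$.

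Now set $N:=\max\{M,M_\lambda\}$, so $\eta$ and $\lambda_x$ are $N$-Morse and both start at $e$. Since $\eta(d(e,x))=x$ satisfies $d(\eta(d(e,x)),\mathrm{im}(\lambda_x))\le K_0$, Lemma~2.7 of \cite{Cor16} yields $d(\eta(t),\lambda_x(t))\le 8N(3,0)=:K_1$ for all $t<d(e,x)-K_0-4N(3,0)=:D'$. Finally apply Corollary~\ref{cor2.5_2.6_cor16}~i) with $\alpha=\eta$ (which is $M$-Morse, so the conclusion is in terms of $\delta_M$) and $\beta=\lambda_x$: provided $D'\ge 6K_1$ and $D'-2K_1\ge T$, we get $d(\lambda_x(t),\eta(t))<\delta_M$ for all $t\in[0,T]$. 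Both conditions hold once $d(e,x)\ge C_M(T):=T+8K_1+K_0+4N(3,0)$, which depends only on $M,M_0,T$, as required.

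For \ref{map:morseline:properties:3} I would argue using only $M_0$, so as to avoid any dependence on $M$. Since $\lambda_x$ and $\tilde{\lambda}[0,\infty)$ represent the same point $\xi$, their Hausdorff distance $B$ is finite. For far apart $t_1<t_2$, project $\lambda_x(t_i)$ to nearest points $\tilde{\lambda}(\psi(t_i))$; as the gap $|\psi(t_1)-\psi(t_2)|$ exceeds $3(d_1+d_2)$ with $d_i\le B$, Lemma~\ref{quasi-geodesic}~ii) makes $[\tilde{\lambda}(\psi(t_1)),\lambda_x(t_1)]\cdot\lambda_x[t_1,t_2]\cdot[\lambda_x(t_2),\tilde{\lambda}(\psi(t_2))]$ a $(3,0)$-quasi-geodesic with endpoints on $\tilde{\lambda}$, so by the $M_0'$-Morseness of $\tilde{\lambda}$ it stays in $\mathcal{N}_{c_0}(\tilde{\lambda})$ with $c_0=M_0'(3,0)$; hence $\lambda_x[t_1,t_2]\subset\mathcal{N}_{c_0}(\tilde{\lambda})$. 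Thus there is $T_1$ (depending on $x$ through $B$, which is allowed) with $\lambda_x[T_1,\infty)\subset\mathcal{N}_{c_0}(\tilde{\lambda})$; coarse surjectivity of the projection gives the reverse inclusion $\tilde{\lambda}[r_1,\infty)\subset\mathcal{N}_{3c_0+1}(\lambda_x[T_1,\infty))$ for suitable $r_1$, so $\lambda_x[T_1,\infty)$ has Hausdorff distance depending only on $M_0$ from a subray of $\tilde{\lambda}$, and Lemma~\ref{lem_2.5_3}~i) yields that it is $M_1$-Morse with $M_1$ depending only on $M_0$. The main obstacle is the uniform bound $d(x,\mathrm{im}(\lambda_x))\le K_0$ in \ref{map:morseline:properties:2}: everything else is bookkeeping of Morse gauges, whereas there one must genuinely exploit that $\lambda_x$ and the concrete quasi-geodesic $\rho$ through $x$ share both endpoints and deal with the ideal endpoint $\xi$ by truncation.
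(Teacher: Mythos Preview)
Your proof is correct and follows essentially the same strategy as the paper: the triangle lemma for i) and ii), a quasi-geodesic through $x$ with endpoints on $\lambda_x$ to obtain the uniform bound $d(x,\mathrm{im}(\lambda_x))\le K_0$ for iii), and eventual fellow-travel of $\lambda_x$ with $\tilde\lambda$ for iv). The only cosmetic differences are that you use the cleaner triangle $(e,x,\xi)$ with side $\mu=\tilde\lambda[t_x,\infty)$ rather than the paper's $(\eta,\tilde\lambda[0,\infty),\lambda_x)$, and in iii)/iv) your projections go in the opposite direction (yielding $(9,0)$ instead of $(3,0)$ in iii)); neither changes the substance of the argument.
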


\begin{remark}\label{lemma1.16_direct}
Lemma~\ref{map:morseline:porperties} implies that if $d(e, x)\geq C_M(T)$, then $x$ is in $\U{M_\lambda}{T}{\lambda_x}$ and $\lambda_x$ is in $\u{M_\lambda}{T}{x}$. Together with  Remark~\ref{inclusion_of_neighbourhoods} we also have that if $d(e, x) \geq C_M(T + 4\delta_{M_\lambda})$, then $x$ is in $\U{M_\lambda}{T}{\C{\lambda_x}}$.
\end{remark}
\begin{proof}
The statements i) and ii) follow from Lemma \ref{triangle}, because $\Delta = \left(\eta,  \tilde{\lambda} [0, \infty), \lambda_x\right)$ forms a generalized triangle. 

For any $t\in [0, \infty)$ let $\tilde{\lambda}(s_t)$ be a closest point on $\tilde{\lambda}$ to $\lambda_x(t)$. Since $\tilde{\lambda}[0, \infty)$ and $\lambda_x$ represent the same Morse direction, they have finite Hausdorff distance $D$. In particular, $d(\tilde{\lambda}(s_t), \lambda_x(t)) \leq D$ for all $t\in [0, \infty)$. Hence, Lemma \ref{quasi-geodesic} ii) implies that for large enough $t$, the concatenation $[e, x_0][\tilde{\lambda}[0, s_t][\tilde{\lambda}(s_t),\lambda_x(t)]$ is a $(3, 0)$-quasi-geodesic. Since, for large enough $t$, $x$ lies on this concatenation $d(x, \lambda_x) \leq M_\lambda(3, 0)< \delta_{M_\lambda}$. Thus, as $\eta$ and $\lambda_x$ are geodesics, we have $d(\eta(t), \lambda_x(t))<3\delta_{M_\lambda}$ for all $t\in [0, d(e, x)]$. Set $C_M(T)= \max\{18\delta_{M_\lambda}, T+ 6 \delta_{M_\lambda}\}$ and assume $d(e, x)\geq C_M(T)$. According to Corollary 2.5 of \cite{Cor16}, we have $d(\eta(t), \lambda_x(t))<\delta_M$ for all $t\in [0, T]$. This concludes the proof of iii).

For any $s\in [0, \infty)$ let $\lambda_x(t_s)$ be the point on $\lambda_x$ closest to $\tilde{\lambda}(s)$. Define 
\begin{align*}
    \gamma_s = [\tilde{\lambda}(0), \lambda_x(t_0)]\cdot \lambda_x[t_0, t_s]\cdot[\lambda_x(t_s), \tilde{\lambda}(s)].
\end{align*}
By Lemma~\ref{quasi-geodesic} the path $\gamma_t$ is a $(3, 0)$-quasi-geodesic for large enough $s$. Increasing $s$ we have, by Lemma~2.1 of \cite{Cor16}, that
\begin{align*}
    d_H([\tilde{\lambda}(0), \lambda_x(t_0)]\lambda_x[t_0, \infty), \tilde{\lambda}[0, \infty) )\leq 2 M_0(3, 0).
\end{align*}
Choose $T_1, T_2$ large enough such that $d(\lambda_x(T_1), \tilde{\lambda}(T_2))\leq 2 M_0(3, 0)$, then by Cor 2.6 of \cite{Cor16} we have $d(\lambda_x(T_1+s), \tilde{\lambda}(T_2+s))\leq 2M_0(3, 0) +\delta_{M_0}$. So the Hausdorff distance between $\lambda_x[T_1, \infty)$ and $\tilde{\lambda}[T_2, \infty)$ is bounded by some constant depending only on $M_0$. Lemma 2.5 of \cite{Charney_2014} together with Lemma~\ref{morse:subsegments} conclude the proof of iv). 
\end{proof}

\subsection{Homeomorphisms of Morse boundaries}\label{section:homeo_of_morse_boundaries} 

To prove Theorem \ref{freeproduct_theorem} we want to construct a homeomophism $\bar{p} : \m{(A_1*B_1)}\to \m {(A_2 *B_2)}$ given homeomorphisms $p_A : \m{A_1}\to \m {A_2}$ and $p_B : \m{B_1}\to \m{B_2}$. To construct the homeomorphism $\bar{p}: \m{(A_1*B_1)}\to \m {(A_2 *B_2)}$ we first construct bijections $\hat{p}_A : A_1\to A_2$ and $\hat{p}_B : B_1 \to B_2$ that induce the homeomorphisms $p_A$ and $p_B$. 

Hence, it is important to understand how homeomorphisms of Morse boundaries interact with the direct limit topology and how we can prove that a map between two Morse boundaries is continuous. Also, we will formalize what it means for a bijection of the groups to induce a homeomorphism of the boundaries. We start with describing some properties of homeomorphisms of the Morse boundray. 

Let $G$ and $H$ be finitely generated groups with basepoint $e$ and let $p: \m G \to \m H$ be any homeomorphism. Recall that $\mathcal{ M}$ denotes the set of all Morse gauges and note that by abuse of notation, we write $p(\xi)$ for $p(\C{\xi})$ for any geodesic ray $\xi$. The homeomorphism $p$ interacts nicely with the direct product structure of the Morse boundary since $p$ maps $M$-Morse boundaries into $M'$-Morse boundaries. More precisely, there exists a map $h: \mathcal{M}\to \mathcal{ M}$, such that for ever Morse gauge $M$ 
\begin{align}\label{eq:homoe_of_morse_strata}
    p(\M{M}{G})\subset \M{h(M)}{H} \qquad \text{and} \qquad  p^{-1}(\M{M}{H})\subset \M{h(M)}{G}.
\end{align}

The existence of such a map $h$ is due to the compactness of the $M$-Morse boundaries. More precisely, since $\M{M}{G}$ is compact, so is $p(\M{M}{G})$. Lemma~\ref{morse:compact_subsets} then implies that $p(\M{M}{G})\subset \M{N}{H}$ for some Morse gauge $N$. We can argue analogously for the inverse $p^{-1}$ and thus such a map $h$ exists.

Conversely, let $q: \m G\to \m H$ be a map which we want to prove that is continuous. As $\m G$ and $\m H$ are constructed via direct limits, it is enough to show that $q$ is continuous on every $M$-Morse boundary. More precisely, if there exists a map $h: \mathcal{M}\to \mathcal{M}$ such that $q\vert_{\M{M}{G}} : \M{M}{G}\to \M{h(M)}{H}$ is continuous for all $M$, then so is $q$ itself. The topology of $\M{M}{G}$ and $\M{h(M)}{H}$ are given by a fundamental system of neighbourhoods. Hence if we can prove that there exists a map $h: \mathcal M \to \mathcal M$ such that for every integer $k$ there exists an integer $l$ such that 
\begin{align}
    q(\u{M}{l}{x})\subset \u{h(M)}{k}{q(x)},
\end{align} 
then we have shown $q$ is continuous.

Next, we will define what it means for a bijection $\hat{p} : G\to H$ to induce a homeomorphism $p : \m G\to \m H$. To do so, we first need to define what it means for a sequence  $(x_n)_n$ of points in $G$ to converge to a Morse direction $z\in \m G$. Recall that the filled neighbourhood $\U{M}{k}{z}$ contains both Morse directions and elements of $G$.

\begin{definition}[Convergence] A sequence $(x_n)_n$ of $M$-Morse points in $G$ converges to a Morse direction $z\in \M{M}{G}$ if for every integer $k$ all but finitely many elements of the sequence $(x_n)_n$ lie in $\U{M}{k}{z}$.
\end{definition}

Note that there is a subtle difference between this definition of convergence for a sequence of points $(x_n)_n$ and the (uniform) convergence (on compact sets) of geodesics $([e, x_n])_n$, the latter one being strictly stronger. Namely, if the sequence of geodesics $([e, x_n])_n$ converges to some geodesic ray $\xi$, then the sequence of points $(x_n)_n$ converges to $[\xi]$. Conversely, if $(x_n)_n$ converges to some Morse direction $z$, then $([e, x_n]_n)$ might not converge. But, if $([e, x_n])_n$ (or any subsequence of it) converges to some geodesic ray $\xi$, then $[\xi] = z$. The reason for the possible lack of convergence of $([e, x_n])_n$ is that for geodesic rays to converge, they need to come uniformly close on compact sets for large $n$, where close means closer than $\eps$ for any choice of $\eps>0$. However, for the sequence $(x_n)_n$ to converge, it is enough that the geodesics in $([e, x_n])_n$ come $\delta_M$ close for large $n$. 

\begin{definition}
A bijection $\hat{p} : G\to H$ induces a homeomorphism $p : \m G\to \m H$ if the following holds. For any Morse gauge $M$ and any sequence $(x_n)_n$ (resp $(x_n')_n$) of $M$-Morse points in $G$ (resp in $H$) converging to some Morse direction $z$ (resp $z'$) we have that the sequence $(\hat{p}(x_n))_n$ (resp $(\hat{p}^{-1}(x'_n))_n$) converges to $p(z)$ (resp $p^{-1}(z')$).
\end{definition}

Lastly, we prove a technical lemma which will be used in Section \ref{chapter:map}. Let $p: \m G\to \m H$ be a homeomorphism and $h : \mathcal M\to \mathcal M$ a map such that \eqref{eq:homoe_of_morse_strata} holds. The lemma states that given Morse direction $z$, if you take a point $x$ far along the realization of $p(z)$, then $p^{-1}(\lambda_x)$ is close to $z$ for a long time. 

\begin{lemma} \label{homeo_properties:back_match_lambda_x}
Let $p: \m G\to \m H$ be a homeomorphism and $h : \mathcal M\to \mathcal M$ a map such that \eqref{eq:homoe_of_morse_strata} holds. Let $z\in \M{M}{G}$ be an $M$-Morse direction and $\eta$ be an $h(M)$-Morse realization of $p(z)$. For any positive integer $n$, there exists an integer $i_0$ such that 
\begin{align}\label{cond:far_along}
    p^{-1} (\lambda_{\eta(i)}) \in \U{h(h(M)_\lambda)}{n}{z},
\end{align}
for all $i\geq i_0$.
\end{lemma}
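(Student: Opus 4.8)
The plan is to show that the directions $\C{\lambda_{\eta(i)}}$ converge to $p(z)$ in $\m H$ as $i\to\infty$, and then transport this convergence through the continuous map $p^{-1}$. Throughout I assume, as is standard and harmless, that $h$ has been replaced by a monotone non-decreasing majorant with $M\le h(M)$ for all $M$, so that all the strata appearing below are nested as needed.

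First I would record the Morse-gauge bookkeeping. Since $z\in\M{M}{G}$, \eqref{eq:homoe_of_morse_strata} gives $p(z)\in\M{h(M)}{H}$, so the $h(M)$-Morse realization $\eta$ exists. For each $i$ the initial segment $\eta[0,i]$ is an $h(M)$-Morse realization of the point $\eta(i)$ (Lemma~\ref{morse:subsegments} bounds its gauge in terms of $h(M)$ alone), so Lemma~\ref{map:morseline:porperties}\,\ref{map:morseline:properties:1} shows $\lambda_{\eta(i)}$ is $h(M)_\lambda$-Morse and hence $\C{\lambda_{\eta(i)}}\in\M{h(M)_\lambda}{H}$. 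Applying \eqref{eq:homoe_of_morse_strata} to $p^{-1}$ then places $p^{-1}(\lambda_{\eta(i)})$ in $\M{h(h(M)_\lambda)}{G}$, while monotonicity of $h$ gives $z\in\M{M}{G}\subset\M{h(h(M)_\lambda)}{G}$. Thus the filled neighbourhood $\U{h(h(M)_\lambda)}{n}{z}$ in \eqref{cond:far_along} is well defined and both $z$ and all the relevant directions live in this one stratum.

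The heart of the argument is to prove $\C{\lambda_{\eta(i)}}\to p(z)$ inside the compact, metrizable stratum $\M{h(M)_\lambda}{H}$. Fix $k$. By Lemma~\ref{map:morseline:porperties}\,\ref{map:morseline:properties:2}, applied to $x=\eta(i)$ with realization $\eta[0,i]$, there is a constant $C_{h(M)}(T)$ such that $i\ge C_{h(M)}(T)$ forces $d(\lambda_{\eta(i)}(t),\eta(t))<\delta_{h(M)}$ for all $t\in[0,T]$. This only compares $\lambda_{\eta(i)}$ with the single realization $\eta$, whereas membership in $\u{h(M)_\lambda}{k}{p(z)}$ demands $\delta_{h(M)_\lambda}$-closeness to \emph{every} $h(M)_\lambda$-Morse realization $\xi$ of $p(z)$; this uniformity is the main obstacle. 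To overcome it, note that $\eta$ is also $h(M)_\lambda$-Morse, so Corollary~\ref{cor2.5_2.6_cor16} ii) gives $d(\eta(t),\xi(t))<\delta_{h(M)_\lambda}$ for all $t$ and all such $\xi$, whence $\lambda_{\eta(i)}$ and $\xi$ are $(\delta_{h(M)}+\delta_{h(M)_\lambda})$-close on $[0,T]$. Choosing $T=k+2(\delta_{h(M)}+\delta_{h(M)_\lambda})$ large enough and feeding this into Corollary~\ref{cor2.5_2.6_cor16} i) (both rays start at $e$ and are $h(M)_\lambda$-Morse) improves the estimate to $d(\lambda_{\eta(i)}(t),\xi(t))<\delta_{h(M)_\lambda}$ on $[0,k]$, uniformly over $\xi$. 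Hence $\lambda_{\eta(i)}\in\oh{h(M)_\lambda}{k}{p(z)}$ and $\C{\lambda_{\eta(i)}}\in\u{h(M)_\lambda}{k}{p(z)}$ for all $i\ge C_{h(M)}(T)$, which is exactly convergence to $p(z)$.

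Finally I would push this convergence through $p^{-1}$. By \eqref{eq:homoe_of_morse_strata} and continuity of $p^{-1}$, the corestriction $p^{-1}\colon\M{h(M)_\lambda}{H}\to\M{h(h(M)_\lambda)}{G}$ is a continuous map between compact metrizable spaces, so $\C{\lambda_{\eta(i)}}\to p(z)$ yields $p^{-1}(\lambda_{\eta(i)})\to p^{-1}(p(z))=z$ in $\M{h(h(M)_\lambda)}{G}$. Since this stratum carries the fundamental system of neighbourhoods $\{\u{h(h(M)_\lambda)}{n}{z}\}_n$, and since for a Morse direction membership in the filled neighbourhood $\U{h(h(M)_\lambda)}{n}{z}$ agrees with membership in $\u{h(h(M)_\lambda)}{n}{z}$, the convergence produces, for the given $n$, an index $i_0$ with $p^{-1}(\lambda_{\eta(i)})\in\U{h(h(M)_\lambda)}{n}{z}$ for all $i\ge i_0$. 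This is precisely \eqref{cond:far_along}.
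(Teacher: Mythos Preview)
Your argument is correct and follows the same two-step strategy as the paper: first show that $\C{\lambda_{\eta(i)}}$ eventually lands in any given basic neighbourhood of $p(z)$ in the $h(M)_\lambda$-stratum, then pull this through the continuous restriction $p^{-1}\colon\M{h(M)_\lambda}{H}\to\M{h(h(M)_\lambda)}{G}$. The only real difference is packaging. The paper compresses your entire second paragraph into a single citation of Remark~\ref{lemma1.16_direct}: that remark already records that $d(e,x)\ge C_M(T+4\delta_{M_\lambda})$ forces $\lambda_x\in\oh{M_\lambda}{T}{z}$ (for $z=\C{\eta}$, not just for the single realization $\eta$), because Remark~\ref{inclusion_of_neighbourhoods} absorbs the passage from a fixed realization to all realizations via the $+4\delta$ shift. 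You instead redo this uniformity step by hand with Corollary~\ref{cor2.5_2.6_cor16}, which is fine but longer. One small technicality: your appeal to $\eta$ being $h(M)_\lambda$-Morse, and to $z$ lying in the $h(h(M)_\lambda)$-stratum, needs $M_\lambda\ge M$ as well as $h(M)\ge M$; both are harmless normalisations (one can always enlarge these gauges), but worth stating explicitly alongside your monotonicity assumption on $h$.
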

\begin{proof}
There exists an integer $k$ such that 
\begin{align}
    p^{-1}(\u{h(M)_\lambda}{k}{p(z)})\subset \u{h(h(M)_\lambda)}{n}{z}.\label{proof_of_3.1_eq1}
\end{align}
Choose $i_0 = C_{h(M)}(k + 4\delta_{h(M)_\lambda})$. For any $i\geq i_0$ the geodesic $\eta([0, i])$ is an $h(M)$-Morse realization of $\eta(i)$. Remark~\ref{lemma1.16_direct} shows that $\lambda_{\eta(i)}\in \oh{h(M)_\lambda}{k}{p(z)}$. The proof is concluded by \eqref{proof_of_3.1_eq1}.
\end{proof}

\section{The Morse boundary of the free product}\label{chapter:comb}

Let $A$ and $B$ be finitely generated groups. In this section, we give an alternative description of the Morse boundary of $A\ast B$, call it the combinatorial Morse boundary and show that it is homeomorphic to $\m{(A\ast B)}$. To do so, we fix Cayley graphs $\Gamma_A = \Gamma(A, S_A)$, $\Gamma_B = \Gamma(B, S_B)$ and $\Gamma = \Gamma(A\ast B, S_A\cup S_B)$ for some finite generating sets $S_A\subset A$ and $S_B\subset B$. With this notation, $\Gamma_A$ and $\Gamma_B$ can be isometrically embedded into $\Gamma$ using the inclusions $A\into A\ast B$ and $B\into A\ast B$. There is also a different way to think about $\Gamma$. Namely, let $Y_A = \Gamma_A\times (A*B)$ be a family of copies of $\Gamma_A$, where there is a copy for each element of $A*B$. Similarly, let $Y_B = \Gamma_B\times (A*B)$. Then $\Gamma$ is a quotient of $Y_A\sqcup Y_B$ under the following equivalence relation: for every pair of vertices $v\in \Gamma_A$ and $w\in \Gamma_B$ and pair elements $g, h\in A*B$ we have that $(v, g)\sim(w, h)$ if and only if $gv = hw$. With this definition we see that the inclusion $\Gamma_A\into\Gamma$ is given by $x\mapsto (x, 1)$.

\subsubsection*{Notation:} For any set $C\subset A\ast B$ we denote by $\Gamma[C]$ the graph that contains all vertices of $C$ and all edges that have at least one endpoint in $C$. 

A thorough discussion of amalgams and free products can be found in \cite{serre2012trees}. Important for us is that any element $w\in A\ast B$ has a unique reduced representation
\begin{align}
    w = a_1 b_2\cdot \ldots \cdot a_{k} b_{k},
\end{align}
where $k\geq 1, a_i\in A$ and $b_i\in B$ for all $1\leq i \leq k$ and furthermore $a_{i}\neq e$ for all $i>1$, and similarly $b_{i} \neq e$ for all $i<k$.  If $b_{k} = e$ we say the length of $w$ is $2k-1$, otherwise, we say the length of $w$ is $2k$. Let $w$ be an element of $A\ast B$. Assume that $w = a_1 b_1\cdot \ldots \cdot a_k b_k$ has length $l$ in its reduced representation. We denote by $\Gamma_w$ the graph $\Gamma\setminus\{w\}$ and we analyze its connected components. Let $C_w\subset A\ast B$ be the set of all elements whose reduced representations have length $\geq l+1$ and start with $a_1 b_1\cdot \ldots \cdot a_k b_k$. Let $\overline{C}_w = A\ast B\setminus (C_w \cup \{w\})$. Observe that there exists no edge between $C_w$ and $\overline{C}_w$. Therefore, any geodesic between $C_w$ and $\overline{C}_w$ goes through $w$. With the same argumentation, we get:

\begin{lemma}\label{combinatorial:geodesics}
Any path from $e$ to $w$ goes through $a_1, a_1b_1, \ldots ,a_1b_1\cdot\ldots \cdot b_{k-1}$ and $a_1b_1\cdot\ldots \cdot a_k$. 
\end{lemma}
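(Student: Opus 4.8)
The plan is to reduce everything to the separation property recorded just before the statement: since there is no edge of $\Gamma$ joining $C_v$ to $\overline{C}_v$, and every vertex other than $v$ lies in $C_v\cup\overline{C}_v$, deleting the single vertex $v$ disconnects these two sets. Consequently any path in $\Gamma$ from a vertex of $\overline{C}_v$ to a vertex of $C_v$ must pass through $v$. I would apply this observation to each of the prefixes appearing in the statement, one at a time.

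First I would fix notation: the listed elements are $v_1 = a_1$, $v_2 = a_1 b_1$, \ldots, $v_{2k-1} = a_1 b_1 \cdots a_k$, namely the proper initial products of the reduced word $w = a_1 b_1 \cdots a_k b_k$. Each such $v$ is itself reduced, because truncating a reduced word preserves the defining conditions $a_i \neq e$ for $i>1$ and $b_i \neq e$ for $i<k$; hence the reduced representation of $v$ is an initial segment of that of $w$. Then I would fix one such $v$ with $v \neq e$ and $v \neq w$, and write $\ell$ for its reduced length, noting that $\ell$ is strictly less than the length of $w$ since $v$ is a proper prefix. Now I check that $e$ and $w$ lie on opposite sides of the cut vertex $v$: as the length of $w$ is at least $\ell+1$ and the reduced representation of $w$ starts with that of $v$, we have $w \in C_v$; as $e$ has length $0 < \ell+1$ and $e \neq v$, we have $e \in \overline{C}_v$. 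The separation property then forces every path from $e$ to $w$ to pass through $v$, and letting $v$ range over $v_1, \ldots, v_{2k-1}$ yields the claim.

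The argument is essentially bookkeeping once the separation property is in hand, so I do not anticipate a genuine obstacle; the only care needed is with the two degenerate boundary cases, both of which are immediate rather than handled by the separation argument. If $a_1 = e$ then $v_1 = e$ is the initial point of the path, and if $b_k = e$ then $w = a_1 b_1 \cdots a_k = v_{2k-1}$ is the terminal point, so in either situation the relevant prefix lies on the path trivially.
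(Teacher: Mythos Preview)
Your proposal is correct and is essentially the same approach as the paper's: the paper does not give a separate proof of this lemma but simply says ``With the same argumentation, we get'' immediately after recording the separation property for $C_w$ and $\overline{C}_w$. You have written out exactly that argument in detail, including the harmless boundary cases $a_1=e$ and $b_k=e$.
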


Observe that for $a\neq a^\prime\in A$ the sets $C_a$ and $C_{a^\prime}$ are disjoint. Therefore, we can partition $A\ast B\setminus A$ into sets $\{C_a \}_{ a\in A}$. This gives rise to a projection $\tilde{\pi} : A\ast B \to A$ defined by $\tilde{\pi}(a) = a$ for all $a\in A$ and $\tilde{\pi}(w) = u$ for the unique $u\in A$ such that $w\in C_{u}$. The projection $\tilde\pi$ induces a projection $\pi : \Gamma \to \Gamma_A$ defined by $\pi(x) = x $ if $x\in \Gamma_A$ and $\pi (x ) = \tilde{\pi}(w)$ if $x$ is connected to $w \in A\ast B$ in $\Gamma \setminus \Gamma_A$. This is well defined: if $w$ and $v$ are connected in $\Gamma \setminus \Gamma_A$, then $\tilde{\pi}(w) = \tilde{\pi}(v)$.

\begin{lemma}
The projection $\pi$ as defined above is continuous.
\end{lemma}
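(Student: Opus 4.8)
The plan is to exploit that $\Gamma$ is a \emph{locally finite} graph: since $S_A$ and $S_B$ are finite, every vertex has finite degree, so the metric topology on $\Gamma$ coincides with its weak (CW) topology. Consequently a map out of $\Gamma$ is continuous as soon as its restriction to each closed edge is continuous. (Alternatively, and self-containedly: interior points of edges are handled at once, and at a vertex $v$ one uses that only finitely many edges are incident to $v$, so that continuity along each incident edge assembles into continuity at $v$.) This reduces the whole statement to understanding $\pi$ on a single closed edge, and I would split the edges of $\Gamma$ into two types according to whether or not they lie in $\Gamma_A$.

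First consider the edges of $\Gamma_A$. An edge of $\Gamma$ has both endpoints in $A$ if and only if it lies in $\Gamma_A$: a generator $s\in S_B$ joining $a,a'\in A$ would force $a^{-1}a'=s\in A\cap B=\{e\}$, which is impossible for a generator. On such an edge $\pi$ is the identity, hence continuous. Second, take any closed edge $\bar e\not\subset\Gamma_A$ with endpoints $w,w'$. Its interior contains no vertices and lies on no $\Gamma_A$-edge, so it is a connected subset of $\Gamma\setminus\Gamma_A$ and therefore lies in a single connected component; by the well-definedness of $\pi$ already noted, $\pi$ is constant on this interior, with value $\tilde\pi(v)$ for any vertex $v$ of that component. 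The decisive step is to verify that \emph{both endpoints receive this same value}, so that $\pi|_{\bar e}$ is genuinely constant. If an endpoint lies outside $A$ it is connected to the interior and so belongs to the same component, and $\pi$ agrees there automatically. The only delicate case is an endpoint $a\in A$, i.e.\ an edge joining the central copy $\Gamma_A$ to a hanging branch; then the opposite endpoint is $as$ with $s\in S_B\setminus\{e\}$, and since the first $A$-syllable of $as$ is $a$ (so $as\in C_a$, directly from the definition of $C_a$) we get $\tilde\pi(as)=a=\pi(a)$. Thus the constant value matches $\pi(a)$, and $\pi|_{\bar e}$ is constant, hence continuous.

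Combining the two cases, $\pi$ is continuous on every closed edge, and by local finiteness this yields continuity of $\pi$ on all of $\Gamma$. I expect the main obstacle to be precisely this interface check in the second case: confirming that the value $\pi$ takes on a branch of $\Gamma\setminus\Gamma_A$ coincides with the image of the vertex $a\in A$ to which the branch is attached. This is where the identification of $\tilde\pi$ with projection onto the first $A$-syllable (supported by the geodesic structure in Lemma~\ref{combinatorial:geodesics}) does the work, and where the degenerate syllable $a=e$ — for which an edge from $e$ to $s\in S_B$ lands in $C_e$ and still satisfies $\tilde\pi(s)=e$ — must be treated separately.
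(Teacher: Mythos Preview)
Your proof is correct but follows a genuinely different route from the paper's. The paper gives a direct $\epsilon$--$\delta$ argument: for $x\in\Gamma_A$ one takes $\delta=\epsilon$ and uses that any path from $x$ to a nearby point $y$ must pass through $\pi(y)$, giving $d(\pi(x),\pi(y))\le d(x,y)$; for $x\notin\Gamma_A$ one takes $\delta=d(x,\pi(x))$, so that the $\delta$-ball around $x$ stays inside a single component of $\Gamma\setminus\Gamma_A$, where $\pi$ is constant. By contrast you argue combinatorially, using local finiteness to reduce to closed edges and then checking that $\pi$ is the identity on edges of $\Gamma_A$ and constant on every other closed edge, with the interface case (an $S_B$-edge leaving a vertex $a\in A$) handled via $\tilde\pi(as)=a$. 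The paper's version is shorter and in fact yields the slightly stronger statement that $\pi$ is $1$-Lipschitz from points of $\Gamma_A$; your version makes the ``branches hang off $\Gamma_A$'' picture explicit and avoids invoking the path-through-$\pi(y)$ property, at the cost of more case analysis. Both are perfectly valid.
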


\begin{proof}
We have to prove that for all $x\in \Gamma$ and $\eps >0$ there exists $\delta>0$ such that if $d(x, y)<\delta$, then $d(\pi(x), \pi(y))<\eps$. If $x\in \Gamma_A$, choose $\delta = \eps$. This works as $\pi(x) = x$ and any path from $x$ to $y$ goes through $\pi(y)$. If $x\notin\Gamma_A$ choose $\delta = d(x, \pi(x))$. Any $y$ with $d(y, x)<\delta$ then satisfies that $y$ and $x$ are in the same connected component of $\Gamma \setminus \Gamma_A$ and thus $\pi(x) = \pi(y)$. 
\end{proof} 

\begin{lemma}\label{free_product:moving_geo_to_gamma_a}
Let $\gamma$ be a continuous $(\lambda, \eps)$-quasi-geodesic in $\Gamma$ with endpoints in $\Gamma_A$. Then $\pi\circ \gamma : [a, b]\to \Gamma_A$ is a continuous $(\lambda, \eps)$-quasi-geodesic with $d_H(\gamma, \pi\circ\gamma)\leq \lambda^2\eps+\eps$.   
\end{lemma}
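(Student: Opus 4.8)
The plan is to prove the three assertions—continuity, the quasi-geodesic estimate, and the Hausdorff bound—separately, the first being immediate and the other two resting on an analysis of the ``excursions'' of $\gamma$ away from $\Gamma_A$. Continuity of $\pi\circ\gamma$ is clear, since $\pi$ was just shown to be continuous and $\gamma$ is continuous by hypothesis.

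The structural input I would isolate first is that $\pi$ is distance non-increasing: $d(\pi(x),\pi(y))\le d(x,y)$ for all $x,y\in\Gamma$. This follows from the cut-vertex property behind the definition of $\pi$: if $\pi(x)\ne\pi(y)$ then, using that there is no edge between $C_w$ and $\overline{C}_w$ so that the attaching vertices $\pi(x),\pi(y)$ separate $x$ resp.\ $y$ from the rest of $\Gamma$, every geodesic from $x$ to $y$ passes through $\pi(x)$ and $\pi(y)$, whence $d(x,y)=d(x,\pi(x))+d(\pi(x),\pi(y))+d(\pi(y),y)\ge d(\pi(x),\pi(y))$; if $\pi(x)=\pi(y)$ there is nothing to prove. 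Applying this to $x=\gamma(s)$ and $y=\gamma(t)$ immediately yields the upper quasi-geodesic bound $d(\pi\gamma(s),\pi\gamma(t))\le d(\gamma(s),\gamma(t))\le\lambda\abs{s-t}+\eps$.

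Next I would set up the excursion decomposition. The set $T=\{t:\gamma(t)\in\Gamma_A\}$ is closed and contains the endpoints, so its complement is a disjoint union of open intervals $(s_i,t_i)\subset(a,b)$. On each such interval the connected set $\gamma((s_i,t_i))$ lies in a single connected component of $\Gamma\setminus\Gamma_A$, which attaches to $\Gamma_A$ at a unique vertex $v_i$; by continuity together with the cut-vertex property one gets $\gamma(s_i)=\gamma(t_i)=v_i$ and $\pi\circ\gamma\equiv v_i$ on $[s_i,t_i]$. Feeding $d(\gamma(s_i),\gamma(t_i))=0$ into the lower quasi-geodesic inequality for $\gamma$ bounds every excursion: $t_i-s_i\le\lambda\eps$. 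For the Hausdorff bound this already suffices, since if $\gamma(t)$ lies in the excursion $(s_i,t_i)$ then $d(\gamma(t),\pi\gamma(t))=d(\gamma(t),\gamma(s_i))\le\lambda(t-s_i)+\eps\le\lambda^2\eps+\eps$, while points with $\gamma(t)\in\Gamma_A$ project to themselves; as every point of $\pi\circ\gamma$ is some $\pi\gamma(t)$, this gives $d_H(\gamma,\pi\circ\gamma)\le\lambda^2\eps+\eps$.

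The delicate point, which I expect to be the main obstacle, is the lower quasi-geodesic bound for $\pi\circ\gamma$ with the \emph{same} constant $\eps$: a direct triangle-inequality comparison $d(\pi\gamma(s),\pi\gamma(t))\ge d(\gamma(s),\gamma(t))-d(\gamma(s),\pi\gamma(s))-d(\gamma(t),\pi\gamma(t))$ loses an additive error of order $\lambda^2\eps$ and is too lossy. The trick I would use is to replace $s,t$ by excursion endpoints at which $\gamma$ meets its projection exactly: let $s_i\le s$ be the left endpoint of the excursion containing $s$ (or $s_i=s$ if $\gamma(s)\in\Gamma_A$), and $t_j\ge t$ the right endpoint of the excursion containing $t$ (or $t_j=t$). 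Then $\gamma(s_i)=\pi\gamma(s)$ and $\gamma(t_j)=\pi\gamma(t)$ exactly, and $s_i\le s<t\le t_j$, so the lower quasi-geodesic inequality for $\gamma$ gives $d(\pi\gamma(s),\pi\gamma(t))=d(\gamma(s_i),\gamma(t_j))\ge\frac1\lambda(t_j-s_i)-\eps\ge\frac1\lambda\abs{t-s}-\eps$. This representative swap—extending the parameter interval rather than perturbing the points—is exactly what preserves the constant. Combined with the upper bound, $\pi\circ\gamma$ is a $(\lambda,\eps)$-quasi-geodesic, which together with the Hausdorff estimate completes the proof.
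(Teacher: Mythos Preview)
Your proof is correct and follows essentially the same approach as the paper: the upper bound via the distance-nonincreasing property of $\pi$, the Hausdorff bound via the length estimate $t_i-s_i\le\lambda\eps$ on excursions, and the lower bound by the same parameter-extension trick (replacing $s,t$ by nearby parameters $s'\le s\le t\le t'$ at which $\gamma$ hits its projection exactly). The only difference is presentational---you organize things via an explicit excursion decomposition of $[a,b]\setminus\gamma^{-1}(\Gamma_A)$, whereas the paper states the existence of such $s',t'$ directly from the cut-vertex property---but the arguments are the same.
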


\begin{proof}
The map $\pi \circ \gamma$ is continuous because $\pi$ and $\gamma$ are continuous. To show that $\pi\circ\gamma$ is a $(\lambda, \eps)$-quasi-geodesic, we have to prove that
\begin{align}
    \frac{1}{\lambda}\abs{t-s}-\eps \leq d(\pi\circ\gamma(t), \pi\circ\gamma(s) )  \leq \lambda \abs{t-s}+\eps,\label{lem2.3eq2}
\end{align}
for all $t, s\in [a, b]$. The right hand side follows from 
\begin{align}
d(\pi\circ\gamma(t), \pi\circ\gamma(s) )\leq d(\gamma(t), \gamma(s)), \label{lem2.3eq1}
\end{align}
because $\gamma$ is a $(\lambda, \eps)$-quasi-geodesic. Note that $\eqref{lem2.3eq1}$ holds because if $\gamma(t)$ and $\gamma(s)$ are in the same connected component of $\Gamma\setminus\Gamma_A$, then $\pi\circ\gamma(t) = \pi\circ\gamma(s)$. If $\gamma(t)$ and $\gamma(s)$ are not in the same connected component of $\Gamma\setminus  \Gamma_A$, then any path (and thus any geodesic) from $\gamma(t)$ to $\gamma(s)$ goes through $\pi\circ\gamma(t)$ and $\pi\circ\gamma(s)$.

To prove the left hand side of \eqref{lem2.3eq2}, we need the following observation: if $\pi\circ\gamma(s) = a$ then there exists $s_1\leq s\leq s_2$ such that $\gamma(s_1) = x$ and $\gamma(s_2) = x$. This holds, as any path from $\gamma(a)$ to $\gamma(s)$ goes through $x$. Analogously any path from $\gamma(s)$ to $\gamma(b)$ goes through $x$.
Let $s, t\in [a, b]$ and $s^\prime \leq s \leq t \leq t^\prime$ such that $\gamma(s^\prime) = \pi(\gamma(s))$ and $\gamma(t^\prime) = \pi(\gamma(t))$. As $\gamma$ is a $(\lambda, \eps)$-quasi-geodesic, we have $d(\gamma(s^\prime), \gamma(t^\prime))\geq \frac{1}{\lambda}\abs{s^\prime - t^\prime} - \eps$. But $d(\gamma(s^\prime), \gamma(t^\prime) )= d(\pi\circ\gamma(t), \pi\circ\gamma(s) )$ and $\abs{s^\prime - t^\prime}\geq \abs{s-t}$, which concludes the proof of the left hand side of \eqref{lem2.3eq2}.

To show that $d_H(\gamma, \pi\circ\gamma)\leq \lambda^2\eps+\eps$ it is enough to show that $d(\gamma(t), \pi\circ\gamma(t))\leq \lambda^2\eps +\eps$ for all $t\in [a, b]$. Let $t\in [a, b]$ and $t_1\leq t\leq t_2$ such that $\gamma(t_1) = \pi\circ\gamma(t) = \gamma(t_2)$. We have shown above that $d(\gamma(t_1) , \gamma(t))\leq \lambda\abs{t_1-t} + \eps$. We also have $0=d(\gamma(t_1), \gamma(t_2) )  \geq  \frac{1}{\lambda}\abs{t_1-t_2}-\eps$. Therefore, $\abs{t_1-t}\leq\abs{t_1-t_2}\leq \lambda\eps$, which concludes the proof.
\end{proof}

\begin{lemma}\label{combinatorial:move_geodesics}
Let $\eta\subset \Gamma_A$ be an $M$-Morse geodesic. Then, $\eta$ considered as a geodesic in $\Gamma$ is $M^\prime$-Morse, where $M^\prime$ depends only on $M$.
\end{lemma}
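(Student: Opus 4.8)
The key observation is that $\eta$, being a geodesic in $\Gamma_A$ between points of $\Gamma_A$, remains a geodesic when viewed in $\Gamma$, since the inclusion $\Gamma_A \hookrightarrow \Gamma$ is an isometric embedding. So the content of the lemma is purely about controlling quasi-geodesics of $\Gamma$ that have endpoints on $\eta$. By Remark~\ref{morse:continuity} it suffices to show that every \emph{continuous} $(\lambda, \eps)$-quasi-geodesic $\gamma$ with endpoints on $\eta$ lies in a $C$-neighbourhood of $\eta$, where $C$ depends only on $\lambda, \eps$ and $M$. The plan is to push such a $\gamma$ back into $\Gamma_A$ using the projection $\pi$, apply the Morseness of $\eta$ \emph{inside} $\Gamma_A$ to the projected path, and then transfer the resulting bound back to $\gamma$ using the fact that $\pi\circ\gamma$ stays close to $\gamma$.

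Concretely, I would argue as follows. Let $\gamma$ be a continuous $(\lambda, \eps)$-quasi-geodesic in $\Gamma$ with both endpoints on $\eta \subset \Gamma_A$. Since $\pi$ fixes $\Gamma_A$ pointwise, the projected path $\pi\circ\gamma$ has the same endpoints and therefore also has its endpoints on $\eta$. By Lemma~\ref{free_product:moving_geo_to_gamma_a}, $\pi\circ\gamma$ is a continuous $(\lambda,\eps)$-quasi-geodesic in $\Gamma_A$ satisfying $d_H(\gamma, \pi\circ\gamma) \leq \lambda^2\eps + \eps$. Because $\eta$ is $M$-Morse as a geodesic of $\Gamma_A$, the quasi-geodesic $\pi\circ\gamma$ is contained in $\mathcal{N}_{M(\lambda, \eps)}(\eta)$ inside $\Gamma_A$, and hence also inside $\Gamma$ by the isometric embedding. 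Combining this with the Hausdorff bound via the triangle inequality, every point of $\gamma$ lies within $M(\lambda,\eps) + \lambda^2\eps + \eps$ of $\eta$. Thus $\gamma \subset \mathcal{N}_{C}(\eta)$ with $C = M(\lambda,\eps) + \lambda^2\eps + \eps$, which yields the Morse gauge $M'(\lambda,\eps) = M(\lambda,\eps) + \lambda^2\eps + \eps$, depending only on $M$.

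The only real subtlety, and hence the main point to get right, is the interaction between the projection and the endpoints: Lemma~\ref{free_product:moving_geo_to_gamma_a} applies only to continuous quasi-geodesics with endpoints in $\Gamma_A$, which is exactly why the reduction to continuous quasi-geodesics via Remark~\ref{morse:continuity} is needed and why it is important that the endpoints of $\gamma$ lie on $\eta \subset \Gamma_A$ so that $\pi$ leaves them fixed. Everything else is bookkeeping: the projection $\pi$ neither increases the quasi-geodesic constants nor moves points far (Lemma~\ref{free_product:moving_geo_to_gamma_a} controls both simultaneously), so the Morseness of $\eta$ in the convex piece $\Gamma_A$ is enough to dominate quasi-geodesics that wander into the other factors, after bringing them back with $\pi$. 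No dependence on $B$ or on $\lambda,\eps$ beyond the stated gauge enters, so the resulting $M'$ depends only on $M$ as required.
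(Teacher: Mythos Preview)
Your proof is correct and follows essentially the same approach as the paper: reduce to continuous quasi-geodesics via Remark~\ref{morse:continuity}, project with $\pi$ using Lemma~\ref{free_product:moving_geo_to_gamma_a}, apply the $M$-Morseness of $\eta$ in $\Gamma_A$, and combine the two bounds. You even arrive at the identical explicit gauge $M'(\lambda,\eps)=M(\lambda,\eps)+\lambda^2\eps+\eps$.
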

\begin{proof}
Let $\gamma\subset \Gamma$ be a continuous $(\lambda, \eps)$-quasi-geodesic with endpoints on $\eta$.  Lemma~\ref{free_product:moving_geo_to_gamma_a} shows that $\pi\circ\gamma\subset\Gamma_A$ is a $(\lambda, \eps)$-quasi-geodesic with endpoints on $\eta$. Since $\eta$ is $M$-Morse in $\Gamma_A$, $\pi\circ\gamma$ is in an $M(\lambda, \eps)$ neighbourhood of $\eta$. Thus $\gamma$ is in an $M(\lambda, \eps)+ \lambda^2\eps + \eps$ neighbourhood of $\eta$. By Remark~\ref{morse:continuity}, this concludes the proof. 
\end{proof}

\subsection{Construction of the combinatorial Morse boundary}\label{section2.1}

Consider an $M$-Morse geodesic ray $\xi$ in $\Gamma$ starting at $e$. The geodesic $\xi$ travels along an infinite sequence of edges $(s_i)_{i\in \N}$, each edge being from either $S_A$ or $S_B$. One of the following happens: either $(s_i)_{i\in \N}$ contains infinitely many edges from  both $S_A$ and $S_B$ (we call this the infinite case) or the tail of $(s_i)_{i\in \N}$ is contained completely in either $S_A$ or $S_B$ (we call this the finite case). Define $c_1 = s_1\cdot \ldots \cdot s_{k_1}$ such that $k_1$ is the maximal integer satisfying $s_i\in A$ for all $1\leq i \leq k_1$. For $i\geq 2$ iteratively define 
\begin{align}
    c_{i} &= s_{k_{i-1}+1}\cdot \ldots \cdot s_{k_{i}}
\end{align}
where $k_i$ is the maximal integer such that $s_j\in G_{i\%2}$ for all $k_{i-1}< j \leq k_i$, where $G_{i\%2}$ denotes $A$ if $i$ is odd and $B$ otherwise. In the infinite case $c_{2i+1}\in A\setminus \{e\}$, $c_{2i}\in B\setminus\{e\}$ for all $i\geq 1$ and $\xi$ walks from $c_1$ to $c_1c_2$ to $c_1 c_2 c_3$ and so on. In the finite case we have some $i$ such that $c_i$ is defined as the product of infinitely many elements of $G_{i\%2}$. Then, $c_i$ can not be viewed as an element of $G_{i\%2}$ but as a geodesic in $\Gamma_{G_{i\%2}}$ starting at $e$ and going along $s_{k_{i-1}+1}, s_{k_{i-1}+2}, \ldots$ and so on. 

This shows that in the infinite case, we can view $\xi$ as an infinite sequence $(c_i)_{i\in\N}$ and in the finite case we can view $\xi$ as a sequence $c_1, \ldots , c_m$ and some geodesic $\xi^\prime\in \Gamma_{G_{(m+1)\%2}}$. It also motivates the following definition: Let $M$ be a Morse gauge, define
\begin{align*}
    \T{M}  &= \left\{ (u_i)_{i\in \N} \mid u_i\in \begin{cases}
    \P{M}{A} & \text{if $i = 1$,}\\
    \P{M}{A}\setminus \{\e\} & \text{if $i>1$ is odd,}\\
    \P{M}{B}\setminus\{\e\}& \text{if $i$ is even.} 
    \end{cases}\qquad \right\}\\
    \stab{M}  &= \left\{ (u_1, \ldots, u_n ; z ) \mid n\in \N_0,\quad  u_i\in \begin{cases}
    \P{M}{A} & \text{if $i = 1$,}\\
    \P{M}{A}\setminus \{\e\} & \text{if $i>1$ is odd,}\\
    \P{M}{B}\setminus\{\e\} & \text{if $i$ is even.} 
    \end{cases}\quad 
    z \in \M{M}{G_{n\%2}}\right\}.
\end{align*}
Going back to our description from above we have in the infinite case $(c_i)_{i\in \N}\in \T{M_S}$ and in the finite case $(c_1, \ldots, c_{i-1}; \C{c_i})\in \stab{M_S}$.

As a set, the combinatorial $M$-Morse boundary $\co{M}$ is defined as
\[
\co{M} = \T{M} \cup \stab{M}. 
\]

We will call elements of $\co{M}$ combinatorial $M$-Morse geodesics and say that an element $a\in\co{M}$ is:
\begin{itemize}
    \item Of \emph{infinite type} if $a\in \T{M}$.
    \item Of \emph{finite type} if $a\in\stab{M}$.
\end{itemize}

Sometimes, we will talk about elements $\co{M}$ without writing down its explicit representation. To nonetheless be able to talk about explicit parts of their representation, we introduce the following notation.
\subsubsection*{Notation} Let $ a = (u_1, \ldots, u_n ; z)$ be an element of $\stab{M}$. We say the integer $n$ is the \emph{length of $a$} and we will denote it by $l(a)$. For any integer $1\leq i \leq l(a)$, $u_i(a)$ will denote $u_i$ and $\gamma(a)$ will denote $z$. For an element $b= (u_i)_{i\in\N}$ of $\T{M}$ we will say that its length $l(b)$ is infinite and for any positive integer $i$, $u_i(b)$ will denote $u_i$. Furthermore, define $v_0(a) = e$ and $v_i(a) = u_i v_{i-1}(a)$ for all $1\leq i \leq l(a)$.

Let $M$ be a Morse gauge. We will define the topology on $\co{M}$ by defining a fundamental system of neighbourhoods $\{V^{M}_k(a)\}_{k\in \N}$ for all combinatorial $M$-Morse geodesics $a\in\co{M}$. Let $k\in \N$ be an integer and $a\in\co{M}$:
\begin{itemize}
    \item If $a$ is of infinite type we will define 
    \begin{align}
        V_k^M (a) = \{b \in \co{M} \mid \text{ $l(b) \geq k$ and $u_i(b) = u_i(a)$ for all $1\leq i  \leq k$} \}.
    \end{align}
    \item If $a$ is of finite type, $V_k^M(a)$ consists of all combinatorial $M$-Morse geodesics $b\in \co{M}$ that satisfy:
    \begin{enumerate}[label = \roman*)]
        \item $l(b)\geq l(a)$, \label{haha}
        \item $u_i(b) = u_i(a)$, for all $1\leq i \leq l(a)$,
        \item $u_{l(a)+1} \in \U{M}{k}{\gamma(a)}$, if $l(b) > l(a)$,
        \item $\gamma(b) \in U_k^M(\gamma(a))$, if $l(b) = l(a)$.
    \end{enumerate}
\end{itemize}
\begin{remark}
Condition ii) is equivalent to the condition $v_{l(a)}(a) = v_{l(a)}(b)$.
\end{remark}
\begin{lemma}
This is indeed a fundamental system of neighbourhoods.
\end{lemma}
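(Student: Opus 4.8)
The plan is to verify that the assignment $a\mapsto\{\vh{M}{k}{a}\}_{k\in\N}$ satisfies the three axioms characterising a fundamental system of neighbourhoods: (B1) $a\in\vh{M}{k}{a}$ for every $k$; (B2) for any $k_1,k_2$ there is $k_3$ with $\vh{M}{k_3}{a}\subset\vh{M}{k_1}{a}\cap\vh{M}{k_2}{a}$; and (B3) for every $k$ there is $k'$ such that $\vh{M}{k}{a}$ is a neighbourhood of each point of $\vh{M}{k'}{a}$, i.e. for each $b\in\vh{M}{k'}{a}$ there is an index $k''$ with $\vh{M}{k''}{b}\subset\vh{M}{k}{a}$. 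Axiom (B1) is immediate: if $a$ is of infinite type it trivially matches its own first $k$ letters and $l(a)=\infty\geq k$, while if $a$ is of finite type conditions i), ii) hold with $b=a$ and condition iv) reduces to $\gamma(a)\in\U{M}{k}{\gamma(a)}$, which holds by the remarks preceding this lemma. For (B2) I would first check that the families are nested, $\vh{M}{k'}{a}\subset\vh{M}{k}{a}$ whenever $k'\geq k$: the defining conditions only become more restrictive as the index grows, using that the filled neighbourhoods $\U{M}{k}{\cdot}$ are themselves nested in $k$. Then $k_3=\max\{k_1,k_2\}$ works.

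The content is in (B3), which I would split according to the type of $a$. If $a$ is of infinite type I take $k'=k$. Given $b\in\vh{M}{k}{a}$ we have $l(b)\geq k$ and $u_i(b)=u_i(a)$ for $1\leq i\leq k$; choosing $k''=k$ when $b$ is of infinite type, or $k''=1$ when $b$ is of finite type (in which case membership in $\vh{M}{1}{b}$ already forces agreement with $b$ on all of its $l(b)\geq k$ letters), every $c\in\vh{M}{k''}{b}$ agrees with $a$ on its first $k$ letters and has $l(c)\geq k$, hence lies in $\vh{M}{k}{a}$.

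The main obstacle, and the case that genuinely uses the theory developed earlier, is when $a$ is of finite type. Here I would apply Lemma~\ref{morse_property:open_set_in_open_set} to the $M$-Morse direction $z=\gamma(a)$ with the given $k$ playing the role of $l$, obtaining a uniform integer $\kappa\geq k$ such that $\U{M}{\kappa}{y}\subset\U{M}{k}{z}$ for every $y\in\U{M}{\kappa}{z}$. I then set $k'=\kappa$ and verify (B3) for $b\in\vh{M}{\kappa}{a}$ by distinguishing $l(b)>l(a)$ from $l(b)=l(a)$. If $l(b)>l(a)$, then $b$ already agrees with $a$ up to letter $l(a)$ and satisfies $u_{l(a)+1}(b)\in\U{M}{\kappa}{z}\subset\U{M}{k}{z}$; forcing agreement on the first $l(a)+1$ letters (via $k''=l(a)+1$ if $b$ is of infinite type, or $k''=1$ if $b$ is of finite type) produces a sub-neighbourhood landing inside $\vh{M}{k}{a}$. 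The delicate case is $l(b)=l(a)$: then $w=\gamma(b)\in\U{M}{\kappa}{z}$, and it is precisely the uniform conclusion of Lemma~\ref{morse_property:open_set_in_open_set} that yields $\U{M}{\kappa}{w}\subset\U{M}{k}{z}$. Taking $k''=\kappa$, any $c\in\vh{M}{\kappa}{b}$ agrees with $a$ up to letter $l(a)$ and satisfies either $u_{l(a)+1}(c)\in\U{M}{\kappa}{w}\subset\U{M}{k}{z}$ (if $l(c)>l(a)$) or $\gamma(c)\in\U{M}{\kappa}{w}\subset\U{M}{k}{z}$ (if $l(c)=l(a)$), so $c\in\vh{M}{k}{a}$.

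I expect this last step to be the crux. The need to shrink $\vh{M}{k}{a}$ to $\vh{M}{\kappa}{a}$ \emph{before} producing sub-neighbourhoods at each point is exactly why the correct form of (B3) (shrink first, then find a sub-neighbourhood at each point of the smaller set) must be used rather than the naive assertion that every point of $\vh{M}{k}{a}$ has a sub-neighbourhood inside it; the latter would amount to openness of $\U{M}{k}{z}$, which is not available. It is precisely the uniformity built into Lemma~\ref{morse_property:open_set_in_open_set} — a single $\kappa$ working for all $y\in\U{M}{\kappa}{z}$ — that makes the argument go through.
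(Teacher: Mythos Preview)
Your proof is correct and follows essentially the same approach as the paper: the three neighbourhood-basis axioms are checked directly, with (B2) handled by nesting and $\max$, and the only substantive step (B3) for finite-type $a$ handled via Lemma~\ref{morse_property:open_set_in_open_set} applied to $\gamma(a)$. Your case analysis is slightly more granular than the paper's (you split on whether $l(b)>l(a)$ or $l(b)=l(a)$, whereas the paper splits only on the type of $b$), and your index choices differ cosmetically (e.g.\ you take $k'=k$ in the infinite-type case while the paper takes $j=i+1$), but both arguments are the same in substance.
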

\begin{proof}
Let $a$ be a combinatorial $M$-Morse geodesic. We need to show the following three properties hold: 
\begin{enumerate}
    \item For all $k\in\N$, we have $a\in V_k^M(a)$.
    \item For all $i, j\in \N$ there exists a positive integer $k$ such that $V_k^M(a)\subset V_i^M(a)\cap V_j^M(a)$.
    \item For all $V_i^M(a)\in \{V_n^M(a)\}_{n\in \N}$ there exists $V_j^M(a)\in \{V_n^M(a)\}_{n\in \N}$ such that for every $b\in V_j^M(a)$ there exists $k\in \N$ such that $V_k^M(b)\subset V_i^M(a)$.
\end{enumerate}

Property 1 follows directly from the definition. Property 2 holds when setting $k= \max\{i, j\}$. To prove Property 3, we will make a case distinction: Assume $a$ is of infinite type. In this case, setting $j=i+1$ and $k=j$ works.

Assume $a$ is of finite type. Use Lemma \ref{morse_property:open_set_in_open_set} to get $j\in \N$ such that $\u{M}{j}{y}\subset \u{M}{i}{\gamma (a)}$ for all $y\in \u{M}{j}{\gamma(a)}$. If $b$ is of finite type, set $k= j$. If $b$ is of infinite type, set $k = l(a)+1$.
\end{proof}

Let $M$ and $N$ be two Morse gauges with $M\leq N$. We have that both $\P{M}{A}\subset \P{N}{A}$ and $\M{M}{A}\subset\M{N}{A}$ and we have the same for $B$. Thus, the inclusion
\begin{align}
    i_{M ,N} : \co{M} \into \co{N}
\end{align}
is well-defined. More importantly, for any combinatorial $M$-Morse geodesic $a\in \co{M}$, we have $V_k^M(a) = V_k^N(a) \cap \co{M}$ and hence the inclusion $i_{M, N}$ is not only well-defined but also continuous. Therefore, we can define the combinatorial Morse boundary as follows:

\begin{definition}
The combinatorial Morse boundary of $\Gamma$ is defined as the direct limit 
\begin{align}
    \cm = \lim_{\xrightarrow[\mathcal{M}]{}}\co{M}.
\end{align}

\end{definition}

\begin{lemma}
The topology on $\co{M}$ induced by the fundamental system of neighbourhoods $\{V^{M}_{k}(a)\}_{k\in \N}$ coincides with the subspace topology of $\co{M}\subset \cm$.
\end{lemma}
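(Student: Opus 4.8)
The plan is to show the two topologies coincide by proving each is finer than the other. Write $Y$ for $\co{M}$ equipped with the topology from the fundamental system $\{\vh{M}{k}{a}\}_{k\in\N}$ and $Z$ for $\co{M}\subset\cm$ with the subspace topology. One inclusion is immediate: since $\cm$ carries the direct limit topology, the inclusion $\co{M}\into\cm$ is continuous, so the preimage $U\cap\co{M}$ of every open $U\subset\cm$ is open in $Y$; that is, every $Z$-open set is $Y$-open.

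For the reverse inclusion I cannot mimic the proof of the analogous statement for $\M{M}{X}$, where compactness of the $M$-Morse boundary was used to promote continuity of $i_{M,N}$ to a closed-map property. Here $\co{M}$ is typically non-compact (the sets $\P{M}{A}$ and $\P{M}{B}$ are unbounded), so I instead saturate each basic neighbourhood explicitly. Given $a\in\co{M}$ and $k\in\N$, set $W = \bigcup_{N\geq M}\vh{N}{k}{a}\subset\cm$, the union taken over all Morse gauges $N\geq M$. Using the relation $\vh{M}{k}{a}=\vh{N}{k}{a}\cap\co{M}$ established before the definition of $\cm$ (which also gives monotonicity $\vh{N}{k}{a}\subseteq\vh{N'}{k}{a}$ for $N\leq N'$, since $\vh{N}{k}{a}=\vh{N'}{k}{a}\cap\co{N}$), I would check that $W\cap\co{M}=\vh{M}{k}{a}$.

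The main work, and the step I expect to be the real obstacle, is verifying that $W$ is open in $\cm$, i.e. that $W\cap\co{L}$ is open in $\co{L}$ for every gauge $L$. For $L\geq M$ I would use monotonicity to arrange, for any $b\in W\cap\co{L}$, that $b\in\vh{N}{k}{a}$ with $N\geq L$, whence $b\in\vh{N}{k}{a}\cap\co{L}=\vh{L}{k}{a}$; together with $\vh{L}{k}{a}\subseteq W\cap\co{L}$ this yields $W\cap\co{L}=\vh{L}{k}{a}$, which is a basic open set. For arbitrary $L$, writing $L'=\max\{L,M\}$ reduces to the previous case via $W\cap\co{L}=\vh{L'}{k}{a}\cap\co{L}$, which is open because the inclusion $\co{L}\into\co{L'}$ is continuous. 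Hence $W$ is open in $\cm$ and $\vh{M}{k}{a}=W\cap\co{M}$ is $Z$-open. Since the sets $\vh{M}{k}{a}$ form a base for $Y$, every $Y$-open set is $Z$-open, and the two topologies agree.
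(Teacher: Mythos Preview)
Your argument has a genuine gap at the point where you assert that $W\cap\co{L}=\vh{L}{k}{a}$ ``is a basic open set.'' The sets $\vh{L}{k}{a}$ are elements of a fundamental system of \emph{neighbourhoods}, not a basis of open sets; nothing in the paper establishes that each $\vh{L}{k}{a}$ is itself open in $\co{L}$. Concretely, take $a$ of finite type and $b\in\vh{L}{k}{a}$ with $l(b)=l(a)$, so that $\gamma(b)\in U_k^L(\gamma(a))$. For $\vh{L}{k}{a}$ to be open you would need some $j$ with $\vh{L}{j}{b}\subset\vh{L}{k}{a}$, which in turn requires $U_j^L(\gamma(b))\subset U_k^L(\gamma(a))$ and $\U{L}{j}{\gamma(b)}\subset\U{L}{k}{\gamma(a)}$. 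But the available tool, Lemma~\ref{morse_property:open_set_in_open_set}, only gives such a $j$ for $b$ lying in the \emph{smaller} set $\U{L}{j}{\gamma(a)}$, not for arbitrary $b\in\U{L}{k}{\gamma(a)}$; the $\delta_L$-threshold in the definition of $U_k^L$ leaves no room to absorb an additional $\delta_L$ perturbation when $b$ sits near the edge of the neighbourhood. So the openness of $\vh{L}{k}{a}$ is unproven, and your saturation $W$ is not known to be open in $\cm$.

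The paper avoids this issue by working with closed sets rather than trying to exhibit open saturations. Given $C\subset\co{M}$ closed in the neighbourhood topology, it shows $C\cap\co{N}$ is closed in $\co{N}$ for every $N$ by producing, for each $a\in\co{N}\setminus C$, a basic neighbourhood $\vh{N}{k}{a}$ disjoint from $C$. The delicate case is $a$ of finite type with $a\notin\co{M}$ but all $u_i(a)\in\P{M}{A}\cup\P{M}{B}$: here the $l(b)=l(a)$ contributions are handled using that $\M{M}{A}$ is compact (so the relevant set $Y\subset\M{M}{A}$ is closed in $\M{N}{A}$), and the $l(b)>l(a)$ contributions are ruled out by an Arzel\`a--Ascoli argument showing that otherwise a limit of $M$-Morse realizations would force $\gamma(a)\in\M{M}{A}$, contradicting $a\notin\co{M}$. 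This compactness input is precisely what your explicit-saturation approach is missing.
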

\begin{proof}
It is enough to show that any closed $C\subset \co{M}$ satisfies that $C\cap\co{N}$ is closed for any Morse gauge $N$. In other words, we have to show that for any $a\in \co{N}\setminus C$ there exists a neighbourhood $V_k^N(a)$ such that $V_k^N(a)\cap C$ is empty. If there exists an integer $k$ such that $u_k(a)\notin \P{M}{A}\cup\P{M}{B}$ we are done, since in that case $V_k^N(a)\cap \co{M}$ is empty. If $a\in \co{M}$ we are done, as $V_{k+l}^N(a)\cap \co{M}\subset V_k^M(a)$ for some large enough $l$ and $C$ is closed in $\co{M}$. 

Thus, the only case left to consider is when $a$ is of finite type and $a\not\in \co{M}$. We may assume $\gamma(a)\subset A$. Define 
\begin{align*}
    Y = \{\gamma(b) \mid b\in C, v_{l(a)}(b) = v_{l(a)}(a)\}  \subset \M{M}{A}. 
\end{align*}
The set $Y$ is closed in $\M{M}{A}$ because $C$ is closed and thus $Y\cap\M{N}{A}$ is closed in $\M{N}{A}$. Therefore, there exists an integer $k_1$ such that $\u{N}{k_1}{\gamma(a)}\cap Y$ is empty. If we manage to find an integer $k_2$ such that $V_k^N(a)\cap \stab{M}\cap C$ is empty we can set $k = \max(k_1, k_2)$ and then $V_k^N(a)\cap C$ is empty. We show that such an integer $k_2$ exists by contradiction. Assume $V_k^N(a)\cap \stab{M}\cap C$ is nonempty for all positive integers $k$, then 
\begin{align*}
    Z_k = \{u_{l(a)+1}(b)\mid b\in C\cap V_k^N(a)\}
\end{align*}
is non-empty. Consider the sequence $(\gamma_i)_{\{i\in \N\}}$ such that $\gamma_i$ is an $M$-Morse realization of $x$ for some $x\in Z_i$. By Arzela-Ascoli, $(\gamma_i)_{\{i\in \N\}}$ contains a subsequence $(\gamma_{i_j})_{\{j\in \N\}}$ that converges uniformly on compact sets to some geodesic ray $\xi$. The proof of Lemma~2.10 of \cite{Cor16} shows that $\xi$ is $M$-Morse. Moreover, by the definition of $(\gamma_i)_{\{i\in \N\}}$, we have that $\xi \in \oh{N}{k}{\gamma(a)}$ for all $k$ and thus $\C\xi = \gamma(a)$. This is a contradiction to $a\not \in \co{M}$, which concludes the proof. 
\end{proof}

Below, we will show that the Morse boundary of $\Gamma$ is homeomorphic to its combinatorial Morse boundary. This will allow us to, in the next sections, work with the combinatorial Morse boundary, which is somewhat tamer than the Morse boundary itself.  

\subsection{Homeomorphism}
At the beginning of Section \ref{section2.1} we explained how, from a geodesic $\xi$ in $\Gamma$ starting at $e$ we can get a combinatorial Morse geodesic. We will call the combinatorial geodesic constructed this way $\Psi ( \xi)$. Note: If  $\xi$ is $M$-Morse, then $\Psi(\xi)$ is an $M_S$-Morse combinatorial geodesic. 

On the other hand, let  $a\in \cm$ be a combinatorial Morse geodesic. For all $1\leq i \leq l(a)$, let $p_i$ be a geodesic from $v_{i-1}(a)$ to $v_i(a)$. If $a$ is of infinite type we define 
\begin{align}
    \eta_a = p_1p_2\ldots 
\end{align}
If $a$ is of finite type we define 
\begin{align}
    \eta_a = p_1\ldots p_{l(a)} (v_{l(a)}\cdot \xi),
\end{align}
where $\xi$ is a realization of $\gamma(a)$. 
In both cases we define 
\begin{align}
    \Phi(a) = \C{\eta_a}.
\end{align} 

\begin{proposition}
The map 
\begin{align}
    \Phi : \cm \to \m  \Gamma
\end{align}
is well defined. Furthermore, if $a\in \co{M}$ is an $M$-Morse combinatorial geodesic $\Phi(a)$ is $M^\prime$-Morse, where $M^\prime$ depends only on $M$. 
\end{proposition}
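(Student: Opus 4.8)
The plan is to prove two things: that $\eta_a$ is a genuine geodesic representing a point of $\m{\Gamma}$ that is independent of the choices in its construction, and that this point is $M'$-Morse with $M'$ depending only on $M$. First I would show $\eta_a$ is a geodesic. By the cut-point structure of the free product recorded in Lemma~\ref{combinatorial:geodesics}, any path from $e$ to $v_i(a)$ passes through the partial products $v_1(a),\ldots,v_{i-1}(a)$ in order, so $d(e,v_i(a))=\sum_{j=1}^{i}d(v_{j-1}(a),v_j(a))$ and each prefix $p_1\cdots p_i$ is a geodesic; in the infinite case $\eta_a$ is therefore a geodesic ray, every finite prefix being geodesic. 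In the finite case the tail $v_{l(a)}\cdot\xi$ enters the factor opposite to the last syllable $u_{l(a)}$, so each point $v_{l(a)}\xi(t)$ has reduced form beginning with $u_1\cdots u_{l(a)}$ and the same additivity (via Lemma~\ref{combinatorial:geodesics}) shows $p_1\cdots p_{l(a)}(v_{l(a)}\cdot\xi)$ is a geodesic ray.

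The heart of the proof is the uniform Morse bound. By Remark~\ref{morse:continuity} it suffices to bound, by a constant depending only on $\lambda,\eps$ and $M$, the excursion from $\eta_a$ of an arbitrary continuous $(\lambda,\eps)$-quasi-geodesic $\gamma$ with endpoints $\eta_a(s),\eta_a(t)$ on $\eta_a$. Iterating the two-piece concatenation bound (Lemma~\ref{morse_property:concatenation_of_geodesic}) is hopeless here, since the gauge would degrade with the number of syllables between $s$ and $t$; instead I would cut $\gamma$ along the tree-like structure of the free product. Let $w_1,\ldots,w_m$ be the cut points lying strictly between $\eta_a(s)$ and $\eta_a(t)$ along $\eta_a$. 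Each $w_r$ is a separating vertex (removing it disconnects $C_{w_r}$ from $\overline{C}_{w_r}$ by the discussion preceding Lemma~\ref{combinatorial:geodesics}), so the continuous path $\gamma$ meets $w_1,\ldots,w_m$ in order; choosing times $\tau_0<\cdots<\tau_{m+1}$ with $\gamma(\tau_0)=\eta_a(s)$, $\gamma(\tau_{m+1})=\eta_a(t)$ and $\gamma(\tau_r)=w_r$ splits $\gamma$ into sub-quasi-geodesics $\gamma_r=\gamma[\tau_r,\tau_{r+1}]$ with the same constants $(\lambda,\eps)$. The key observation is that the endpoints $w_r,w_{r+1}$ of $\gamma_r$ are consecutive cut points, so they bound an arc $\sigma_r$ of $\eta_a$ contained in a single factor coset $C_r$. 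Inside $C_r$ the arc $\sigma_r$ is uniformly Morse: for an interior syllable it is a geodesic sharing endpoints with an $M$-Morse realization of some $u_k$, hence $M'$-Morse by Lemma~\ref{lem_2.5_3}(i); for the two extreme arcs (including the tail, where Lemma~\ref{morse:bound_on_realization} first makes $\xi$ uniformly Morse) it is a subsegment, $M_S$-Morse by Lemma~\ref{morse:subsegments}. Hence $\sigma_r$ is $M'$-Morse in all of $\Gamma$ by Lemma~\ref{combinatorial:move_geodesics} (whose proof is precisely the projection estimate of Lemma~\ref{free_product:moving_geo_to_gamma_a}), with $M'$ depending only on $M$. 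Since $\gamma_r$ is a $(\lambda,\eps)$-quasi-geodesic with endpoints on $\sigma_r$, it lies in the $M'(\lambda,\eps)$-neighbourhood of $\sigma_r\subset\eta_a$. As this bound is uniform over $r$ and independent of $m$, all of $\gamma$ lies within $M'(\lambda,\eps)$ of $\eta_a$, giving the required gauge.

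For well-definedness of $\Phi$ I would check that $\C{\eta_a}$ does not depend on the choices. Any two choices of a segment $p_i$ are geodesics with the same endpoints, hence within uniform Hausdorff distance by the Morse property, and any two realizations $\xi,\xi'$ of $\gamma(a)$ are within $\delta_{M_{\mathcal{C}}}$ of each other by Corollary~\ref{cor2.5_2.6_cor16}; concatenating, the resulting rays have bounded Hausdorff distance and so represent the same point of $\m{\Gamma}$. Because the construction of $\eta_a$ never refers to the gauge $M$, the value is unchanged under the inclusions $\co{M}\hookrightarrow\co{N}$, so $\Phi$ descends to a well-defined map on the direct limit $\cm$. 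I expect the Morse bound to be the genuinely delicate step: one must verify that $\gamma$ really does meet the interior cut points in order, so that the decomposition is valid, and, more importantly, that the per-piece estimate is uniform, so that cutting $\gamma$ into an unbounded number of syllables does not cause the gauge to blow up — exactly the point where the naive concatenation argument fails.
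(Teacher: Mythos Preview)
Your proposal is correct and follows essentially the same strategy as the paper: show $\eta_a$ is geodesic via the cut-point structure, then for the Morse bound cut an arbitrary continuous $(\lambda,\eps)$-quasi-geodesic $\gamma$ at the separating vertices $v_i(a)$ it must traverse, and bound each piece against a single syllable, which is uniformly Morse in $\Gamma$ via Lemma~\ref{combinatorial:move_geodesics}. The only cosmetic differences are that the paper simply \emph{chooses} $p_i$ and $\xi$ to be $M$-Morse in the factor from the outset (rather than arguing, as you do, that an arbitrary choice is uniformly Morse because it shares endpoints with an $M$-Morse realization), and that your appeal to Lemma~\ref{lem_2.5_3}(i) for ``sharing endpoints implies $M'$-Morse'' should really go through part~(iii) first to obtain the Hausdorff bound.
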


\begin{proof}
Note that we first have to argue that $\eta_a$ is a geodesic. However, this follows from Lemma~\ref{combinatorial:geodesics}. Secondly, we show that when choosing appropriate geodesics $p_i$ and $\xi$, then $\eta_a$ is $M^\prime$-Morse where $M^\prime$ depends only on $M$ and lastly we show that $\C{\eta_a}$ does not depend on the choice of representatives $p_i$ and $\xi$.

For all $1\leq i\leq l(a)$, choose $p_i$ such that $p_i$ is $M$-Morse in $v_{i-1}(a)\cdot \Gamma_{G_{i\%2}}\subset  \Gamma$. Such a choice is possible since $u_i(a)\in \P{M}{G_{i\%2}}$. Furthermore, if $a$ is of finite type, choose $\xi$ to be an $M$-Morse realization of $\gamma(a)$ (again considered in $\Gamma_{G_{i\%2}}$). Lemma \ref{combinatorial:move_geodesics} shows that the $p_i$ and $\xi$ are $M^{\prime\prime}$-Morse in $\Gamma$, where $M^{\prime\prime}$ depends only on $M$. We claim that $\eta_a$ is $M^{\prime}$-Morse, where $M^{\prime}$ depends only on $M^{\prime\prime}$. 

Let $\gamma$ be a continuous $(\lambda, \eps)$-quasi-geodesic with endpoints on $\eta$. Assume $\gamma$ starts on $p_i$ for some $i$ and ends on some $p_j$ for some $j$ (the proof works analogously if one or more of them are on $v_{l(a)}(a)\cdot \xi$ instead). 
Assume that $i\leq j$, then by Lemma~\ref{combinatorial:geodesics} $\gamma$ goes through $v_i(a), \ldots , v_{j-2}(a)$ and $v_{j-1}(a)$. Thus we can split $\gamma$ into multiple parts, each having endpoints on an $M^{\prime\prime}$-Morse geodesic being a sub-geodesic of $\eta_a$. 
Therefore, each part of $\gamma$ is in an $M^{\prime\prime}(\lambda, \eps)$ neighbourhood of $\eta_a$. Using Remark~\ref{morse:continuity}, this shows that with this choice of $p_i$ and $\xi$, $\eta_a$ is an $M^\prime$-Morse geodesic, where $M^{\prime}$ depends only on $M$. 

Now we have to show that $\C{\eta_a}$ does not depend on the choice of $p_i$ and $\xi$. Or in other words, that if we choose different $\tilde{p}_i$ and $\tilde{\xi}$ to construct a geodesic $\tilde{\eta}_a$, then $\tilde{\eta}_a$ and $\eta_a$ have bounded Hausdorff distance. 
As $\C\xi = \C{\tilde{\xi}}$ it is clear that $v_{l(a)}\cdot \xi$ and $v_{l(a)}\cdot \tilde{\xi}$ have bounded Hausdorff distance. 
Lemma~2.5 from \cite{Charney_2014} shows that $p_i$ and $\tilde{p}_i$ have Hausdorff distance at most $C$, where $C$ depends only on $M^\prime$. Therefore, $\eta_a$ and $\tilde{\eta}_a$ have bounded Hausdorff distance, which concludes the proof.
\end{proof}

We can show that $\Phi$ is in fact a homeomorphism. 

\begin{lemma}
The map $\Phi$ is a bijection.
\end{lemma}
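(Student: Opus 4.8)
The plan is to show that $\Phi$ is both injective and surjective. For injectivity, I would argue that $\Phi(a) = \Phi(b)$ forces $a = b$ by inducting on the length structure and using the combinatorial rigidity provided by Lemma~\ref{combinatorial:geodesics}. The key observation is that the coarse geometry of $\eta_a$ remembers the ``syllable decomposition'' $(u_i)$: any geodesic representing $\C{\eta_a}$ must pass within bounded distance of the cut vertices $v_1(a), v_2(a), \ldots$, because these are the unique points through which all paths between the separated components $C_{v_i}$ and $\overline{C}_{v_i}$ must travel. So if $\Phi(a) = \Phi(b)$, the geodesics $\eta_a$ and $\eta_b$ have bounded Hausdorff distance, which should force $v_i(a) = v_i(b)$ for all $i$ (the cut vertices are at unbounded distance from one another and are canonically determined by the direction), hence $u_i(a) = u_i(b)$. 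In the finite-type case, once the initial syllables match, the tails $v_{l(a)}\cdot\xi$ and $v_{l(b)}\cdot\xi'$ must represent the same Morse direction in the appropriate factor, giving $\gamma(a) = \gamma(b)$.

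For surjectivity, I would start from an arbitrary Morse direction $z \in \m\Gamma$, choose an $M$-Morse realization $\xi$ starting at $e$, and apply the construction $\Psi$ from the beginning of Section~\ref{section2.1} to produce a combinatorial Morse geodesic $\Psi(\xi) \in \cm$. The content to verify is that $\Phi(\Psi(\xi)) = \C{\xi} = z$. This amounts to checking that reassembling the syllables $c_i$ (or the syllables together with the infinite tail $\xi'$) via geodesic segments $p_i$ produces a geodesic with bounded Hausdorff distance from $\xi$. Since $\xi$ literally travels through the points $v_i(\Psi(\xi))$ in order, each segment $\xi[v_{i-1}, v_i]$ is itself a geodesic between $v_{i-1}$ and $v_i$, so it differs from the chosen $p_i$ only by the bounded Hausdorff distance guaranteed by Lemma~2.5 of \cite{Charney_2014} (both being Morse geodesics with common endpoints). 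Concatenating, $\eta_{\Psi(\xi)}$ and $\xi$ stay boundedly close, so $\Phi(\Psi(\xi)) = \C{\xi}$.

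The main obstacle I anticipate is the injectivity argument in the finite-type case and, more subtly, the passage between the two notions of ``syllable.'' In the finite case the decomposition of $\xi$ ends in an infinite tail that lives in a single translated factor $v_m \cdot \Gamma_{G}$, and I must argue both that this tail is genuinely Morse \emph{within that factor} (so that $\gamma(a)$ is a well-defined element of $\M{M}{G}$) and that bounded Hausdorff distance of the full geodesics descends to equality of the boundary directions in the factor. The tool here is the projection $\pi$ together with Lemma~\ref{free_product:moving_geo_to_gamma_a}, which lets me push the tail into the factor's Cayley graph while controlling distances; combined with the fact that two geodesic rays with the same endpoint direction and bounded Hausdorff distance represent the same Morse direction, this closes the gap. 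The infinite-type versus finite-type dichotomy must also be shown to be preserved: a combinatorial geodesic of infinite type cannot have $\Phi$-image equal to that of a finite-type one, which follows because infinite type corresponds to $\xi$ crossing infinitely many distinct factors (equivalently $l = \infty$), a property invariant under bounded Hausdorff perturbation since the cut vertices $v_i$ march off to infinity.

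Once injectivity and surjectivity are established, I would note that continuity of $\Phi$ and of its inverse follows from comparing the fundamental systems of neighbourhoods $\{V_k^M(a)\}$ on $\co{M}$ with $\{\u{M'}{n}{z}\}$ on $\M{M'}{\Gamma}$, using Remark~\ref{lemma1.16_direct} and Lemma~\ref{morse_property:open_set_in_open_set} to match up the two systems level by level; but since the present statement only claims $\Phi$ is a bijection, I would confine the proof to injectivity and surjectivity as above.
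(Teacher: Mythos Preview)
Your proposal is correct and follows essentially the same route as the paper: surjectivity via $\Phi\circ\Psi=\mathrm{id}$, and injectivity by showing that distinct combinatorial data force $\eta_a$ and $\eta_{a'}$ to have unbounded Hausdorff distance using the cut-vertex separation of Lemma~\ref{combinatorial:geodesics}. The paper is simply more terse: for surjectivity it asserts $\Phi(\Psi(\xi))=\C{\xi}$ directly (one may take $p_i=\xi[v_{i-1},v_i]$ and get $\eta_{\Psi(\xi)}=\xi$ on the nose, so your appeal to Lemma~2.5 of \cite{Charney_2014} is unnecessary), and for injectivity it treats the infinite/finite mismatch under the single clause ``$l(a)>l(a')$'' rather than as a separate case.
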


\begin{proof}
Let $\C{a} \in \m{\Gamma}$. We have $\Phi(\Psi(a)) = \C{a}$ and thus $\Phi$ is surjective. To prove injectivity, let $a\neq a^\prime\in \cm$ be combinatorial Morse geodesics, assume $l(a)\geq l(a^\prime)$. We want to show that the to $a$ and $a^\prime$ constructed geodesics $\eta_a$ and $\eta_{a^\prime}$ do not have bounded Hausdorff distance. If there exists an integer $i$ such that $v_i(a)\neq {v_i}(a^\prime)$ or if $l(a)>l(a^\prime)$, then for large enough $t$ any geodesic between ${\eta_a}(t)$ and ${\eta_{a^\prime}}$ goes through $v_i$. The distance $d(v_i(a), {\eta_a}(t))$ tends to infinity as $t$ goes to infinity. Therefore, the Hausdorff distance between ${\eta_a}$ and ${\eta_{a^\prime}}$ is not bounded and thus $\Phi(a)\neq\Phi(a^\prime)$. 
If $l(a) = l(a^\prime)$ and $v_i(a) = v_i(a^\prime)$ for all $1\leq i \leq l(a)$, we have $\C{\gamma(a)} \neq \C{\gamma(a^\prime)}$. 
Thus, $v_{l(a)}\cdot \xi $ and $v_{l(a^\prime)}\cdot \xi^\prime$ 
have unbounded Hausdorff distance and so do ${\eta_a}$ and ${\eta_a}^\prime$. 
\end{proof}

\begin{proposition}
The map $\Phi$ is a homeomorphism.
\end{proposition}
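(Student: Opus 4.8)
The plan is to exploit that both $\cm$ and $\m{\Gamma}$ carry direct-limit topologies and that $\Phi$ respects the two filtrations: by the proposition above, $\Phi(\co{M})\subset\M{M^\prime}{\Gamma}$ with $M^\prime$ depending only on $M$, while $\Psi$ sends $M$-Morse rays into $\co{M_S}$ and satisfies $\Phi(\Psi(a))=\C{a}$. Since $\Phi$ is a bijection, $\Psi$ is constant on equivalence classes and coincides with $\Phi^{-1}$. Hence, by the universal property of the direct limit, it suffices to prove, for every Morse gauge $M$, that the restrictions $\Phi|_{\co{M}}:\co{M}\to\M{M^\prime}{\Gamma}$ and $\Phi^{-1}|_{\M{M}{\Gamma}}=\Psi|_{\M{M}{\Gamma}}:\M{M}{\Gamma}\to\co{M_S}$ are continuous; as each stratum $\M{N}{\Gamma}$ is topologically embedded in $\m{\Gamma}$ (and each $\co{N}$ in $\cm$), continuity of the restrictions upgrades to continuity of $\Phi$ and $\Phi^{-1}$. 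Both source and target of each restriction carry explicit fundamental systems of neighbourhoods, so I would argue entirely in those terms.

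For continuity of $\Phi|_{\co{M}}$, fix $a\in\co{M}$ and a target neighbourhood $\u{M^\prime}{n}{\Phi(a)}$. If $a$ is of infinite type, any $b\in\vh{M}{l}{a}$ satisfies $v_l(a)=v_l(b)$, so $\eta_a$ and $\eta_b$ run between the same vertices up to $v_l$; choosing $l$ with $d(e,v_l(a))\geq n$ and invoking Corollary~\ref{cor2.5_2.6_cor16} (together with the Hausdorff bound on distinct geodesics between the same endpoints) forces $\eta_b$ to $\delta_{M^\prime}$-fellow-travel $\eta_a$ on $[0,n]$, i.e. $\Phi(\vh{M}{l}{a})\subset\u{M^\prime}{n}{\Phi(a)}$. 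The substantive case is $a$ of finite type. Here $b\in\vh{M}{k}{a}$ again shares the vertex $v_{l(a)}(a)=v_{l(a)}(b)$, after which $\eta_b$ follows a realization $\xi$ of $\gamma(a)$ closely: if $l(b)=l(a)$ this is exactly the condition $\gamma(b)\in\u{M}{k}{\gamma(a)}$, and if $l(b)>l(a)$ it is the condition $u_{l(a)+1}(b)\in\U{M}{k}{\gamma(a)}$, which by definition of the filled neighbourhood forces a realization of $u_{l(a)+1}(b)$ to $\delta_M$-fellow-travel $\xi$ on $[0,k]$. Translating by $v_{l(a)}$ and absorbing the constants relating $\delta_M$, $\delta_{M^\prime}$ and the choice of geodesic pieces, $\eta_b$ fellow-travels $\eta_a$ on $[0,d(e,v_{l(a)})+k]$; choosing $k$ large enough that this exceeds $n$ yields $\Phi(b)\in\u{M^\prime}{n}{\Phi(a)}$.

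For continuity of $\Psi|_{\M{M}{\Gamma}}$, fix $z\in\M{M}{\Gamma}$ with $M$-Morse realization $\xi$ and a basic neighbourhood $\vh{M_S}{k}{\Psi(z)}$ of $\Psi(z)=\Psi(\xi)$; I would show that $\u{M}{n}{z}$ is mapped into it for $n$ large. The mechanism is that the decomposition of a geodesic into its $A$- and $B$-syllables is governed by which translated copies of $\Gamma_A$ and $\Gamma_B$ it traverses, and by Lemma~\ref{combinatorial:geodesics} the vertices $v_1,v_2,\dots$ are exactly the cut points through which every geodesic from $e$ must pass. Consequently, if a realization of $w\in\u{M}{n}{z}$ $\delta_M$-fellow-travels $\xi$ on $[0,n]$, then for $n$ large it must pass through the same initial cut vertices as $\xi$, giving $v_i(\Psi(w))=v_i(\Psi(z))$ and hence the conditions $\mathrm{i)}$ and $\mathrm{ii)}$ of the neighbourhood; the same fellow-traveling, read just past the last shared cut vertex, controls either the next syllable or the limiting direction, yielding $\mathrm{iii)}$ and $\mathrm{iv)}$ when $\Psi(z)$ is of finite type (the infinite-type case needs only the cut-vertex matching). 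Making $n$ large enough to guarantee agreement up to the last cut vertex and to push the relevant directional data into the required neighbourhood of $\gamma(\Psi(z))$ (using Lemma~\ref{morse_property:open_set_in_open_set} to obtain a uniform radius) gives $\Psi(\u{M}{n}{z})\subset\vh{M_S}{k}{\Psi(z)}$.

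The main obstacle I anticipate is the bookkeeping in the finite-type case in both directions: one must reconcile the two heterogeneous pieces of neighbourhood data in $\vh{M}{k}{a}$ — a far-out next letter lying in a filled neighbourhood $\U{M}{k}{\gamma(a)}$ versus a nearby limiting direction in $\u{M}{k}{\gamma(a)}$ — with the single $\delta$-fellow-traveling condition defining $\u{M^\prime}{n}{\cdot}$, all while tracking how the basepoint shift by $v_{l(a)}$ and the transitions between the gauges $M$, $M_S$ and $M^\prime$ rescale the various constants. The technical inputs that make this manageable are the fellow-traveling Corollary~\ref{cor2.5_2.6_cor16}, the definition of the filled neighbourhoods together with Remark~\ref{inclusion_of_neighbourhoods}, the uniformity afforded by Lemma~\ref{morse_property:open_set_in_open_set}, and the combinatorial Lemma~\ref{combinatorial:geodesics} forcing geodesics through the cut vertices $v_i$.
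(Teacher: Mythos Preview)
Your proposal is correct and follows essentially the same route as the paper: reduce to continuity of the stratum maps $\Phi|_{\co{M}}$ and $\Phi^{-1}|_{\M{M}{\Gamma}}$ via the direct-limit structure, then verify neighbourhood inclusions using fellow-travelling (Corollary~\ref{cor2.5_2.6_cor16}/Lemma~2.7 of \cite{Cor16}) for the forward map and the cut-vertex Lemma~\ref{combinatorial:geodesics} for the inverse. The paper organises the finite-type inverse case slightly differently---it directly argues that $\zeta_b[0,k+4\delta_{h(M)}]$ stays inside the relevant factor by a contradiction with the $\delta_M$-fellow-travelling, rather than invoking Lemma~\ref{morse_property:open_set_in_open_set}---but this is only a cosmetic difference in bookkeeping.
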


\begin{proof}
For a Morse gauge $M$, let $h(M)\geq M, M_S$ be a Morse gauge such that 
\begin{align}
\Phi(\co{M})\subset\M{h(M)}{\Gamma}\qquad \text{and}\qquad\Phi^{-1}(\M{M}{\Gamma}) \subset \co{h(M)}.
\end{align}
It is enough to show that the maps $\Phi_M : \co{M}\to\M{h(M)}{\Gamma}$ and $\Phi^{-1}_M:\M{M}{\Gamma} \to \co{h(M)}$ induced by $\Phi$ and $\Phi^{-1}$ respectively are continuous for all Morse gauges $M$.

First, we show that $\Phi_M$ is continuous. To do so, let $a\in \co{M}$ and let $k$ be a positive integer. We claim that
\begin{align}
    \Phi_M(V_{k+c}^M(a) ) \subset \u{h(M)}{k}{\Phi(a)},
\end{align}
where $c \geq 4\delta_{h(M)}+\delta_M+ 4 h(M)(3, 0)+1$. Let $b\in V_{k+c}^M(a)$. Assume $a$ is of finite type and let $D= d(e, v_{l(a)}(a))$. 
We have $d(\eta_b(k+c+D), \eta_a(k+c+D) ) < \delta_M$, therefore, Lemma~2.7 of \cite{Cor16} shows that for all $t< k+ D+2\delta_{h(M)}$
\begin{align}
    d(\eta_b(t), \eta_a(t))< \delta_{h(M)}.
\end{align} 
Therefore, $\eta_b\in \oh{h(M)}{k+4\delta_{h(M)}}{\eta_a}\subset \oh{h(M)}{k}{\Phi(a)}$. 

Assume $a$ is of infinite type. Note that $D = d(e, v_{k+c}(a))\geq k+c-1$, and that $d(\eta_a(D), \eta_b(D)) = 0$. Using Lemma~2.7 from \cite{Cor16} again we get that $\eta_b\in \oh{h(M)}{k+4\delta_{h(M)}}{\eta_a}\subset \oh{h(M)}{k}{\Phi(a)}$. This proves the claim and shows that $\Phi_M$ is continuous. 

Secondly, we prove that $\Phi^{-1}_M$ is continuous by showing that, for $a\in \M{M}{\Gamma}$,
\begin{align}
    \Phi^{-1}_M(\u{M}{l}{a}) \subset V^{h(M)}_{k}(\Phi^{-1}(a)),
\end{align}
where we will define $l$ later. Assume $\Phi^{-1}(a)$ is of finite type and let $b\in\u{M}{l}{a}$. We use the following notation: \begin{itemize}
    \item $\Phi^{-1}(a) = \hat{a}$ and $\Phi^{-1}(b) = \hat{b}$
    \item $v_{l(\hat{a})}(\hat{a}) = w$
    \item $D= d(e, w)$ and $l = k+ D+ 4\delta_{h(M)}+\delta_M$
    \item $\xi_a$ and $\xi_b$ are $M$-Morse realizations of $a$ and $b$ respectively
    \item $w^{-1}\cdot \xi_a[D, \infty) =  \zeta_a$ and $w^{-1}\cdot \xi_b[D, \infty) = \zeta_b$.
\end{itemize}
Any realization of $b$ goes through $w$, as $\xi_a[D, \infty)$ is a subset of $\Gamma[C_w]$. Therefore $\zeta_a$ and $\zeta_b$ are $M$-Morse geodesics starting at $e$. More importantly, $\zeta_a\subset \Gamma_A$ or $\zeta_a\subset \Gamma_B$. We may assume that $\zeta_a\subset \Gamma_A$ and show in this case that $\zeta_b[0, k + 4\delta_{h(M)}]\subset \Gamma_A$. Together with $\delta_M\leq\delta_{h(M)}$ this then implies that $\hat{b}\in V^{h(M)}_{k}(\hat{a})$, which is what we needed to show. 
Assume $\zeta_b[0, k+4\delta_{h(M)}]\not\subset \Gamma_A$, let $t\geq 0$ be the largest number such that $\zeta_b(t)\in \Gamma_A$. Then $\zeta_b[t, \infty)\subset \Gamma[C_{\zeta_b(t)}]$ while $\zeta_a\subset\Gamma[\bar{C}_{\zeta_b(t)}]$. 
Therefore, $d(\zeta_b(t+\delta_M), \zeta_a(t+\delta_M))>\delta_M$ which is a contradiction to $b\in \u{M}{l}{a}$ if $t< k + 4\delta_{h(M)}$. Hence $\zeta_b[0, k+ 4\delta_{h(M)}]\subset \Gamma_A$ which concludes this case. 

At last, assume $\Phi^{-1}(a)$ is of infinite type and let $b\in\u{M}{l}{a}$. Set $\Phi^{-1}(a) = \hat{a}$ and $\Phi^{-1}(b) = \hat{b}$. Furthermore, define
\begin{itemize}
    \item $w = v_k(\hat{a})$
    \item $D = d(e, w)$ and $l = D + \delta_{M}$
    \item $\xi_a$ and $\xi_b$ are $M$-Morse realizations of $a$ and $b$ respectively
\end{itemize}
We have $\xi_a [D, \infty) \subset \Gamma[C_w]$, so $\xi_b\cap\Gamma[C_w]\neq \emptyset$, which implies $\hat{b}\in V_k^{h(M)}(\hat{a})$.
\end{proof}

\section{Free product}\label{chapter:map}
In this section, we prove Theorem \ref{freeproduct_theorem}. To do so, we use the following set up: Let $p_A : \m A_1 \to \m A_2$ and $p_B : \m B_1\to \m B_2$ be homeomorphisms. We denote by $p$ the map from $\m A_1 \cup\m B_1$ to $\m A_2 \cup \m B_2$ their union induces. Recall that in Section \ref{section:homeo_of_morse_boundaries} we have seen that there exists a map $h: \mathcal M\to \mathcal M$ that describes the Morseness of $p(z)$ for a Morse direction $z\in \m A_1$. More precisely, there exists a map $h: \mathcal M \to \mathcal M$ that satisfies \eqref{eq:homoe_of_morse_strata}.

\subsection{Outline of the proof}\label{section:proof_outline}
We start with describing the overall structure of the proof. After that, we describe the various steps in more details. \\

\paragraph{Coarse structure of the proof} First, we use $p$ to construct bijections $\hat{p}_A : A_1 \to A_2$ and $\hat{p}_B : B_1 \to B_2$. 
We make sure that the bijection $\hat{p}_A$ (and analogously its inverse $\hat{p}_A^{-1}$,  $\hat{p}_B$ and $\hat{p}^{-1}_B$) we construct respects the continuity of $p$ and the Morseness of elements in the following way. 

\begin{enumerate}[label = \Roman*)]
    \item If $x\in A_1$ is $M$-Morse, then $\hat{p}_A(x)$ is $M'$-Morse, where $M'$ depends on $M$ but not on $x$. More precisely, Lemma \ref{local_map:morse_bound} is satisfied. \label{cond:map_morse}
    \item Every filled neighbourhood $\U{M'}{l}{p(z)}$ of a Morse direction $p(z)$ contains the image (under $\hat{p}_A\cup p$) of a filled neighbourhood $\U{M}{k}{z}$. Recall that a filled neighbourhood is a set that contains both points in the boundary and elements of the group. More precisely, Proposition \ref{local_map:continuity} is satisfied. \label{cond:map_cont}
\end{enumerate}

We also ensure that $\hat{p}_A(e)=e$ and $\hat{p}_B(e)=e$. Condition \ref{cond:map_morse} implies that $\hat{p}_A$, $\hat{p}_B$ and $p$ induce a bijection $\bar{p} : \delta_* (A_1*B_1) \to \delta_*(A_2*B_2)$ on the combinatorial Morse boundary. Condition \ref{cond:map_cont} can be used to show that $\bar{p}$ (and analogously its inverse) is continuous. Since the combinatorial Morse boundary is homeomorphic to the Morse boundary,  it follows that $\m(A_1*B_1)$ and $\m(A_2*B_2)$ are homeomorphic.\\

\paragraph{Constructing the bijection $\hat{p}_A$} In the case that $\m A_1$ is empty, any bijection $\hat{p}_A : A_1\to A_2$ satisfies Conditions \ref{cond:map_morse} and \ref{cond:map_cont}. In that case, we define $\hat{p}_A$ as an arbitrary bijection satisfying $\hat{p}_A(e) = e$. Hence for the rest of the construction, we assume that $\abs{\m A_1}\geq 2$. In this case, we construct $\hat{p}_A$ iteratively. First, we define $\hat{p}_A(e) = e$. Next, we arbitrarily enumerate the elements of $A_1$ and $A_2$. This enumeration determines in which order we define $\hat{p}_A$. Namely, we will alternate the following two steps:
\begin{itemize}
    \item \textbf{Step 1:} Take the smallest index (with respect to the chosen enumeration) element $x\in A_1$ for which $\hat{p}_A$ is not yet defined. Define $\hat{p}_A(x)$.
    \item \textbf{Step 2:} Take the smallest index (with respect to the chosen enumeration) element $y\in A_2$ for which $\hat{p}_A^{-1}$ is not yet defined. Define $\hat{p}_A(y)^{-1}$.
\end{itemize}
The alternating between the two steps ensures that firstly, $\hat{p}_A$ is indeed a bijection. If we would not alternate, we could only guarantee that $\hat{p}_A$ is injective, but not necessarily surjective. Secondly, it allows us to have a certain symmetry between $\hat{p}_A$ and $\hat{p}_A^{-1}$ (and thus later $\bar{p}$ and $\bar{p}^{-1})$. For example we then only need to prove Condition \ref{cond:map_cont} for $\hat{p}_A$ and get the analogous statement about $\hat{p}_A^{-1}$ for free.

Now we know in which order we want to define $\hat{p}_A(x)$, but we still have to describe how we define it for a fixed $x\in A_1$. So next, we explain how to construct $\hat{p}_A(x)$ in Step 1. Recall that the construction in Section \ref{section:morse_geodesic_lines} defines for every group element $x\in A_1$ a geodesic ray $\lambda_x$ corresponding to $x$. Lemma \ref{map:morseline:porperties} states that $\lambda_x$ captures the Morseness properties of $x$ and passes close by $x$. Thus, points on the realization of $p(\lambda_x)$ are candidates for the choice of $\hat{p}_A(x)$. This guarantees that $\hat{p}_A(x)$ satisfies Condition \ref{cond:map_morse}. However, we need that $\hat{p}_A^{-1}$ also satisfies Condition \ref{cond:map_morse}. In other words, choosing $\hat{p}_A(x)$ on $p(\lambda_x)$ ensures that $\hat{p}_A(x)$ is nice enough compared to $x$ but we also need to make sure that $\hat{p}_A(x)$ is not too nice compared to $x$. Not every point on $p(\lambda_x)$ can be too nice and points closer to the basepoint $e$ are nicer than points further away. Thus the idea is that if we choose $\hat{p}_A(x)$ being far enough along $p(\lambda_x)$ Condition \ref{cond:map_morse} will be satisfied for $\hat{p}_A^{-1}$. 

It is not trivial to define what far along enough should mean. Most of the complexity comes from the fact that we are dealing with a potentially non-$\sigma$-compact Morse boundary and thus we cannot talk about ``minimal'' Morse gauges. However, we can use that $p^{-1}$ is continuous as well and hence use Lemma \ref{homeo_properties:back_match_lambda_x} to determine what far along enough means.

At least for one of the points $y$ far enough along $p(\lambda_x)$, $\hat{p}_A(y)^{-1}$ has not yet been defined (there are infinitely many points $y$ far enough along $p(\lambda_x)$ but only for finitely many of them $\hat{p}_A(y)^{-1}$ has been defined). We can choose any such $y$ and define $\hat{p}_A(x) = y$.

The construction of $\hat{p}_A^{-1}(y)$ in Step 2 is the same (with the roles of $A_1$ and $A_2$, resp $p$ and $p^{-1}$ reversed).\\

\paragraph{Proving Condition \ref{cond:map_cont}} Proving Condition \ref{cond:map_cont} requires a series of technical lemmas. The idea is to use that $x$ and $\lambda_x$ are close, Remark \ref{lemma1.16_direct} quantifies this. Similarly, $\hat{p}_A(x)$ and $p(\lambda_x)$ are close (in fact $\hat{p}_A(x)$ lies on $p(\lambda_x)$). Since we can control what happens with neighbourhoods under $p$, this closeness allows us to control what happens with filled neighbourhoods under $\hat{p}_A\cup p$.

Now we are ready to prove Theorem \ref{freeproduct_theorem} in full detail.

\subsection{Bijections on the factors}\label{section:bijections_on_factors}
If $\m A_1= \emptyset$ we let $\hat{p}$ be any bijection from $A_1$ to $A_2$ that satisfies $\hat{p}(e)=e$. Since Lemma \ref{lemma:not_one} implies that $\abs{\m A_1}\neq 1$ we may assume that $\abs{\m A_1}\geq 2$ for the rest of the section. In this case, we construct the bijection $\hat{p} : A_1 \to A_2$ as follows: Set $\hat{p}(e)=e$ and enumerate the elements of $A_1$ and $A_2$. Then, repeat the following two steps: 
\begin{itemize}
    \item \textbf{Step 1:} Denote by $x$ the lowest index element of $A_1$ which has not yet been matched. Assume $\lambda_x$ is $M$-Morse for some Morse gauge $M$ and let $\eta$ be an $h(M)$-Morse realization of $p(\lambda_x)$. Let $T_1$ be as in Lemma~\ref{map:morseline:porperties}~\ref{map:morseline:properties:3} and define $M^\prime = h(h(M)_\lambda)$. Set $T= \max\{T_1 + 4\delta_{M^\prime}, 12\delta_{M^\prime}\}$. Let $y= \eta(i)$ for some $i\in \N$ such that $\eta(i)$ is still unmatched and satisfies \eqref{cond:far_along} of Lemma \ref{homeo_properties:back_match_lambda_x}, namely 
    \begin{align}\label{map:construction:back_mapping}
        p^{-1}(\lambda_{\eta(i)}) \in \u{M^\prime}{T}{\C{\lambda_x}}.
    \end{align}
    Match $x$ to $y$, that is, set $\hat{p}(x) = y$. Figure \ref{picture:construction_of_map} depicts this construction.
     We say that $x$ is the \emph{initiator} and $y$ is the \emph{target}.
    \item \textbf{Step 2:} Denote by $y$ the lowest index element of $A_2$ which has not yet been matched. Repeat the process from Step 1, with the roles of $A_1$ and $A_2$ reversed, to construct a point $x\in A_1$. Set $\hat{p}(x) = y$. Note that in this case, $y$ is called the initiator and $x$ the target. 
\end{itemize}
\begin{figure}\centering
\includegraphics[width= \linewidth]{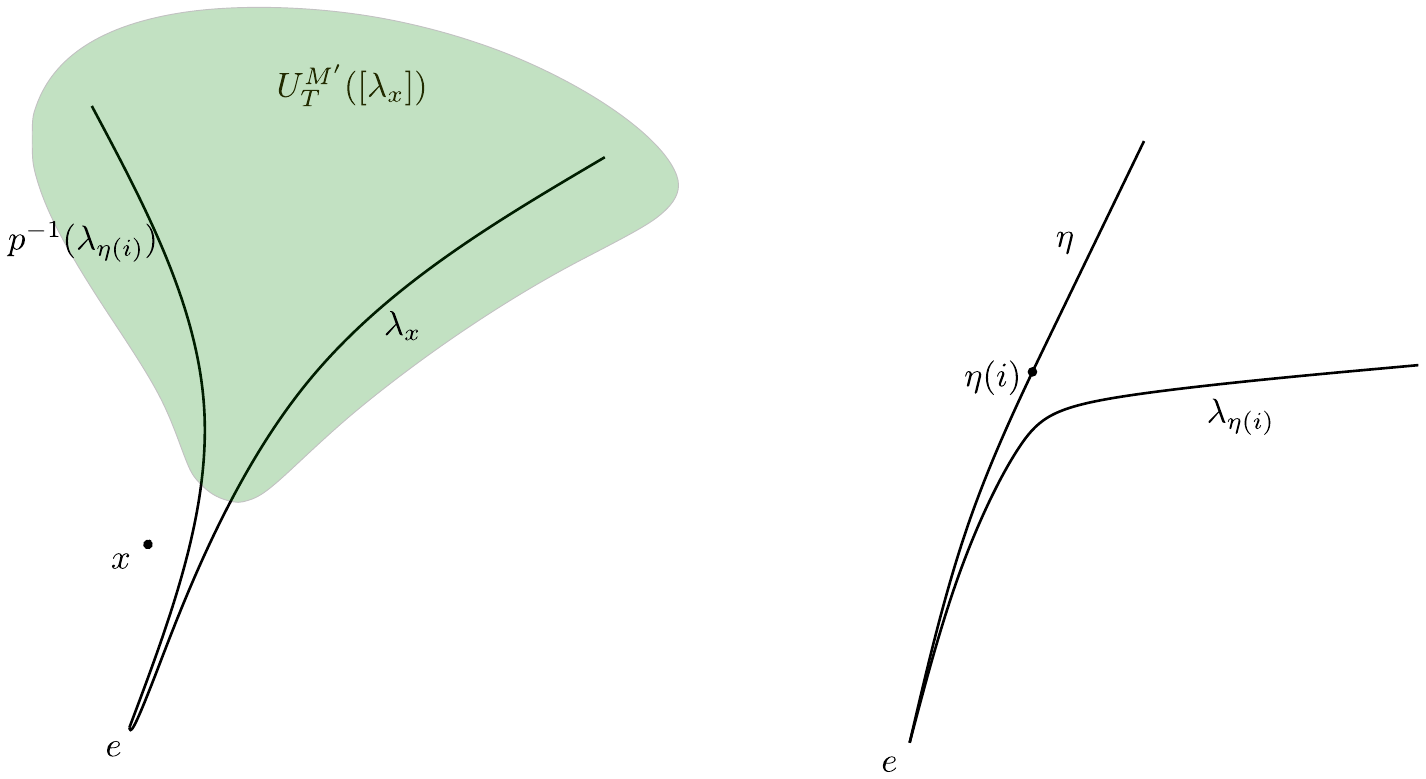}
\caption{Construction of $\hat{p}(x)$.}
\label{picture:construction_of_map}
\end{figure}

Observe that Lemma \ref{homeo_properties:back_match_lambda_x} implies all but finitely many positive integers satisfy \eqref{map:construction:back_mapping}. Since also all but finitely many elements of $A_2$ are unmatched, an element $\eta(i)$ with the desired properties indeed exists and hence $\hat{p}$ is well-defined. 

When we work with some $x\in A_1$ in the matching process, we take any Morse gauge $M$ such that $\lambda_x$ is $M$-Morse and use it to construct the target $y$. We make no assumptions that the Morse gauge $M$ is in any way ``optimal". However, it is important for the well-definedness of later constructions, that if $x$ is $N$-Morse, then $y$ is $N^\prime$-Morse where $N^\prime$ depends only on $N, h$ and $M_0$. This is why we posed the technical condition \eqref{map:construction:back_mapping}. It allows us to prove the following lemma:  

\begin{lemma}\label{local_map:morse_bound}
If $x$ is $N$-Morse, then $\hat{p}(x)$ is $N^\prime$-Morse, where $N^\prime$ depends only on $N$, $M_0$ and $h$. Similarly, if $\hat{p}(x)$ is $N$-Morse, then $x$ is $N^\prime$-Morse, where $N^\prime$ depends only on $N, M_0$ and $h$.
\end{lemma}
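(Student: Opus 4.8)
The plan is to split both implications according to whether $x$ arises as an initiator (Step 1) or as a target (Step 2), and to exploit the symmetry between the two steps: swapping $A_1\leftrightarrow A_2$ and $p\leftrightarrow p^{-1}$ leaves the construction unchanged, and both $p$ and $p^{-1}$ satisfy \eqref{eq:homoe_of_morse_strata}. So I would first reduce everything to two elementary claims about a matched pair consisting of an initiator $s$ and its target $t$, where $t$ lies on $q(\lambda_s)$ for some $q\in\{p,p^{-1}\}$ and where \eqref{map:construction:back_mapping} reads $q^{-1}(\lambda_t)\in\u{M'}{T}{\C{\lambda_s}}$: namely \emph{(E1)} if $s$ is $N$-Morse then $t$ is $N'$-Morse, and \emph{(E2)} if $t$ is $N$-Morse then $s$ is $N'$-Morse, both with $N'$ depending only on $N$, $M_0$ and $h$. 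Each of the four assertions of the lemma (two implications, two construction cases) is an instance of (E1) or (E2) with $q=p$ or $q=p^{-1}$, so it suffices to prove these two claims.

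Claim (E1) is the routine direction. If $s$ is $N$-Morse, Lemma \ref{map:morseline:porperties} i) makes $\lambda_s$ an $N_\lambda$-Morse ray with $N_\lambda=N_\lambda(N,M_0)$, so $\C{\lambda_s}$ is $N_\lambda$-Morse and \eqref{eq:homoe_of_morse_strata} gives that $q(\C{\lambda_s})$ is $h(N_\lambda)$-Morse. As $t$ lies on the realization $\eta$ of $q(\lambda_s)$, Lemma \ref{morse:bound_on_realization} forces $\eta$ to be $(h(N_\lambda))_\mathcal{C}$-Morse regardless of the (possibly far from optimal) gauge actually used in the construction; the initial subsegment of $\eta$ ending at $t$ is then a realization of $t$ that is $((h(N_\lambda))_\mathcal{C})_S$-Morse by Lemma \ref{morse:subsegments}. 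This proves (E1).

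Claim (E2) is the heart of the matter and the only place where \eqref{map:construction:back_mapping} enters. Assuming $t$ is $N$-Morse, the same reasoning as above shows that $q^{-1}(\lambda_t)$ is an $h(N_\lambda)$-Morse direction, with a realization $\nu$ that is $(h(N_\lambda))_\mathcal{C}$-Morse. Since $T\geq 12\delta_{M'}$, Remark \ref{inclusion_of_neighbourhoods} turns the membership \eqref{map:construction:back_mapping} into a statement about a single ray: there is an $M'$-Morse realization $\nu'$ of $q^{-1}(\lambda_t)$ with $d(\nu'(u),\lambda_s(u))<\delta_{M'}$ on $[0,T]$, and comparing $\nu'$ with $\nu$ through Corollary \ref{cor2.5_2.6_cor16} ii) upgrades this to $d(\nu(u),\lambda_s(u))<\delta_{M'}+\delta_{(h(N_\lambda))_\mathcal{C}}$ on $[0,T]$. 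I would then split $\lambda_s$ at the integer $T_1$ from Lemma \ref{map:morseline:porperties} iv): its tail $\lambda_s([T_1,\infty))$ is $M_1$-Morse with $M_1=M_1(M_0)$, while on $[0,T_1]$ the ray $\lambda_s$ stays uniformly close to the $(h(N_\lambda))_\mathcal{C}$-Morse geodesic $\nu$, so Lemma \ref{lem_2.5_3} i) controls the Morseness of $\lambda_s([0,T_1])$, Lemma \ref{morse_property:concatenation_of_geodesic} controls that of all of $\lambda_s$, and finally Lemma \ref{map:morseline:porperties} ii) converts this into the desired bound on $s$ itself.

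The hard part is that the construction feeds in an arbitrary, non-optimal gauge $M$ for $\lambda_s$, so a priori neither $\delta_{M'}$ nor the closeness constant $\delta_{M'}+\delta_{(h(N_\lambda))_\mathcal{C}}$ appearing above is controlled by $N,M_0,h$, and the concatenation in (E2) would inherit this bad dependence. This is exactly what the choice $T=\max\{T_1+4\delta_{M'},12\delta_{M'}\}$ is engineered to fix. When $\delta_{M'}\geq\delta_{(h(N_\lambda))_\mathcal{C}}$ one checks $T\geq 6(\delta_{M'}+\delta_{(h(N_\lambda))_\mathcal{C}})$ and $T-2(\delta_{M'}+\delta_{(h(N_\lambda))_\mathcal{C}})\geq T_1$, so Corollary \ref{cor2.5_2.6_cor16} i), applied with $\nu$ as the Morse ray, upgrades the closeness on $[0,T_1]$ to the bound $\delta_{(h(N_\lambda))_\mathcal{C}}$, which depends only on $N,M_0,h$; when instead $\delta_{M'}<\delta_{(h(N_\lambda))_\mathcal{C}}$ the crude constant is already below $2\delta_{(h(N_\lambda))_\mathcal{C}}$ and no upgrade is needed. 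Either way the tracking constant on $[0,T_1]$ depends only on $N,M_0,h$, which is precisely what lets the concatenation output a gauge $N'$ independent of $M$. I expect the only genuinely delicate work to be this inequality bookkeeping together with checking the compatibility of the successive neighbourhood and tracking estimates.
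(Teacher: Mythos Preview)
Your proposal is correct and follows essentially the same route as the paper: split into initiator/target cases, handle (E1) via Lemma~\ref{map:morseline:porperties}~\ref{map:morseline:properties:1}, \eqref{eq:homoe_of_morse_strata}, Lemma~\ref{morse:bound_on_realization} and Lemma~\ref{morse:subsegments}, and handle (E2) by using \eqref{map:construction:back_mapping} to control $\lambda_s$ on $[0,T_1]$, then concatenating with the $M_1$-Morse tail from Lemma~\ref{map:morseline:porperties}~\ref{map:morseline:properties:3} and finishing with Lemma~\ref{map:morseline:porperties}~\ref{map:morseline:properties:4}. The one place you work harder than necessary is in (E2): you introduce two separate realizations $\nu'$ (the $M'$-Morse one witnessing the neighbourhood condition) and $\nu$ (an $(h(N_\lambda))_{\mathcal C}$-Morse one), compare them via Corollary~\ref{cor2.5_2.6_cor16}~ii), and are then forced into a case split on whether $\delta_{M'}$ or $\delta_{(h(N_\lambda))_{\mathcal C}}$ dominates. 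The paper sidesteps this entirely by observing that the \emph{single} realization $\xi=\nu'$ is already $N_1:=(h(N_\lambda))_{\mathcal C}$-Morse, because its direction $p^{-1}(\lambda_t)$ is $h(N_\lambda)$-Morse and Lemma~\ref{morse:bound_on_realization} bounds the gauge of \emph{any} realization of an $h(N_\lambda)$-Morse direction. With $\xi$ playing both roles, one application of Corollary~\ref{cor2.5_2.6_cor16}~i) to this $N_1$-Morse ray (with $K=2\delta_{M'}$, $D=T$) gives $d(\xi(t),\lambda_s(t))<\delta_{N_1}$ on $[0,T-4\delta_{M'}]\supseteq[0,T_1]$ directly, and the case split disappears.
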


\begin{proof}
In this proof we use the notation from Step 1 of the construction of $\hat{p}$. Assume $x\in \P{N}{A_1}$ is an initiator. Then, $\lambda_x$ is $N_\lambda$-Morse and $p(\lambda_x)$ is $h(N_\lambda)$-Morse. Lemma \ref{morse:bound_on_realization} then implies that $\eta$ is $N^\prime = h(N_\lambda)_\mathcal{C}$-Morse. As $\hat{p}(x)$ lies on $\eta$ it is $N_S^\prime$-Morse. Note that $N_S^\prime$ depends only on $N$, $M_0$ and $h$. Therefore, part one of the lemma is true. It also shows that part two is true for any $x\in A_1$ that is a target. 

To prove the other direction, assume that $x\in A_1$ is an initiator and that $\hat{p}(x) = \eta(i)$ is $N$-Morse. Observe that $\lambda_{\eta(i)}$ is $N_\lambda$-Morse. Let $\xi\in \oh{M^\prime}{T}{\C{\lambda_x}}$ be an $M^\prime$-Morse realization of $p^{-1}(\lambda_{\eta(i)})$. 

Note that $\C\xi$ is $h(N_\lambda)$-Morse and thus $\xi$ is not only $M^\prime$-Morse but also $N_1$-Morse, where $N_1 =h(N_\lambda)_\mathcal{C}$.
Therefore, using Cor 2.5 of \cite{Cor16}, we get $d(\xi(t), \lambda_x(t))< \delta_{N_1} $ for all $t\in [0, T - 4\delta_{M^\prime}]$. Since $T-4\delta_{M^\prime}\geq T_1$, the Hausdorff distance between $\lambda_x[0, T_1]$ and $\xi[0, T_1]$ is bounded by $\delta_{N_1}$. Hence Lemma~2.5~i) of \cite{Charney_2014} implies that $\lambda_x [0, T]$ is $N_2$-Morse, where $N_2$ depends only on $N_1$. Lemma~\ref{morse_property:concatenation_of_geodesic} together with the fact that $\lambda_x[T_1, \infty)$ is $M_1$-Morse implies that $\lambda_x$ is $N_3$-Morse, where $N_3$ depends only on $M_1$ and $N_2$. By Lemma~\ref{map:morseline:porperties}~\ref{map:morseline:properties:4}, $x$ is $N_4$-Morse, where $N_4$ depends only on $N_3$ and $M_0$. One can check step by step that $N_4$ only depends on $N$, $M_0$ and $h$, which is what we wanted to show. It also shows that part one is true for the case that $x$ is a target.  
\end{proof}

In the construction, we did not match an element $x\in A_1$ to another element $y\in A_2$ directly but first matched $x$ to a geodesic, used $p$ to get a geodesic in $A_2$ and then got a point $y\in A_2$. Those intermediary geodesics are the connection between $p$ and $\hat{p}$ and are important in proving that $\hat{p}$ respects the continuity of $p$, in the sense that Proposition \ref{local_map:continuity} is satisfied. This motivates the following definition: 

For any $x\in A_i$ for $i\in \{1, 2\}$ which is an initiator define $g(x)$, \emph{the geodesic matched to} $x$, as $\C{\lambda_x}$. For any $y\in A_2$ which is a target, define $g(y)$ as $p(g(\hat{p}^{-1}(y)))$. In other words, if $y$ is a target and $y = \hat{p}(x)$, then $g(y) = p(\lambda_x)$ and $y$ lies on $g(y)$. Similarly, for any $x\in A_1$ which is a target, define $g(x)$ as $p^{-1}(g(\hat{p}(x)))$. 

\begin{lemma}
Let $x\in A_i$ for $i\in \{1, 2\}$ be $M$-Morse. Then, $g(x)$ is $M^\prime$-Morse, where $M^\prime$ depends only on $M$, $M_0$ and $h$.
\end{lemma}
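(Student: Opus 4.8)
The plan is to argue by a case distinction according to the role of $x$ in the matching construction, initiator or target, and in each case to exhibit $g(x)$ as (the image under $p^{\pm1}$ of) a Morse direction $[\lambda_z]$ attached to some initiator $z$ whose Morse gauge is already controlled. It suffices to treat $x\in A_1$, since the case $x\in A_2$ is identical after interchanging the roles of $A_1,A_2$ and of $p,p^{-1}$.

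First I would handle the \emph{initiator} case. Here $g(x)=\C{\lambda_x}$ by definition, so there is nothing to transfer across $p$: since $x$ is $M$-Morse, Lemma~\ref{map:morseline:porperties}~\ref{map:morseline:properties:1} gives that $\lambda_x$ is $M_\lambda$-Morse with $M_\lambda$ depending only on $M$ and $M_0$, whence $g(x)\in\M{M_\lambda}{A_1}$. Next I would handle the \emph{target} case. Let $y=\hat{p}(x)\in A_2$ be the initiator it was matched to in Step~2, so that $g(x)=p^{-1}(g(y))=p^{-1}(\C{\lambda_y})$ by the definition of $g$ on targets. As $x$ is $M$-Morse, Lemma~\ref{local_map:morse_bound} supplies a Morse gauge $\tilde M$, depending only on $M$, $M_0$ and $h$, with $y$ being $\tilde M$-Morse. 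Then Lemma~\ref{map:morseline:porperties}~\ref{map:morseline:properties:1} makes $\lambda_y$ into an $\tilde M_\lambda$-Morse ray, so $\C{\lambda_y}\in\M{\tilde M_\lambda}{A_2}$, and finally \eqref{eq:homoe_of_morse_strata} yields $g(x)=p^{-1}(\C{\lambda_y})\in\M{h(\tilde M_\lambda)}{A_1}$. Setting $M'=\max\{M_\lambda,h(\tilde M_\lambda)\}$ covers both sub-cases, and every gauge occurring depends only on $M$, $M_0$ and $h$.

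This is essentially a bookkeeping lemma chaining together results already proved, so I do not expect a genuine obstacle. The only points requiring care are invoking Lemma~\ref{local_map:morse_bound} in the correct direction, namely target $\to$ initiator, so that the target case is reduced to the already-settled initiator case, and pairing each application of $p^{\pm1}$ with the Morse-gauge transfer guaranteed by \eqref{eq:homoe_of_morse_strata}; tracking these dependencies is what confirms that $M'$ reduces to the three parameters $M$, $M_0$ and $h$.
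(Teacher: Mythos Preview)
Your proof is correct and follows essentially the same route as the paper: case distinction on initiator versus target, with the initiator case handled directly by Lemma~\ref{map:morseline:porperties}~\ref{map:morseline:properties:1}, and the target case reduced to the initiator case via Lemma~\ref{local_map:morse_bound} followed by the Morse-gauge transfer \eqref{eq:homoe_of_morse_strata} for $p^{-1}$. The paper's proof is virtually identical, differing only in notation.
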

\begin{proof}
If $x$ is an initiator, then the statement holds due to Lemma~\ref{map:morseline:porperties}~\ref{map:morseline:properties:1}. If 
$x\in A_1$ is a target, then Lemma~\ref{local_map:morse_bound} implies $\hat{p}(x)$ is $M_1$-Morse, where $M_1$ depends only on $M$, $M_0$ and $h$. Observe that then $\hat{p}(x)$ is an initiator, and thus $g(\hat{p}(x))$ is $M_2$-Morse, where $M_2$ depends only on $M_1$ and $M_0$. As $g(x) = p^{-1}(g(\hat{p}(x)))$, it is $h(M_2)$-Morse. Analogously we can prove the statement for a target $x\in A_2$. 
\end{proof}

Recall that $M_\lambda$ was defined in Lemma \ref{map:morseline:porperties}. The following notation will be useful: Let $M$ be a Morse-gauge, we define $M_p\geq M, M_\lambda, h(M)$ as a Morse gauge such that:
\begin{itemize}
    \item For all $x\in A_1$ that are $M$-Morse  $\hat{p}(x), g(x)$ and $\lambda_x$ are $M_p$-Morse.
    \item For all $y\in A_2$ that are $M$-Morse  $\hat{p}^{-1}(y), g(y)$ and $\lambda_y$ are $M_p$-Morse.
\end{itemize}

From now on, by abuse of notation, we denote by $p$ the map $A_1\cup \m{A_1}\to A_2\cup \m{A_2}$ induced by $\hat{p}\cup p$. Recall that for a Morse direction $z\in \M{M}{A_1}$, the filled neighbourhood $\U{M}{l}{z}$ is the $M$-Morse $l$-neighbourhood of $z$ which also includes elements of $A_1$. Our last goal of this subsection is to prove the following proposition, which states that every filled neighbourhood of a Morse direction $p(z)$ contains the image of a filled neighbourhood of $z$.

\begin{proposition}\label{local_map:continuity}
For all positive integers $l$ and $M$-Morse directions $z\in \M{M}{A_1}$ there exists an integer $k$ such that $p(\U{M}{k}{z})\subset \U{M_{p^2}}{l}{p(z)}$. 
\end{proposition}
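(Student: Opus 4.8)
My plan is to split the filled neighbourhood $\U{M}{k}{z}$ into its two kinds of members and to find a single $k$ handling both. Since $p=\hat p\cup p$ carries Morse directions to Morse directions and group elements to group elements, I treat the directions $w\in\u{M}{k}{z}$ and the points $x\in\U{M}{k}{z}\cap A_1$ separately and take the larger threshold. The directions are handled softly: $p$ restricted to $\M{M}{A_1}$ is continuous (Section~\ref{section:homeo_of_morse_boundaries}), so for the given $l$ there is $k_1$ with $p(\u{M}{k_1}{z})\subset\u{h(M)}{l+4\delta_{h(M)}}{p(z)}$; since $h(M)\le M_{p^2}$, the inclusion recorded after Remark~\ref{inclusion_of_neighbourhoods} gives $p(w)\in\u{M_{p^2}}{l}{p(z)}\subset\U{M_{p^2}}{l}{p(z)}$.

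For a point $x$ I would run the chase $x\rightsquigarrow g(x)\rightsquigarrow p(g(x))=g(\hat p(x))\rightsquigarrow\hat p(x)$ advertised in the outline. The gauge bookkeeping is the clean part and explains the exponent in $M_{p^2}$: if $x$ is $M$-Morse then $g(x)$ and $\hat p(x)$ are $M_p$-Morse (definition of $M_p$ and Lemma~\ref{local_map:morse_bound}), whence $g(\hat p(x))=p(g(x))$ is $(M_p)_p=M_{p^2}$-Morse, so the target neighbourhood is naturally measured at gauge $M_{p^2}$.

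The analytic content is three closeness estimates. First, $x\in\U{M}{k}{z}$ forces $d(e,x)\ge k$, and $x$ stays $\delta_{M_p}$-close to a realization of $g(x)$ for roughly the first $k$ steps: for an initiator this is Remark~\ref{lemma1.16_direct} applied to $g(x)=\C{\lambda_x}$, while for a target $x$ lies on a realization of $g(x)$ by construction. Combining this with the $\delta_M$-closeness of $x$ to $z$ and upgrading via Corollary~\ref{cor2.5_2.6_cor16}~i), I obtain $g(x)\in\u{M_p}{m}{z}$ with $m\to\infty$ as $k\to\infty$. Second, applying continuity of $p$ on $\M{M_p}{A_1}$ together with $h(M_p)\le M_{p^2}$, I get $g(\hat p(x))=p(g(x))\in\u{M_{p^2}}{l'}{p(z)}$ with $l'\ge l$ once $m$, hence $k$, is large enough. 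Third, because $\hat p(x)$ lies on (or, as an initiator, close to) a realization of $g(\hat p(x))$ and $d(e,\hat p(x))$ is large, Remark~\ref{lemma1.16_direct} gives $\hat p(x)\in\U{M_{p^2}}{l}{g(\hat p(x))}$; combined with $g(\hat p(x))\in\u{M_{p^2}}{l'}{p(z)}$, transitivity of the neighbourhoods (as in the proof of Lemma~\ref{morse_property:open_set_in_open_set}, via Corollary~\ref{cor2.5_2.6_cor16}) yields $\hat p(x)\in\U{M_{p^2}}{l}{p(z)}$.

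The step I expect to be the real obstacle is ensuring $\hat p(x)$ is far enough from $e$: the chase only shows that the ray $g(\hat p(x))$ tracks $p(z)$, not that $\hat p(x)$ sits far out along it, and the construction only places $\hat p(x)=\eta(i)$ with $i$ above a threshold depending on the gauge of $x$ rather than on $l$. I would dispatch this by finiteness. Let $R_0$ be the distance beyond which the third estimate applies (it depends only on $l$ and $M$); the ball $\{y\in A_2:d(e,y)\le R_0\}$ is finite, so $F=\hat p^{-1}(\{y:d(e,y)\le R_0\})$ is finite, and choosing $k>\max_{x\in F}d(e,x)$ removes every such $x$ from $\U{M}{k}{z}$, since membership forces $d(e,x)\ge k$. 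Hence every point surviving in $\U{M}{k}{z}$ has $d(e,\hat p(x))>R_0$. Taking $k$ to be the maximum of this bound, the direction threshold $k_1$, and the threshold making $m(k)$ large in the second estimate completes the proof.
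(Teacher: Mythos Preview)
Your proposal is correct and follows essentially the same route as the paper: split into boundary directions (continuity of $p$) and group elements (the chase $x\rightsquigarrow g(x)\rightsquigarrow p(g(x))=g(\hat p(x))\rightsquigarrow\hat p(x)$), with the identical finiteness trick to guarantee $d(e,\hat p(x))$ is large. The paper packages your first and third closeness estimates into the auxiliary Lemmas~\ref{local_map:g(x)_contained_in_open_set} and~\ref{local_map:match} (which in turn rest on Lemma~\ref{morse_property:open_set_in_open_set} rather than a direct appeal to Corollary~\ref{cor2.5_2.6_cor16}), while you inline them; the logical content and the gauge bookkeeping $M\to M_p\to M_{p^2}$ are the same.
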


Note that $M_{p^2}$ denotes $(M_p)_p$. Before we start with the proof of Proposition \ref{local_map:continuity} we want to highlight one of its consequences.

\begin{corollary}\label{cor:homeo_induced_by_bijection}
Let $G, H$ be finitely generated groups with homeomorphic Morse boundaries. Then, for every homeomorphism $p: \m G \to \m H$ there exists a bijection $\hat{p} : G\to H$ which induces $p$. 
\end{corollary}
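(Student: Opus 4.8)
The plan is to recognise that the corollary is essentially a repackaging of the work already done: the bijection demanded by the statement is exactly the one built in Section~\ref{section:bijections_on_factors}, and the clause ``\,$\hat p$ induces $p$\,'' is precisely what Lemma~\ref{local_map:morse_bound} and Proposition~\ref{local_map:continuity} encode. So first I would apply the construction of Section~\ref{section:bijections_on_factors} verbatim, with $A_1 := G$, $A_2 := H$ and the given homeomorphism $p$. Since Lemma~\ref{lemma:not_one} rules out $\abs{\m G} = 1$, the two cases treated there, namely $\m G = \emptyset$ and $\abs{\m G}\geq 2$, exhaust all possibilities, and (using that $G,H$ are infinite, hence countable, so that a bijection exists at all) the procedure yields a bijection $\hat p : G \to H$ with $\hat p(e)=e$. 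Nothing in that section used that $A_1,A_2$ were free factors, only that they are infinite finitely generated groups, so the construction is available here without change. It then remains to verify that $\hat p$ induces $p$ in the sense of the definition preceding the corollary. (When $\m G=\emptyset$ the induces-condition is vacuous and any bijection works, so I would dispose of that case first.)

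For the forward half of the definition, let $(x_n)_n$ be a sequence of $M$-Morse points in $G$ converging to a Morse direction $z\in\M{M}{G}$. By Lemma~\ref{local_map:morse_bound} and the definition of $M_p$, every $\hat p(x_n)$ is $M_p$-Morse, hence $M_{p^2}$-Morse, while $p(z)$ is $h(M)$-Morse by \eqref{eq:homoe_of_morse_strata}, hence also $M_{p^2}$-Morse; thus all the relevant points and the limit direction live in the single stratum $\M{M_{p^2}}{H}$. Now fix an integer $l$. Proposition~\ref{local_map:continuity} supplies an integer $k$ with $p(\U{M}{k}{z})\subset\U{M_{p^2}}{l}{p(z)}$. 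Since $(x_n)_n$ converges to $z$, all but finitely many $x_n$ lie in $\U{M}{k}{z}$, so all but finitely many $\hat p(x_n)=p(x_n)$ lie in $\U{M_{p^2}}{l}{p(z)}$. As $l$ was arbitrary, this is exactly the assertion that $(\hat p(x_n))_n$ converges to $p(z)$.

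For the backward half I would invoke the symmetry built into the construction: Steps~1 and~2 alternate with the roles of $G$ and $H$ (resp.\ $p$ and $p^{-1}$) interchanged, so $\hat p^{-1}$ is produced by the identical procedure applied to $p^{-1}$, and the analogue of Proposition~\ref{local_map:continuity} holds with $z'\in\M{M}{H}$, $p^{-1}$ and $\hat p^{-1}$ replacing $z$, $p$ and $\hat p$. Re-running the previous paragraph with these substitutions shows that any $M$-Morse sequence $(x_n')_n$ in $H$ converging to $z'$ is carried by $\hat p^{-1}$ to a sequence converging to $p^{-1}(z')$. The two halves together are precisely the statement that $\hat p$ induces $p$.

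I do not expect a serious obstacle, since the heavy lifting already lives in Lemma~\ref{local_map:morse_bound} and Proposition~\ref{local_map:continuity}; the only delicate points are bookkeeping. The main one is confirming that a \emph{single} Morse gauge, namely $M_{p^2}$, simultaneously contains all images $\hat p(x_n)$ and the limit $p(z)$, so that the gauge-relative notion of convergence in the target genuinely applies; this is where I would use the nesting $M\le M_p\le M_{p^2}$ together with $h(M)\le M_p$. The only other thing to be careful about is that the construction, and hence the bijection, is invoked for $G$ and $H$ themselves rather than for free factors, which is immediate from the remark above.
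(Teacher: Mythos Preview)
Your proposal is correct and follows essentially the same route as the paper: invoke the construction of Section~\ref{section:bijections_on_factors} with $A_1=G$, $A_2=H$, dispose of the empty-boundary case trivially, and then derive the convergence condition directly from Proposition~\ref{local_map:continuity}, using the built-in symmetry of Steps~1 and~2 for the inverse direction. Your extra care in locating a single gauge $M_{p^2}$ housing both the images $\hat p(x_n)$ and the limit $p(z)$ is a detail the paper leaves implicit in the conclusion of Proposition~\ref{local_map:continuity}, but otherwise the arguments coincide.
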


\begin{proof}
If $\m G$ is empty, the statement follows trivially. So, we assume that $\m G$ is non-empty. Let $p : \m G\to \m H$ be a homeomorphism. We use the construction from above to get a bijection $\hat{p} : G\to H$ and want to prove that $\hat{p}$ induces $p$. 

Let $M$ be a Morse gauge and let $(x_n)_n$ be a sequence of points in $G$ that converges to some Morse direction $z\in \m G$. We need to show that the image of $(x_n)_n$ converges to $p(z)$. Let $l$ be any integer. Proposition \ref{local_map:continuity} implies that there exists an integer $k$ such that  $(p\cup \hat{p})(\U{M}{k}{z})\subset \U{M_{p^2}}{l}{p(z)}$. Since the sequence $(x_n)_n$ converges to $z$, all but finitely many elements of $(x_n)_n$ are contained in $(\U{M}{k}{z})$. Hence, all but finitely many elements of $(\hat{p}(x_n))_n$ are contained in $\U{M_{p^2}}{l}{p(z)}$, which implies that the sequence $(\hat{p}(x_n))_n$ indeed converges to $p(z)$. Since $\hat{p}$ and $\hat{p}^{-1}$ are constructed analogously, the argument can be repeated for $\hat{p}^{-1}$. Thus, the bijection $\hat{p}$ indeed induces the homeomorphism $p$.

\end{proof}
To prove Proposition \ref{local_map:continuity} we need the following two technical lemmas.

\begin{lemma}\label{local_map:g(x)_contained_in_open_set}
Let $k$ be a positive integer. For all $x\in \P{M}{A_1}$ satisfying $d(e, x)\geq C_M(k+4\delta_{M_p}$) we have
\[
g(x)\in \u{M_p}{k}{x} \qquad\text{and}\qquad x\in \U{M_p}{k}{g(x)}.
\]
\end{lemma}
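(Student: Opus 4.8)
The plan is to reduce both containments to a single closeness estimate between a realization of the point $x$ and a realization of the direction $g(x)$, and then to pass from that single-realization estimate to the required neighbourhoods via Remark~\ref{inclusion_of_neighbourhoods}. First I would record the two trivial inputs: since $x$ is $M$-Morse and $M\le M_p$ the point $x$ is $M_p$-Morse, and by the definition of $M_p$ the direction $g(x)$ is $M_p$-Morse. The core claim I would then establish is the uniform statement that there is an $M_p$-Morse realization $\zeta$ of the point $x$ and an $M_p$-Morse realization $\eta$ of $g(x)$ with
\begin{align}
    d(\zeta(t),\eta(t))<\delta_{M_p}\qquad\text{for all } t\in[0,k+4\delta_{M_p}].
\end{align}
Granting this, $\zeta\in\Oh{M_p}{k+4\delta_{M_p}}{\eta}$ and $\eta\in\oh{M_p}{k+4\delta_{M_p}}{\zeta}$, so Remark~\ref{inclusion_of_neighbourhoods} (in its filled form relative to the direction $g(x)$, and in its form relative to the point $x$) yields $\zeta\in\Oh{M_p}{k}{g(x)}$ and $\eta\in\oh{M_p}{k}{x}$; that is, $x\in\U{M_p}{k}{g(x)}$ and $g(x)\in\u{M_p}{k}{x}$, as wanted. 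Here I would assume $k\ge 12\delta_{M_p}$, which is harmless since for smaller $k$ the neighbourhoods only grow.

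The estimate is obtained differently depending on whether $x$ is an initiator or a target, because $g(x)$ is defined differently in the two cases. If $x$ is an \emph{initiator}, then $g(x)=\C{\lambda_x}$, so I take $\eta=\lambda_x$ and let $\zeta$ be any $M$-Morse realization of $x$ (which exists as $x\in\P{M}{A_1}$, and is automatically $M_p$-Morse). Lemma~\ref{map:morseline:porperties}~\ref{map:morseline:properties:2}, applied with $T=k+4\delta_{M_p}$, gives $d(\lambda_x(t),\zeta(t))<\delta_M\le\delta_{M_p}$ for all $t\in[0,k+4\delta_{M_p}]$ precisely under the hypothesis $d(e,x)\ge C_M(k+4\delta_{M_p})$. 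This is exactly the desired estimate, so the initiator case is immediate.

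If $x$ is a \emph{target}, then by construction $x=\eta(i)$ lies on the ray $\eta$ used to define it, where $\eta$ is an $h(M)$-Morse — hence $M_p$-Morse — realization of $p^{-1}(\lambda_{\hat p(x)})=g(x)$. Again let $\zeta$ be an $M$-Morse (hence $M_p$-Morse) realization of $x$. Now $\zeta$ and $\eta|_{[0,i]}$ are two geodesics from $e$ to $x$, so Lemma~\ref{lem_2.5_3}~iii) (applied to the $M_p$-Morse geodesic $\eta$) shows that $\zeta$ and $\eta|_{[0,i]}$ are uniformly close, and Corollary~\ref{cor2.5_2.6_cor16}~i) then upgrades this to $d(\zeta(t),\eta(t))<\delta_{M_p}$ on an initial segment whose length tends to infinity with $d(e,x)$; in particular it covers $[0,k+4\delta_{M_p}]$ once $d(e,x)\ge C_M(k+4\delta_{M_p})$. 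Crucially, this comparison only uses that $\eta$ itself (not the prefix $\eta|_{[0,i]}$) is $M_p$-Morse, which sidesteps any loss of the Morse gauge under passing to subsegments.

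The step I expect to be the main obstacle — and the reason the argument is routed through Remark~\ref{inclusion_of_neighbourhoods} rather than carried out by hand — is the passage from closeness to \emph{one} realization to closeness to \emph{all} realizations. A naive triangle inequality, combining $d(\zeta,\eta)<\delta_{M_p}$ with the bound $d(\eta,\xi)<\delta_{M_p}$ for another $M_p$-Morse realization $\xi$ of $g(x)$ coming from Corollary~\ref{cor2.5_2.6_cor16}~ii), only yields $d(\zeta,\xi)<2\delta_{M_p}$, which is too weak for membership in $\Oh{M_p}{k}{g(x)}$. The extra $4\delta_{M_p}$ built into the threshold $C_M(k+4\delta_{M_p})$ is exactly what is needed to absorb the $+4\delta_{M_p}$ shift in Remark~\ref{inclusion_of_neighbourhoods}, which performs this single-to-all upgrade correctly. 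The remaining work is then the bookkeeping of these constants and checking that the same $\zeta$ and $\eta$ serve both containments simultaneously.
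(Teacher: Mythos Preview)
Your overall plan is the paper's: establish a single closeness estimate between realizations of $x$ and of $g(x)$, then invoke Remark~\ref{inclusion_of_neighbourhoods} to pass from one realization to all. In the initiator case your argument is identical to the paper's.

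In the target case the paper takes a shorter route. Since $x$ lies on a realization $\xi$ of $g(x)$, the paper simply takes $\eta=\xi|_{[0,d(e,x)]}$ as the realization of $x$; then $d(\eta(t),\xi(t))=0$ for all $t\le d(e,x)$, and the two containments $\eta\in\Oh{M_p}{d(e,x)}{\xi}$, $\xi\in\oh{M_p}{d(e,x)}{\eta}$ are immediate, with no appeal to Lemma~\ref{lem_2.5_3} or Corollary~\ref{cor2.5_2.6_cor16}. Your detour through a separate $M$-Morse realization $\zeta$ of $x$ is motivated by the worry that restricting $\xi$ costs a subsegment constant $(M_p)_S$; that concern is legitimate, but the paper (implicitly) absorbs it into the choice of $M_p$, which is only required to be \emph{some} gauge with the listed properties.

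There is one genuine slip in your target case. You assert that the ray $\eta$ from the construction is $h(M)$-Morse, where $M$ is the gauge of $x$. But in Step~2 of the construction the gauge used is chosen for the \emph{initiator} $y=\hat p(x)\in A_2$: one picks some $M'$ with $\lambda_y$ $M'$-Morse and takes $\eta$ to be an $h(M')$-Morse realization of $p^{-1}(\lambda_y)$. There is no reason this $M'$ is related to $M$, so ``$\eta$ is $h(M)$-Morse, hence $M_p$-Morse'' is unjustified. The fix is to argue instead from the definition of $M_p$ that the \emph{direction} $g(x)$ is $M_p$-Morse, and then control the gauge of the particular realization $\eta$ via Lemma~\ref{morse:bound_on_realization}; the resulting constant can be absorbed into $M_p$. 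Once that is done, your route and the paper's are both valid, with the paper's being the shorter of the two.
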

\begin{proof}

Assume $x$ is an initiator. Then, $g(x) = \C{\lambda_x}$. Let $\eta$ be an $M$-Morse realization of $x$. Remark~\ref{lemma1.16_direct} shows that $\eta\in\Oh{M_\lambda}{k+ 4\delta_{M_p}}{\lambda_x}$. This implies that $\eta \in\Oh{M_p}{k}{\C{\lambda_x}}$ and that $\lambda_x\in \oh{M_\lambda}{k+ 4\delta_{M_p}}{\eta}$. Consequently, $\lambda_x\in\oh{M_p}{k}{x}$. 

Assume $x$ is a target. Then, there exists an $M_p$-Morse realization $\xi$ of $g(x)$ that goes through $x$. Note that $\eta = \xi[0, d(e, x)]$ is an $M_p$-Morse realization of $x$ satisfying $\eta\in\Oh{M_p}{d(e, x)}{\xi}$ and $\xi\in\oh{M_p}{d(e, x)}{\eta}$. Thus $\eta\in\Oh{M_p}{k}{g(x)}$ and $\xi\in\oh{M_p}{k}{x}$.
\end{proof}

\begin{lemma}\label{local_map:match}
Let $l$ be a positive integer and let $z\in \M{M}{A_1}$ be an $M$-Morse direction. There exists an integer $k$ such that
\begin{enumerate}[label = \roman*)]
    \item for all $x\in\U{M}{k}{z}\cap A_1$ we have $g(x)\in \U{M_p}{l}{z}$.\label{local_map:match_g(x)_if_x}
    \item all $x\in \P{M}{A_1}$ with $d(e, x)\geq k$ and $g(x)\in \U{M_p}{k}{z}$ satisfy $x\in \U{M_p}{l}{z}$.\label{local_map:match_x_if_g(x)}
\end{enumerate}
\end{lemma}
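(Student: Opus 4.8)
The plan is to derive both statements from a single mechanism: the mutual closeness of $x$ and $g(x)$ recorded in Lemma~\ref{local_map:g(x)_contained_in_open_set}, combined with the transitivity of closeness furnished by Corollary~\ref{cor2.5_2.6_cor16}~i) and the realization-independence of the neighbourhoods encoded in Remark~\ref{inclusion_of_neighbourhoods}. Throughout I would work in the single Morse gauge $M_p$: since $z$ is $M$-Morse and every $x$ under consideration is $M$-Morse, each realization of $z$ and of $g(x)$ is $M_p$-Morse, so $\delta_{M_p}$ serves as a uniform closeness threshold for all geodesics involved. After possibly enlarging $l$ (which only shrinks the target neighbourhood) I may assume $l\geq 12\delta_{M_p}$, and I fix once and for all an $M$-Morse realization $\xi_z$ of $z$. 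By Remark~\ref{inclusion_of_neighbourhoods} we have $\Oh{M_p}{l+4\delta_{M_p}}{\xi_z}\subset\Oh{M_p}{l}{z}$, so to place a geodesic into $\U{M_p}{l}{z}$ it suffices to show that it is $\delta_{M_p}$-close to $\xi_z$ on $[0,l+4\delta_{M_p}]$.

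For part~\ref{local_map:match_g(x)_if_x} I would set $k'=\max\{l+8\delta_{M_p},\,12\delta_{M_p}\}$ and then $k=\max\{k',\,C_M(k'+4\delta_{M_p})\}$. Given $x\in\U{M}{k}{z}\cap A_1$, let $\eta_x$ be its $M$-Morse realization; since $\eta_x$ must be $\delta_M$-close to $\xi_z$ on $[0,k]$, its length satisfies $d(e,x)\geq k\geq C_M(k'+4\delta_{M_p})$, so Lemma~\ref{local_map:g(x)_contained_in_open_set} applies and gives $g(x)\in\u{M_p}{k'}{x}$. Choosing an $M_p$-Morse realization $\sigma$ of $g(x)$, this means $\sigma$ is $\delta_{M_p}$-close to $\eta_x$ on $[0,k']$, while $\eta_x$ is $\delta_M\leq\delta_{M_p}$-close to $\xi_z$ on $[0,k]\supseteq[0,k']$. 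The triangle inequality yields $d(\sigma(t),\xi_z(t))<2\delta_{M_p}$ on $[0,k']$, and since $\sigma(0)=\xi_z(0)=e$ and $k'\geq 12\delta_{M_p}$, Corollary~\ref{cor2.5_2.6_cor16}~i) (with $K=2\delta_{M_p}$) upgrades this to $\delta_{M_p}$-closeness on $[0,k'-4\delta_{M_p}]\supseteq[0,l+4\delta_{M_p}]$. By the reduction above this places $g(x)\in\U{M_p}{l}{z}$.

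Part~\ref{local_map:match_x_if_g(x)} I would handle symmetrically, now using the other inclusion of Lemma~\ref{local_map:g(x)_contained_in_open_set}. Put $k''=\max\{l+8\delta_{M_p},\,12\delta_{M_p}\}$ and $k=\max\{k'',\,C_M(k''+4\delta_{M_p})\}$. For $x\in\P{M}{A_1}$ with $d(e,x)\geq k$ and $g(x)\in\U{M_p}{k}{z}$, Lemma~\ref{local_map:g(x)_contained_in_open_set} gives $x\in\U{M_p}{k''}{g(x)}$, i.e.\ the $M$-Morse realization $\eta_x$ of $x$ is $\delta_{M_p}$-close to a realization $\sigma$ of $g(x)$ on $[0,k'']$; meanwhile $g(x)\in\U{M_p}{k}{z}$ forces $\sigma$ to be $\delta_{M_p}$-close to $\xi_z$ on $[0,k]\supseteq[0,k'']$. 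Again the triangle inequality gives $2\delta_{M_p}$-closeness of $\eta_x$ and $\xi_z$ on $[0,k'']$, and Corollary~\ref{cor2.5_2.6_cor16}~i) returns $\delta_{M_p}$-closeness on $[0,l+4\delta_{M_p}]$, so $x\in\U{M_p}{l}{z}$.

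The routine parts are the arithmetic of the indices and the two triangle inequalities; the only genuine care needed is the bookkeeping across the filled-neighbourhood conventions — specifically, that membership in $\U{\cdot}{\cdot}{z}$ and in $\u{M_p}{\cdot}{x}$ is closeness to \emph{every} admissible realization, which is exactly what makes fixing $\xi_z$ and invoking Remark~\ref{inclusion_of_neighbourhoods} legitimate — together with the observation that $x\in\U{M}{k}{z}$ forces $d(e,x)\geq k$, which is what makes the distance hypothesis of Lemma~\ref{local_map:g(x)_contained_in_open_set} available. I expect the main (if modest) obstacle to be checking that the single constant $k$ works uniformly for both initiators and targets; this is resolved by phrasing everything through a generic $M_p$-Morse realization $\sigma$ of $g(x)$ rather than through $\lambda_x$, since Lemma~\ref{local_map:g(x)_contained_in_open_set} already treats the two cases on equal footing.
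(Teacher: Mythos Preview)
Your argument is correct and follows the same two-step mechanism as the paper: first invoke Lemma~\ref{local_map:g(x)_contained_in_open_set} to place $x$ and $g(x)$ in each other's $M_p$-neighbourhoods, then use a transitivity-of-closeness step to transfer this to a neighbourhood of $z$. The only difference is packaging: where you carry out the transitivity by hand via the triangle inequality, Corollary~\ref{cor2.5_2.6_cor16}~i), and Remark~\ref{inclusion_of_neighbourhoods}, the paper simply invokes Lemma~\ref{morse_property:open_set_in_open_set} (whose proof is exactly that computation) to obtain a single $k'$ with $\U{M_p}{k'}{y}\subset\U{M_p}{l}{z}$ for all $y\in\U{M_p}{k'}{z}$, and then sets $k=C_M(k'+2\delta_{M_p})$. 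This makes the paper's proof a couple of lines, but the content is the same; your explicit version has the virtue of making the constants transparent.
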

\begin{proof}
Lemma~\ref{morse_property:open_set_in_open_set} implies that there exists an integer $k^\prime$ such that for all $y\in\U{M_p}{k^\prime}{z}$ we have 
\begin{align}
\u{M_p}{k^\prime}{y}\subset \U{M_p}{l}{z}. \label{lemma3.8_eq1}
\end{align}
Set $k = C_M(k^\prime +2\delta_{M_p})$. Suppose $x\in\U{M}{k}{z}\cap A_1$ and let $\xi$ be an $M$-Morse realization of $z$. We have $x\in \U{M}{k}{\xi}\subset\U{M}{k^\prime + 2\delta_{M_p}}{\xi}\subset \U{M_p}{k^\prime + 2\delta_{M_p}}{\xi}\subset\U{M_p}{k^\prime}{z}$. Due to Lemma~\ref{local_map:g(x)_contained_in_open_set} we have $g(x)\in \U{M_p}{k^\prime}{x}$ and thus \eqref{lemma3.8_eq1} concludes the proof of i). 

Suppose $g(x)\in \U{M_p}{k^\prime}{z}$ and $d(e, x)\geq k$. Lemma \ref{local_map:g(x)_contained_in_open_set} shows that $x\in \U{M_p}{k^\prime}{g(x)}$ and thus $\eqref{lemma3.8_eq1}$ also concludes the proof of ii).
\end{proof}

\begin{proof}[Proof of Proposition \ref{local_map:continuity}] Let $z\in \M{M}{A_1}$ be an $M$-Morse direction and $l$ a positive integer.
First, use Lemma~\ref{local_map:match}~\ref{local_map:match_x_if_g(x)} to get an integer $k_1$ such that all $y\in \P{M_p}{A_1}$ with $d(e, y)\geq k_1 $ and $g(y) \in \U{M_{p^2}}{k_1}{p(z)}$ satisfy $y\in\U{M_{p^2}}{l}{p(z)}$.
Let $k_2$ be such that $p(\u{M_p}{k_2}{z})\subset \u{M_{p^2}}{k_1}{p(z)}$. Then, use Lemma~\ref{local_map:match}~\ref{local_map:match_g(x)_if_x} to get an integer $k_3$ such that for all $x\in \U{M}{k_3}{z}\cap A_1$ we have that $g(x)\in\u{M_p}{k_2}{z}$. 
Finally, let $k\geq k_3$ such that all $x\in A_1$ with $d(e, x)\geq k$ satisfy $d(e, p(x))\geq  k_1$ and such that $p(\u{M}{k}{z})\subset\u{M_{p^2}}{l}{p(z)}$.

This choice of $k$ works: Let $x\in \U{M}{k}{z}$ we want to show that $p(x) \in \U{M_{p^2}}{l}{p(z)}$. If $x\in \m A_1$, the statement follows directly. Therefore, we assume $x\in A_1$. By choice of $k_3$ we have $g(x)\in \u{M_p}{k_2}{z}$ and by choice of $k_2$ we have $p(g(x)) \in \u{M_{p^2}}{k_1}{p(z)}$. Note that, by the choice of $k$, $d(e, p(x))\geq k_1$ and thus $p(x)\in \U{M_{p^2}}{l}{p(z)}$.
\end{proof}

Proposition~\ref{local_map:continuity} also holds if $\m A_1 = \emptyset$, as any statement about elements of the empty set holds. Furthermore, if $\m A_1 = \emptyset$, then $\M{M}{A}$ is finite for any Morse gauge $M$. Therefore, Lemma \ref{local_map:morse_bound} also holds in that case.  Apart from $\hat{p}(e)=e$, those are the only properties about $\hat{p}$ that we will use later on. Thus, from now on, we do not have to make a case distinction whether $\m A_1$ is empty or not. 

\subsection{Proof of Theorem \ref{freeproduct_theorem}}
We define the map $\bar{p} : \cm_1 \to \cm_2$ via 
\begin{align}
    \bar{p}(u_1, \ldots, u_n; x) &= (\hat{p}(u_1), \ldots, \hat{p}(u_n); p(x))
\end{align}
and
\begin{align}
    \bar{p}((u_i)_{i\in \N}) &= (\hat{p}(u_i))_{i\in\N}.
\end{align}

The map $\bar{p}$ is well defined as the image of any combinatorial $M$-Morse geodesic is a combinatorial $M_p$-Morse geodesic (and as $\hat{p}(e)= e$). The map is a bijection as $p$ and $\hat{p}$ are a bijection. We are now ready to prove Theorem \ref{freeproduct_theorem}

\begin{proof}[Proof of Theorem \ref{freeproduct_theorem}]
It is enough to show that $\overline{p}$ is continuous since then, by symmetry, $\bar{p}^{-1}: \cm_2 \to \cm_1$ is continuous as well. 

To do so, we will show that for any Morse gauge $M$, $x\in\co{M}_1$ and $l\in\N$ there exists an integer $k\in \N$ such that $\bar{p}(\vh{M}{k}{x})\subset\vh{M_{p^2}}{l}{\bar{p}(x)}$.

Case 1: $x$ is in $\T{M}_1$: In this case, we can set $k=l$. Indeed: Let $x^\prime \in  \vh{M}{l}{x}$, then $x^\prime$ is an $M$-Morse combinatorial geodesic with $l(x^\prime)\geq l+1$ and $u_i(x^\prime) = u_i(x)$ for all $1\leq i \leq l$. Thus $\bar{p}(x^\prime)$ is an $M_p$-Morse combinatorial geodesic with $l(\bar{p}(x^\prime))\geq l+1$ and $u_i(\bar{p}(x^\prime)) = u_i(\bar{p}(x))$. Hence $\bar{p}(x^\prime)\in \vh{M_{p^2}}{l}{\bar{p}(x)}$.

Case 2: $x$ is in $\stab{M}_1$: We use Proposition~\ref{local_map:continuity} to get an integer $k$ such that $p(\U{M}{k}{\gamma(x)})\subset \U{M_{p^2}}{l}{\gamma(\bar{p}(x))}$. Let $x^\prime \in \vh{M}{k}{x}$. Observe that $\bar{p}(x^\prime)$ is $M_{p^2}$ Morse. Furthermore:
\begin{enumerate}[label= \roman*)]
    \item $l(x^\prime)\geq l(x)$ and thus $l(\bar{p}(x^\prime))\geq l(\bar{p}(x))$.
    \item $u_i(x^\prime) = u_i(x)$ for all $1\leq i \leq l(x)$ and thus $u_i(\bar{p}(x^\prime)) = u_i(\bar{p}(x))$ for all $1\leq i \leq l(\bar{p}(x))$.
    \item if $l(x^\prime) > l(x)$, then $u_{l(x)+1}(x^\prime) \in \U{M}{k}{\gamma(x)}$ $l(x^\prime) > l(x)$ and therefore $u_{l(x)+1}(\bar{p}(x^\prime))\in \U{M_p}{l}{\gamma(\bar{p}(x))}$.
    \item if $l(x^\prime) = l(x)$, then $\gamma(x^\prime) \in \U{M}{k}{\gamma(x)}$ and thus $\gamma(\bar{p}(x^\prime))\in \U{M}{l}{\gamma(\bar{p}(x))}$.
\end{enumerate}
Therefore, $\bar{p}(x^\prime)\in \vh{M_{p^2}}{l}{\bar{p}(x)}$, which concludes the proof. 
\end{proof}

\section{Proof of Theorem \ref{thm:graph_of_groups}}\label{chapter:graph_of_groups}
The proof of Theorem \ref{thm:graph_of_groups} works the same way as the proof of Theorem 1.1 of \cite{martin2013hyperbolic} and follows from the Theorem of P. Papasoglu and K. Whyte stated below (see \cite{papasoglu2002quasi}). We will state a slight adaptation of the version used in \cite{martin2013hyperbolic}. 
\begin{theorem}[Papasoglu-Whyte]
Let $G, H$ be finitely generated groups with infinitely many ends and let $\mathcal{G}, \mathcal{H}$ be their graph of groups decompositions with all edge groups finite. If $\mathcal{G}$ and $\mathcal{H}$ have the same set of quasi-isometry types of infinite vertex groups (without multiplicities) then $G$ and $H$ are quasi-isometric. In particular, if $G_1, \ldots, G_m$ represent all quasi-isometry types of the infinite vertex groups of $\mathcal{G}$ which are not virtually cyclic, 
\begin{itemize}
    \item if $m=0$, then $G$ is quasi-isometric to the free group $F_2$;
    \item if $m=1$, then $G$ is quasi-isometric to $G_1\ast G_1$;
    \item if $m>1$, then $G$ is quasi-isometric to $G_1\ast \cdots \ast G_m$.
\end{itemize}
\end{theorem}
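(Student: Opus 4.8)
The plan is to treat the first assertion (the general statement that equality of the sets of quasi-isometry types of infinite vertex groups forces $G$ and $H$ to be quasi-isometric) as a black box: it is an adaptation of Theorem~0.3 of \cite{papasoglu2002quasi}, so I would not reprove it, but rather deduce the three bulleted cases from it. In each case the strategy is the same: exhibit a finitely generated group, presented as an explicit free product with a graph-of-groups decomposition having finite edge groups, whose set of quasi-isometry types of infinite vertex groups is literally equal to that of $\mathcal{G}$, and then invoke the first assertion. The one thing that obstructs a direct comparison is that the free-product models below carry no infinite virtually cyclic vertex groups, whereas $\mathcal{G}$ may; so I first massage $\mathcal{G}$ to remove these.

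First I would replace $\mathcal{G}$ by a decomposition $\mathcal{G}'$ of the \emph{same} group $G$, still with finite edge groups, in which every vertex group is either finite or not virtually cyclic. This is possible because every two-ended vertex group $V$ is itself the fundamental group of a finite graph of finite groups: a finitely generated two-ended group has a finite normal subgroup $F$ with quotient $\Z$ or $D_\infty$, so $V$ is either an HNN extension of $F$ over $F$, or an amalgam of two finite groups over a finite group. Substituting this finite subgraph at each two-ended vertex is a legitimate refinement: it keeps $G$ unchanged, keeps all edge groups finite, leaves the non-virtually-cyclic vertex groups $G_1,\dots,G_m$ untouched, and eliminates every infinite virtually cyclic vertex group. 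Hence the set of quasi-isometry types of infinite vertex groups of $\mathcal{G}'$ is exactly $\{[G_1],\dots,[G_m]\}$.

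Next I would match $\mathcal{G}'$ against an explicit model. For $m=0$, take $F_2$ with its rose decomposition (one trivial vertex group, trivial edge groups): it has infinitely many ends and no infinite vertex groups, so both it and $\mathcal{G}'$ have empty set of infinite-vertex-group types and the first assertion yields that $G$ is quasi-isometric to $F_2$ (equivalently, $\mathcal{G}'$ now realizes $G$ as the fundamental group of a finite graph of finite groups, hence virtually free and, being non-elementary, quasi-isometric to $F_2$). For $m=1$, take $G_1\ast G_1$ with its two-vertex decomposition (vertex groups $G_1, G_1$, trivial edge group); the doubling is exactly what forces infinitely many ends while keeping the infinite-vertex-group type set equal to $\{[G_1]\}$. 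For $m>1$, take $G_1\ast\cdots\ast G_m$ with the analogous star decomposition, whose infinite vertex groups are precisely $G_1,\dots,G_m$. In every case the model has infinitely many ends (a free product of two or more infinite groups does) and the same set of quasi-isometry types of infinite vertex groups as $\mathcal{G}'$, so the first assertion delivers the asserted quasi-isometry.

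The genuine difficulty is entirely absorbed into the first assertion, which I do not reprove. Within the deduction, the step demanding care is the passage to $\mathcal{G}'$: I must verify that each infinite virtually cyclic vertex group really does split as a finite graph of finite groups, that the substitution is a valid refinement (so $G$ and the finiteness of edge groups are preserved), and that this introduces no new infinite vertex-group type and loses none of the $[G_i]$. I expect the only places needing a second look to be the degenerate case $m=0$ (empty type set, and the consistency of the two routes to $F_2$) and the bookkeeping when a factor $G_i$ happens to be non-virtually-cyclic yet virtually free, where one should check that the model free product is still quasi-isometric to the claimed target.
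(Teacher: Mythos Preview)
The paper does not prove this theorem at all: it is stated as a known result, attributed to Papasoglu--Whyte \cite{papasoglu2002quasi} (in the form used in \cite{martin2013hyperbolic}), and then applied directly in the proof of Theorem~\ref{thm:graph_of_groups}. There is therefore no ``paper's own proof'' to compare your proposal against.

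That said, your deduction of the three bulleted cases from the first assertion is correct and is the natural way to see why the ``in particular'' clause follows. The refinement step---replacing each two-ended vertex group by a graph of finite groups so that the modified decomposition $\mathcal{G}'$ has infinite-vertex-group type set exactly $\{[G_1],\dots,[G_m]\}$---is exactly what is needed to match the type sets of the explicit free-product models, since those models carry no virtually cyclic vertex groups. Your handling of the three cases is sound; the only cosmetic point is that in the $m=0$ case you have two arguments (apply the first assertion, or observe directly that $G$ is virtually free of rank $\geq 2$), and either suffices on its own.
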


\begin{proof}[Proof of Theorem \ref{thm:graph_of_groups}]
The Morse boundary is a quasi-isometry invariant (see \cite{Cor16}). If $m=0$ for the group $G$, then $m=0$ for the group $H$, as a group is quasi-isometric to a virtually cyclic group if and only if it is virtually cyclic itself. In the other two cases, $G$ is quasi-isometric to $G_1\ast\cdots \ast G_m$ and $H$ is quasi-isometric to $H_1\ast \cdots \ast H_m$ (here we set $G_2 = G_1$ if $m=1$ and do the same for $H$). We can assume that the groups $H_1, \ldots , H_m$ are ordered in a way such that $H_i$ is quasi-isometric to $G_i$. Therefore $H_i$ and $G_i$ have homeomorphic Morse boundaries. Using induction and Theorem~\ref{freeproduct_theorem} as base-case and step, we get that $\m{(G_1\ast \cdots \ast G_m)}\cong \m{(H_1\ast \cdots \ast H_m)}$. The theorem follows from the Morse boundary being a quasi-isometry invariant. 
\end{proof}

\bibliographystyle{alpha}
\bibliography{sources}

\begin{thebibliography}{CCM19}

\bibitem[BH09]{hyperbolic_spaces}
Martin Bridson and Andr'e Haefliger.
\newblock {\em Metric Spaces of Non-Positive Curvature}, volume 319.
\newblock Springer-Verlag Berlin Heidelberg, 01 2009.

\bibitem[CCM19]{Charney_2019}
Ruth Charney, Matthew Cordes, and Devin Murray.
\newblock Quasi‐mobius homeomorphisms of morse boundaries.
\newblock {\em Bulletin of the London Mathematical Society}, 51(3):501–515,
  Mar 2019.

\bibitem[CCS20]{charney2020complete}
Ruth Charney, Matthew Cordes, and Alessandro Sisto.
\newblock Complete topological descriptions of certain morse boundaries, 2020.

\bibitem[CD19]{CD16}
Matthew Cordes and Matthew~Gentry Durham.
\newblock Boundary convex cocompactness and stability of subgroups of finitely
  generated groups.
\newblock {\em International Mathematics Research Notices}, 2019(6):1699--1724,
  2019.

\bibitem[CH17]{CH17}
Matthew Cordes and David Hume.
\newblock Stability and the morse boundary.
\newblock {\em Journal of the London Mathematical Society}, 95(3):963--988,
  2017.

\bibitem[CM19]{CM19}
Christopher Cashen and John Mackay.
\newblock A metrizable topology on the contracting boundary of a group.
\newblock {\em Transactions of the American Mathematical Society},
  372(3):1555--1600, 2019.

\bibitem[Cor17]{Cor16}
Matthew Cordes.
\newblock Morse boundaries of proper geodesic metric spaces.
\newblock {\em Groups Geom. Dyn.}, 11(4):1281--1306, 2017.

\bibitem[Cor19]{cordes2017survey}
Matthew Cordes.
\newblock A survey on {M}orse boundaries and stability.
\newblock In {\em Beyond hyperbolicity}, volume 454 of {\em London Math. Soc.
  Lecture Note Ser.}, pages 83--116. Cambridge Univ. Press, Cambridge, 2019.

\bibitem[CS14]{Charney_2014}
Ruth Charney and Harold Sultan.
\newblock Contracting boundaries of cat(0) spaces.
\newblock {\em Journal of Topology}, 8(1):93–117, 9 2014.

\bibitem[Dun85]{Dunwoody1985}
M.J. Dunwoody.
\newblock The accessibility of finitely presented groups.
\newblock {\em Inventiones mathematicae}, 81:449--458, 1985.

\bibitem[GH90]{ghys1990espaces}
{\'E}tienne Ghys and Pierre de~la Harpe.
\newblock Espaces m{\'e}triques hyperboliques.
\newblock In {\em Sur les groupes hyperboliques d’apr{\`e}s Mikhael Gromov},
  pages 27--45. Springer, 1990.

\bibitem[Liu21]{liu_dynamics}
Qing Liu.
\newblock Dynamics on the {M}orse boundary.
\newblock {\em Geom. Dedicata}, 214:1--20, 2021.

\bibitem[MS15]{martin2013hyperbolic}
Alexandre Martin and Jacek Swiatkowski.
\newblock Infinitely-ended hyperbolic groups with homeomorphic {G}romov
  boundaries.
\newblock {\em J. Group Theory}, 18(2):273--289, 2015.

\bibitem[Mur19]{Murray_2019}
Devin Murray.
\newblock Topology and dynamics of the contracting boundary of cocompact cat(0)
  spaces.
\newblock {\em Pacific Journal of Mathematics}, 299(1):89–116, Apr 2019.

\bibitem[PW02]{papasoglu2002quasi}
Panos Papasoglu and Kevin Whyte.
\newblock Quasi-isometries between groups with infinitely many ends.
\newblock {\em Commentarii Mathematici Helvetici}, 77(1):133--144, 2002.

\bibitem[QR22]{QR22}
Yulan Qing and Kasra Rafi.
\newblock Sublinearly morse boundary i: Cat (0) spaces.
\newblock {\em Advances in Mathematics}, 404:108442, 2022.

\bibitem[QRT20]{QR20}
Yulan Qing, Kasra Rafi, and Giulio Tiozzo.
\newblock Sublinearly morse boundary ii: Proper geodesic spaces.
\newblock {\em arXiv preprint arXiv:2011.03481}, 2020.

\bibitem[SS12]{serre2012trees}
J.~Stilwell and J.P. Serre.
\newblock {\em Trees}.
\newblock Springer Monographs in Mathematics. Springer Berlin Heidelberg, 2012.

\bibitem[Zal18]{Zalloum2018ASC}
Abdalrazzaq~R. Zalloum.
\newblock A symbolic coding of the morse boundary.
\newblock {\em arXiv: Group Theory}, 2018.

\end{thebibliography}
\end{document}